\newtheorem{theorem}{Theorem}[section]
\newtheorem{definition}[theorem]{Definition}
\newtheorem{proposition}[theorem]{Proposition}
\newtheorem{lemma}[theorem]{Lemma}
\newtheorem{remark}[theorem]{Remark}
\newtheorem{problem}[theorem]{Problem}
\newcommand{\Null}{\operatorname{null}}
\newcommand{\eps}{\varepsilon}
\newcommand{\Id}{\operatorname{Id}}
\newcommand{\Tr}{\operatorname{Tr}}
\newcommand{\Fix}{\operatorname{Fix }}
\newcommand{\Fq}{\ensuremath{\operatorname{\mathcal{F}_2^+}}}
\renewcommand{\vec}[1]{\mathbf{#1}}
\newcommand{\Mod}[1]{\,(\mathrm{mod}\ #1)}
\newcommand{\ostar}{\mathbin{\mathpalette\make@circled\star}}
\newcommand{\make@circled}[2]{%
	\ooalign{$\m@th#1\smallbigcirc{#1}$\cr\hidewidth$\m@th#1#2$\hidewidth\cr}%
}
\newcommand{\smallbigcirc}[1]{%
	\vcenter{\hbox{\scalebox{0.77778}{$\m@th#1\bigcirc$}}}%
}
\begin{document}

\begin{frontmatter}

%% Title, authors and addresses

%% use the tnoteref command within \title for footnotes;
%% use the tnotetext command for theassociated footnote;
%% use the fnref command within \author or \affiliation for footnotes;
%% use the fntext command for theassociated footnote;
%% use the corref command within \author for corresponding author footnotes;
%% use the cortext command for theassociated footnote;
%% use the ead command for the email address,
%% and the form \ead[url] for the home page:
%% \title{Title\tnoteref{label1}}
%% \tnotetext[label1]{}
%% \author{Name\corref{cor1}\fnref{label2}}
%% \ead{email address}
%% \ead[url]{home page}
%% \fntext[label2]{}
%% \cortext[cor1]{}
%% \affiliation{organization={},
%%             addressline={},
%%             city={},
%%             postcode={},
%%             state={},
%%             country={}}
%% \fntext[label3]{}

\title{Quaternion-Valued Wavelets on the Plane: A Construction via the Douglas--Rachford Approach} 

%\title{Nonseparable multiresolution quaternion-valued wavelets on the plane: A Douglas--Rachford approach} 

\author[label1]{Neil D. Dizon}
\author[label2]{Jeffrey A. Hogan}

\affiliation[label1]{organization={School of Mathematics and Statistics, University of New South Wales},%Department and Organization
            city={Sydney},
            postcode={2052}, 
            state={NSW},
            country={Australia}}
            
\affiliation[label2]{organization={School of Computing and Information Sciences, University of Newcastle},%Department and Organization
    city={Callaghan},
    postcode={2308}, 
    state={NSW},
    country={Australia}}                        

% \fntext[label3]{Corresponding author}
 
%% Abstract
\begin{abstract}
This paper presents a reformulation of the construction of nonseparable multiresolution quaternion-valued wavelets on the plane as a feasibility problem. The constraint sets in the feasibility problem are derived from the standard conditions of smoothness, compact support, and orthonormality. To solve the resulting feasibility problems, we employ a product space formulation of the Douglas–Rachford algorithm. This approach yields novel examples of nonseparable, multiresolution, compactly supported, smooth, and orthonormal quaternion-valued wavelets on the plane. Additionally, by introducing a symmetry-promoting constraint, we construct symmetric quaternion-valued scaling functions on the plane.
\end{abstract}

%%%Graphical abstract
%\begin{graphicalabstract}
%%\includegraphics{grabs}
%\end{graphicalabstract}

%%%Research highlights
%\begin{highlights}
%\item Research highlight 1
%\item Research highlight 2
%\end{highlights}

%% Keywords
\begin{keyword}
Wavelet \sep Quaternion \sep Douglas--Rachford \sep Multiresolution \sep Projection algorithm \sep Feasibility problem

\vspace{1em}

%% PACS codes here, in the form: \PACS code \sep code

%% MSC codes here, in the form: \MSC code \sep code
\MSC[2020] 42C40 \sep 42B99 \sep 65T60 \sep 47N10 \sep 65K10 

\end{keyword}

\end{frontmatter}

%% Add \usepackage{lineno} before \begin{document} and uncomment 
    %% following line to enable line numbers
    %% \linenumbers
    
    %% main text
    %%

\section{Introduction}\label{sec:mainintro}

\emph{Wavelets} have become an important mathematical tool with far-reaching applications ranging from signal processing through to the numerical solution of differential equations. Although the notion of wavelets appeared as a culmination of ideas in pure mathematics, engineering and physics, the suitability of wavelets in dealing with non-stationary signals served as a driving stimulus behind early wavelet research.  Key contributions to wavelet theory include Daubechies' construction of compactly supported, smooth and orthonormal wavelets on the line with multiresolution structure \cite{daubechies1,daubechiesbook,mallat, meyerchapter} which has had a profound impact on the practical use of wavelets in science and engineering.

Recently, an optimization approach for the construction of multidimensional wavelets has been developed \cite{FHTconference,FHTpaper,franklinthesis}. More precisely, wavelet construction was formulated as a \emph{feasibility problem} of finding a point on the intersection of constraint sets arising from the conditions of multiresolution analysis (MRA) and other design criteria. Feasibility problems are customarily solved using \emph{projection algorithms} including the \emph{method of alternating projections} (MAP) \cite{neumann}  and the \emph{Douglas--Rachford} (DR) scheme \cite{drachford}, among others. This approach has led to the development of new examples of nonseparable, smooth, compactly supported, orthonormal, complex-valued  (also real-valued) wavelets on the plane \cite{FHTpaper,franklinthesis}. When applied in one dimension, these methods can reproduce the Daubechies' wavelets,  but can also generate other examples. Additionally, this approach makes provision for expanding the design criteria to include other wavelet properties like \emph{cardinality} and \emph{symmetry} that are essential to certain image processing applications \cite{DHLi_centering,DDHTart,dizonthesis}.

In modern applications, higher-dimensional signals like color images are often vector-valued, requiring more advanced  transform methods. More precisely, a typical color image is viewed as a three-channel signal on the plane, where these channels are customarily the red, green and blue (RGB) components of the pixels. Alternative color image models include the luminance-chrominance (YUV) and
the cyan-magenta-yellow-key (CMYK) that use three and four channels, respectively. Most contemporary methods for handling color images analyze each channel independently, ignoring inter-channel correlations. However, it
is more advantageous to encode the pixel components using higher-dimensional algebras, e.g., the algebra of quaternions, in order to possibly capture inter-channel correlations in color images \cite{peicolour, sangwinecolour}.  Consequently, once a vector-valued signal is embedded into a higher-dimensional algebra, more sophisticated wavelet transforms become imperative.

While a plethora of research in the literature addresses quaternionic extensions of wavelet transforms, a rather limited number of scholarly works are devoted to the development of \textit{true} quaternionic wavelets.  In a comprehensive survey concerning the development of quaternionic wavelet transform (QWT) \cite{fletchersangwine}, it was noted that most of these extensions were only derived from real or complex wavelet filters, and are only separable wavelet transforms in disguise. Notable attempts to develop true QWTs include the matrix-valued transform approach in \cite{ginzbergwalden}, and the quaternionic wavelet theory presented in \cite{hoganmorris1,morristhesis}. More recently, quaternion-valued scaling filters on the line were introduced in \cite{fletcher} by expanding the framework established in \cite{ginzbergwalden}. In \cite{hoganmorris1,morristhesis}, fundamental analogues of classical wavelet theory were derived, including the basic results needed to construct orthonormal, compactly supported and smooth quaternion-valued wavelets on the plane based on quaternionic MRA. However, no such wavelets were explicitly constructed.

In this paper, we build on the quaternionic wavelet theory developed in \cite{hoganmorris1,morristhesis}. Drawing inspiration from the complex-valued higher-dimensional wavelet construction in \cite{FHTpaper}, we construct quaternion-valued wavelets via a feasibility approach. Our key contributions are as follow: 
\begin{enumerate}
\setlength{\itemsep}{0pt}
\setlength{\parskip}{0pt}
    \item[\textit{(i)}] We reformulate quaternionic wavelet construction as a feasibility problem. This involves expressing the quaternionic MRA conditions and other design criteria (including smoothness, compact support, orthonormality and symmetry) in terms of an appropriate quaternionic wavelet matrix.  The feasibility problem is derived from a suitable discretization of these conditions, structured into constraint sets that must be satisfied simultaneously. The discretized setup enables the application of numerical methods to solve the feasibility problem.
    \item[\textit{(ii)}]  We compute the necessary projection operators onto the constraint sets, and employ the Douglas--Rachford algorithm to solve the resulting feasibility problem. The solutions of the wavelet feasibility problem readily provides quaternionic scaling and wavelet filters. The scaling function and wavelets are plotted using a quaternionic cascade algorithm.
    \item[\textit{(iii)}] We provide explicit examples of nonseparable, multiresolution, compactly supported, smooth and orthonormal quaternion-valued wavelets on the plane obtained from the solutions of the quaternionic wavelet feasibility problems. To the best of our knowledge, these are the first examples of wavelets that meet all these specific criteria.
\end{enumerate}

The rest of the paper is organized as follows. In Section~\ref{sec:prelim} we fix notation and introduce relevant concepts for quaternion algebra, quaternionic Fourier transforms, and optimization theory. Section~\ref{sec:Qwaveletdesignconditions} contains a brief discussion of quaternionic MRA, scaling functions and wavelets. Here we revisit fundamental quaternionic analogues of classical wavelet theory. We also add a point symmetry condition into the design criteria for the scaling function. Section~\ref{sec:Qwaveletconstruction} details the key step of reformulating the wavelet design criteria in terms of a suitable quaternionic wavelet matrix, followed by discretization through uniform sampling. In Section~\ref{sec:Qwaveletfeasibility}, we list the constraint sets that comprise the quaternionic wavelet feasibility problem, deriving the respective projectors onto these constraint sets. Finally, in Section~\ref{sec:Qnumericalsolutions} we solve specific cases of the quaternionic wavelet feasibility problem using the Douglas--Rachford algorithm leading to explicit examples of nonseparable, multiresolution, compactly supported, smooth, and orthonormal quaternion-valued wavelets on the plane.

\section{Preliminaries}
\label{sec:prelim}

In this section, we fix notation and provide key concepts in quaternion algebra, quaternionic Fourier transforms, and optimization theory.

\subsection{Basic notation}\label{sec:basicnotation}

The set of complex numbers is denoted by $\mathbb{C}$. If $z\in \mathbb{C}$, then $\Re(z)$ is its \emph{real} part and $\Im(z)$ is its \emph{imaginary} part. We view $\mathbb{C}^d$ as a real inner product space with $\langle \cdot, \cdot\rangle_{\mathbb{R}} : \mathbb{C}^d \times \mathbb{C}^d \to \mathbb{R}$ defined by $
    \langle x,y\rangle_{\mathbb{R}} = \sum_{k=1}^d \big(\langle \Re(x_k),\Re(y_k) \rangle + \langle \Im(x_k),\Im(y_k) \rangle \big).
$

The collection of $a \times b$ matrices with complex entries is denoted by $\mathbb{C}^{a \times b}$, and the collection of $c \times c$ unitary matrices is $\mathcal{U}(c)$. Moreover, $I_d \in \mathbb{C}^{d\times d}$ is the identity matrix. The matrix $\mbox{diag}(v_1,v_2,\ldots,v_d)$ is $d\times d$ with diagonal entries $v_1,v_2,\ldots,v_d \in \mathbb{C}$. We denote by $\mathbb{C}^{a \times b} \otimes \mathbb{C}^{c\times d}$ the set of all block diagonal matrices in $\mathbb{C}^{(a+c)\times (b+d)}$  of the form
\[
\begin{bmatrix}
    M_1 & 0_{a\times d}\\
    0_{c\times b} & M_2
\end{bmatrix}\quad
\text{where}\quad
M_1 \in \mathbb{C}^{a\times b},\
M_2 \in \mathbb{C}^{c\times d}.
\]
If $A \in \mathbb{C}^{a \times b}$,  then $A_{ij}$ (or $A[i,j]$ interchangeably) denotes the $(i,j)$-entry of $A$. For matrices $A,B \in \mathbb{C}^{d \times d}$, $A^{\ast}$ is the Hermitian conjugate of $A$, and the {Frobenius inner product} of $A$ and $B$ is defined by $\langle A,B \rangle_F =\Tr(A^{\ast}B) = \sum_{i,k=1}^{d}\overline{A_{ik}}B_{ik}$. The Frobenius matrix norm is given by $\|A\|_F^2 = \sum_{i,k=1}^{d}|A_{ik}|^2 = \sum_{i,k=1}^{d}((\Re(A_{ik}))^2 + (\Im(A_{ik}))^2).$

In a Hilbert space ${\mathcal H}$, the closed ball $B(z,\delta )$ of radius $\delta >0$ centred at $z\in{\mathcal H}$ is
$$B(z,\delta )=\{x\in{\mathcal H}:\, \|x-z\|\leq \delta\}.$$

\subsection{Quaternion algebra}
Let $\mathbb{R}_2 := \{x = x_0+x_1e_{1} + x_2e_{2}+x_{12}e_{12} \,: \, x_0,x_1,x_2,x_{12} \in \mathbb{R}, \, e_1^2 =e_2^2 = e_{12}^2=e_1e_2e_{12}=-1\} $ be the set of \emph{quaternions}. Given $x = x_0 + x_1e_1 + x_2e_2 + x_{12}e_{12} \in \mathbb{R}_2$, we call $x_0$ the \emph{real} part of $x$, and $x_1,x_2,x_{12}$ the \emph{imaginary parts} of $x$. If $x_0=0$ then $x$ is a \emph{pure quaternion}. If $x_0=x_{12}=0$ then $x$ is a \emph{vector}. If $x_1=x_2=x_{12}=0$, then $x$ is a \emph{scalar}. We may also decompose $\mathbb{R}_2$ as $\mathbb{R}_2 =\Lambda_0 \oplus \Lambda_1 \oplus  \Lambda_2,$ where $\Lambda_0 :=\mathbb{R}$, $\Lambda_1 := \{x_1 e_1 + x_2e_2 : x_1,x_2 \in\mathbb{R}\}$, and $\Lambda_{2} := \{x_{12} e_{12} : x_{12}\in\mathbb{R}\}$. We write $[x]_k$ to refer to the $\Lambda_k$-part of $x = x_0 + x_1e_1 + x_2e_2 + x_{12}e_{12} \in \mathbb{R}_2$. Note that $\Lambda_0$ is the set of {scalars}, and $\Lambda_1$ is the collection of {vectors}.  In view of $\Lambda_1$, we consider $\mathbb{R}^2$ to be embedded in $\mathbb{R}_2$ as vectors.  If $ x,y \in \mathbb{R}_2$ are vectors, then $x^2 = -|x|^2  \text{~~and~~} xy = -\langle x,y \rangle + x \wedge y \in \Lambda_0 \oplus \Lambda_1,$ where  $\langle \cdot, \cdot \rangle$ is the usual inner product in $\mathbb{R}^2$ and $x \wedge y :=  e_{12}(x_1y_2 - x_2y_1)$ is the \emph{wedge product} of $x$ and $y$. We remark that a quaternion $x = x_0 + x_1e_1 + x_2e_2 + x_{12}e_{12}$ has a polar form representation given by 
\begin{equation}\label{eqn:polarquaternion}
    x = |x|e^{\mu_x \phi_x} = (\cos(\phi_x) + |x|\mu_x \sin(\phi_x))
\end{equation}
where 
\begin{equation*}
        |x| = \sqrt{x_0^2 + x_1^2 + x_2^2 +x_{12}^2}, \
        \mu_x  = \dfrac{x_1e_1 + x_2e_2 + x_{12}e_{12}}{\sqrt{x_1^2 + x_2^2 +x_{12}^2}},
\end{equation*}	
and the angle $\phi_x \in [0,\pi]$ is defined by
\[
\phi_x :=
\begin{cases}
    \tan^{-1}\!\left(\dfrac{\sqrt{x_1^2 + x_2^2 + x_{12}^2}}{x_0}\right),
    & x_0  \neq 0, \\
     \frac{\pi}{2}, & x_0 = 0.
\end{cases}
\]
When $x_1=x_2=x_3=0$, the direction $\mu_x$ is undefined; and in this case, we simply set $\mu_x=0$,  and the polar representation reduces to $x=|x|e^{0}$ (since $\phi_x=0$).

We further define $x_s:=x_0 + x_{12}e_{12}$ and $x_v = x_1e_1 + x_2e_2$ as the \emph{spinor} and \emph{vector parts} of $x$, respectively. The spinor and vector parts of any $x,y \in \mathbb{R}_2$ satisfy the following commutation relations \cite{hoganmorris1,morristhesis}: (a) $x_sy_s = y_sx_s$, (b) $x_sy_v = y_v\overline{x_s}$, and (c) $x_vy_v = -e_1y_vx_ve_1 = -e_2y_vx_ve_2$.
Moreover, for a function $f: X \rightarrow \mathbb{R}_2$ where $X \subseteq \mathbb{R}^2$, we write $f(x) = f_0(x) + f_1(x)e_1 + f_2(x) + f_{12}(x)$ where $f_0,f_1,f_2,f_{12}: X \to \mathbb{R}$. The spinor part $f_s$ of $f$ is defined as $f_s(x):= (f(x))_s = f_0(x) +f_{12}(x)e_{12}$ while its vector part $f_v$ is given by $f_v(x):=(f(x))_v = f_1(x)e_1 + f_2(x)e_2$. We also define the \emph{spinor-vector matrix of a function} $f:\mathbb{R}^2 \to \mathbb{R}_2$ by
\begin{equation*}
    [ f(x) ] := \begin{bmatrix}
        f_s(x) & f_v(x)\\
        f_v(-x) & f_s(-x)
    \end{bmatrix}.
\end{equation*}
In general, a matrix $A \in \mathbb{R}_2^{2\times 2}$ (not necessarily attached to a quaternion-valued function) is a \emph{spinor-vector matrix} if it is of the form $$A=\begin{bmatrix}s_1&v_1\\v_2&s_2\end{bmatrix} \mbox{ where } s_1,s_2 \in \Lambda_0 \oplus \Lambda_{2} \mbox{ and } v_1,v_2 \in \Lambda_1.$$
Henceforth, we let $\mathsf{S}$ to be the set of all spinor-vector matrices.  Furthermore, if $Z:=\{x=[x_s \quad  x_v ]^{\top} : x_s \in \Lambda_0 \oplus \Lambda_{2}, \, x_v \in \Lambda_1\}$, then $A \in \mathsf{S}$ determines a linear transformation on $Z$. In fact, $Z$ is a right inner product module with inner product $\langle x,y \rangle = \overline{x_s}y_s + \overline{x_v}y_v$ satisfying $\overline{\langle x,y \rangle} = \langle y,x \rangle$, $\langle x,ya \rangle = \langle x,y \rangle a$ and $\langle xa,y \rangle =\overline{a}\langle x,y \rangle$ for all $x,y\in Z$, $a\in \Lambda_0 \oplus \Lambda_{2}$. Moreover, if $A\in\mathsf{S}$ and $A^{\ast} = \overline{A}^{\top}$ is the \emph{Hermitian adjoint} of $A$, then $\langle Ax,y \rangle = \langle x,A^{\ast} y \rangle$. We say that $A$ is \emph{self-adjoint} whenever $A = A^{\ast}$, i.e., $s_1,s_2\in \mathbb{R}$ and $v_2=-v_1$. We also define an \textit{eigenvector} $x$ of $A$ to be a nonzero vector $x=\begin{bmatrix}x_s &  x_v \end{bmatrix}^{\top} \in Z$ such that $Ax=x\lambda$, where $\lambda \in \Lambda_0 \oplus \Lambda_{2}$ is the \textit{eigenvalue}. The two eigenvalues of $A$ are computed using the formula (see, e.g., \cite[Section~1.5]{morristhesis}) given by
\begin{equation}\label{eqn:SVeigenvalue}
    \lambda = \frac{s_1 + \overline{s_2} \pm \sqrt{(s_1-\overline{s_2})^2 + 4v_1v_2}}{2}.
\end{equation}
In addition, if $A$ is self-adjoint then its eigenvalues are real. We also say that $A$ is \emph{positive definite} if $\langle Ax,x\rangle >0$ for all nonzero $x\in Z$. Furthermore, $A$ is positive definite if and only if all of its eigenvalues are positive. Given two self-adjoint $A,B \in \mathsf{S}$, we say $A<B$ whenever $\langle Ax,x \rangle < \langle Bx,x \rangle$ for every $x\in Z$. For a comprehensive discussion on the linear algebra of spinor-vector matrices, refer to \cite[Section~1.2]{hoganmorris1} or \cite[Section~1.5]{morristhesis}.

We remark that $\Lambda_0 \oplus \Lambda_2$ is a subalgebra of $\mathbb{R}_2$. It is straightforward to check that $\Lambda_0 \oplus \Lambda_2 \cong \mathbb{C}$  by defining the mapping $\vartheta: \Lambda_0 \oplus \Lambda_2 \to \mathbb{C}$ where $\vartheta(x_0 + x_{12}e_{12}) = x_0 + x_{12}i$. Henceforth, we use $\mathbb{C}$ and $\Lambda_0 \oplus \Lambda_2$ interchangeably and apply theorems valid for $\mathbb{C}^d$ to $(\Lambda_0 \oplus \Lambda_2)^d$.

Finally, we denote the collection of $m\times m$ spinor-vector block matrices by 
\begin{equation*}\mathsf{S}^{m\times m}:=\left\{\begin{bmatrix}A_{11}&\cdots&A_{1m}\\\vdots&\ddots&\vdots\\A_{m1}&\cdots&A_{mm} \end{bmatrix} \ : \ A_{ij} \in \mathsf{S} \mbox{ for } i,j\in\{1,2,\ldots,m\}\right\},
\end{equation*}
and by $\mathcal{U}^{m\times m}$ the set of unitary $m\times m$ spinor-vector block matrices.

\subsection{Quaternionic Fourier transform}
Throughout the paper, we employ the quaternionic Fourier transform introduced by Brackx, De Schepper and Sommen \cite{brackx1,brackx2}, which is defined as follows.
\begin{definition} 
    Let $f \in L^1(\mathbb{R}^2, \mathbb{R}_2)$. The quaternionic Fourier transform $\hat f=\Fq f$ of $f$ is given by
    \begin{equation*}
        \hat f(\xi )=\Fq f(\xi) :=\hat{f}(\xi):=\int_{\mathbb{R}^2} e^{2\pi \xi \wedge x} f(x) \, dx.
    \end{equation*}
\end{definition}
In this paper, we use $\Fq f$ or $\hat{f}$ interchangeably to denote the Fourier transform of $f$. 

\begin{proposition}
    Let $f,g \in L^2(\mathbb{R}^2, \mathbb{R}_2)$. The quaternionic Fourier transform satisfies the following properties:
    \begin{enumerate}[label=(\alph*)]
        \itemsep0em
        \item $\langle f,g \rangle = \langle \Fq{f}, \Fq g\rangle$, 
        \item $\|f\|_2 = \|\Fq f\|_2$,
        \item $\Fq (f(\cdot-a))(\xi) = e^{2\pi \xi \wedge a}\Fq f(\xi)$ ,
        \item $\Fq (e^{2\pi \cdot \wedge a}f)(\xi) = (\Fq f)(\xi - a)$, 
        \item $\displaystyle \Fq(f(b\cdot))(\xi) = \frac{1}{b^2}\Fq f\left(\frac{\xi}{b} \right)$,
        \item $(\Fq)^2 = \Id $, 
    \end{enumerate} where $a\in\mathbb{R}^2$ and $b>0$. Furthermore, if $f \ast g$ is the convolution of $f$ and $g$, then  
    \begin{equation}\label{eq:quat_convolution} 
        [ \Fq(f \ast g)(\xi) ] = [ \Fq f(\xi) ] [ \Fq g(\xi) ]
    \end{equation}
    which is also referred to as the \emph{convolution theorem} for the quaternionic Fourier transform. 
\end{proposition}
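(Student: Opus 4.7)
The plan is to verify each identity by direct computation from the definition of $\Fq$, exploiting two structural facts: the commutativity of the subalgebra $\Lambda_0\oplus\Lambda_2\cong\mathbb{C}$ (to which every exponential $e^{2\pi\xi\wedge a}$ belongs, since $\xi\wedge a\in\Lambda_2$), and the spinor--vector commutation relations stated earlier.

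For (c)--(e), I would substitute in the defining integral. The key observation is that because $\Lambda_0\oplus\Lambda_2$ is commutative, one has $e^{2\pi\xi\wedge(x+a)}=e^{2\pi\xi\wedge a}\,e^{2\pi\xi\wedge x}$, and the first factor is independent of the integration variable and can be pulled out on the left. Property (c) then follows from the shift $x\mapsto x+a$; property (d) is identical after recognising the resulting kernel as $e^{2\pi(\xi-a)\wedge x}$; property (e) follows from the dilation $y=bx$ with Jacobian $b^2$. For (f), the antisymmetry $x\wedge\xi=-\xi\wedge x$ shows that a second application of $\Fq$ produces the kernel $e^{-2\pi\xi\wedge x}$ against $\hat f$, which by the quaternionic Fourier inversion theorem (established on the Schwartz class and extended by density to $L^2$) recovers $f$. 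Properties (a) and (b) then follow in the standard way: (b) is Plancherel, obtained from (f) and a quaternionic analogue of the duality formula; (a) follows from (b) by polarisation, with careful attention to the fact that the natural inner product on $L^2(\mathbb{R}^2,\mathbb{R}_2)$ must be real-valued to accommodate the non-commutativity.

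The main obstacle is the convolution theorem \eqref{eq:quat_convolution}. After expanding $\widehat{f*g}(\xi)=\iint e^{2\pi\xi\wedge(y+u)}f(y)g(u)\,dy\,du$ and splitting the exponential, one is left with the factor $e^{2\pi\xi\wedge u}\in\Lambda_0\oplus\Lambda_2$ sitting between $f(y)$ and $g(u)$, and because $f_v(y)\in\Lambda_1$ does \emph{not} commute with $\Lambda_0\oplus\Lambda_2$, one cannot simply attach the exponential to $g(u)$ on the right. This is exactly what forces the SV-matrix formulation. Using the commutation rules $x_s y_s=y_s x_s$ and $x_s y_v=y_v\overline{x_s}$, I would decompose $f(y)=f_s(y)+f_v(y)$ and $g(u)=g_s(u)+g_v(u)$, move $e^{2\pi\xi\wedge u}$ past the vector part of $f$ at the cost of a conjugation, and then reassemble the four resulting spinor--spinor, spinor--vector, vector--spinor and vector--vector integrals. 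The sign flip produced by the conjugation step on $\Lambda_0\oplus\Lambda_2$ amounts to $\xi\mapsto-\xi$, which is precisely what generates the entries $\hat f(-\xi)$ and $\hat g(-\xi)$ appearing in the off-diagonal positions of $[\hat f(\xi)]$ and $[\hat g(\xi)]$. Matching the four contributions to the four entries of the matrix product $[\hat f(\xi)][\hat g(\xi)]$ is the crux, and is precisely the book-keeping that the SV-matrix formalism was designed to automate.
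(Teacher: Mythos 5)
The paper does not actually prove this proposition: it is imported wholesale from Hogan--Morris, with the convolution identity attributed to \cite[Theorem~2.16]{hoganmorris2} and the rest deferred to \cite{hoganmorris1,morristhesis}. So there is no in-paper argument to match against, and what you have written is a reconstruction of the cited proofs rather than a divergence from them. Your outline is sound and isolates exactly the two non-classical mechanisms: that $e^{2\pi\xi\wedge x}$ lives in the commutative subalgebra $\Lambda_0\oplus\Lambda_2$ and sits to the \emph{left} of $f$ in the defining integral (which makes (c)--(e) and, via the antisymmetry $x\wedge\xi=-\xi\wedge x$, also (f) go through essentially as in the classical case), and that in the convolution identity the spinor factor $e^{2\pi\xi\wedge u}$ must be commuted past $f(y)$, picking up a conjugation against $f_v(y)$ by the rule $x_s y_v = y_v\overline{x_s}$, which is precisely the source of the $\hat f(-\xi)$, $\hat g(-\xi)$ entries in the SV matrices. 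One small positional slip: after writing $e^{2\pi\xi\wedge(y+u)}=e^{2\pi\xi\wedge y}e^{2\pi\xi\wedge u}$ the factor $e^{2\pi\xi\wedge u}$ starts to the \emph{left} of $f(y)$, not between $f(y)$ and $g(u)$; the conjugation arises in migrating it rightward past $f_v(y)$, which is what you then describe, so the substance is unaffected. For (a) and (b) the cleaner route is the multiplication formula $\int\overline{\hat f}\hat g=\iint\overline{f(x)}\,\bigl(\int e^{2\pi\xi\wedge(y-x)}d\xi\bigr)g(y)\,dx\,dy=\int\overline{f}g$ (legitimate because the $\xi$-integral is a real scalar), established on a dense class and extended to $L^2$; polarisation from (b) alone recovers only the real part of the quaternion-valued inner product unless you also polarise against $g e_1$, $g e_2$, $g e_{12}$, a point worth making explicit if you write this out in full.
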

The convolution theorem in \eqref{eq:quat_convolution} is proved in \cite[Theorem~2.16]{hoganmorris2} and is understood in terms of spinor-vector matrices. It is also worth noting that the convolution of two functions is defined similarly as in the classical setting with a caveat that, in general, $f \ast g \neq g \ast f$. For a more elaborate discussion on the quaternionic Fourier transform, refer to \cite{hoganmorris1,morristhesis}.

We note in the next proposition that the spinor-vector matrix of the integral kernel of the quaternionic Fourier transform commutes with the spinor-vector matrix of any quaternion-valued function on the plane.

\begin{proposition}\label{prop:kernelcommutes}
    If $f: \mathbb{R}^2 \to \mathbb{R}_2$, then for any $x,\xi \in \mathbb{R}^2$ we have
    $$[e^{2\pi \xi \wedge x}][f(x)] = [f(x)][e^{2\pi \xi \wedge x}].$$
\end{proposition}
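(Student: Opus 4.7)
The plan is to exploit the fact that the kernel $e^{2\pi \xi\wedge x}$ lives entirely in $\Lambda_0\oplus\Lambda_2$, so its SV matrix is diagonal; then the required identity reduces to four entrywise commutation statements, each handled by one of the spinor/vector commutation relations (a)–(c) recalled in the preliminaries.

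First I would observe that, since $\xi\wedge x = e_{12}(\xi_1 x_2 - \xi_2 x_1)\in\Lambda_2$, the exponential $g(x):=e^{2\pi\xi\wedge x}$ is a spinor-valued function: writing $g(x)=\cos(2\pi(\xi_1 x_2-\xi_2 x_1))+e_{12}\sin(2\pi(\xi_1 x_2-\xi_2 x_1))$, we have $g_v\equiv 0$ and $g_s(-x)=\overline{g_s(x)}$. Consequently,
\begin{equation*}
[e^{2\pi \xi \wedge x}] \;=\; \begin{bmatrix} g_s(x) & 0 \\ 0 & \overline{g_s(x)} \end{bmatrix}.
\end{equation*}

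Next I would multiply out both sides directly. The product $[e^{2\pi\xi\wedge x}][f(x)]$ yields a matrix whose entries are $g_s f_s$, $g_s f_v$, $\overline{g_s}f_v(-\cdot)$, $\overline{g_s}f_s(-\cdot)$, whereas $[f(x)][e^{2\pi\xi\wedge x}]$ produces $f_s g_s$, $f_v\overline{g_s}$, $f_v(-\cdot)g_s$, $f_s(-\cdot)\overline{g_s}$ (where all arguments of $f$, $f_s$, $f_v$, $g_s$ are $x$ unless indicated). Equating the two matrices entry by entry, the diagonal entries reduce to $g_s f_s = f_s g_s$ and $\overline{g_s}f_s(-x)=f_s(-x)\overline{g_s}$, which are instances of relation (a) since all factors lie in $\Lambda_0\oplus\Lambda_2$. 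The $(1,2)$ entry $g_s f_v = f_v\overline{g_s}$ is exactly relation (b), and the $(2,1)$ entry $\overline{g_s}f_v(-x)=f_v(-x)g_s$ is relation (b) applied to the spinor $\overline{g_s}$ (whose conjugate is $g_s$).

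The argument is essentially a verification, so there is no real obstacle — the only point that needs a moment of care is the asymmetry between the $(1,1)$ and $(2,2)$ blocks: the bottom-right entry of $[g]$ must be $\overline{g_s(x)}$ rather than $g_s(-x)$ evaluated naively, but these coincide precisely because the kernel is purely spinor with $g_s(-x)=\overline{g_s(x)}$. Once this is noted, the commutation is immediate and the proof is complete.
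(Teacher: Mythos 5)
Your verification is correct: the kernel $e^{2\pi\xi\wedge x}$ is indeed $\Lambda_0\oplus\Lambda_2$-valued with $g_s(-x)=\overline{g_s(x)}$, so its SV matrix is the diagonal matrix $\mathrm{diag}(g_s,\overline{g_s})$, and the four entrywise identities you list follow exactly from the commutation relations (a) and (b). The paper itself offers no argument here, deferring entirely to Lemma~1.19 of Morris's thesis, so your self-contained entrywise check is a complete and legitimate proof of the statement.
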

\begin{proof}
    See, e.g., \cite[Lemma~1.19]{morristhesis}.
\end{proof}

\subsection{Feasibility problems and the Douglas--Rachford algorithm}\label{sec:optimization}
Since our construction approach relies heavily on optimization, we also revisit relevant concepts in optimization concerning feasibility problems and how to solve them.

A \emph{feasibility problem} is the task of finding a point in the intersection of a finite family of sets. Formally, given sets $K_1, K_2, \dots, K_r$ contained in a Hilbert space $\mathcal{H}$, the corresponding feasibility problem is 
\begin{equation*}
    \text{find~} x^\ast \in K :=\bigcap_{j=1}^r K_j. 
\end{equation*}
In the literature, \emph{projection algorithms} are often used to solve feasibility problems. The {Douglas--Rachford} (DR) algorithm \cite{drachford} is a well-known example of a projection algorithm that solves a two-set feasibility problem. It has been observed to perform remarkably well in certain classes of nonconvex problems \cite{aacampoy,bsims,dtam}. 

 If $C$ is a nonempty subset of $\mathcal{H}$, the \emph{projector} onto $C$ is the set-valued operator $P_{C}\colon \mathcal{H} \rightrightarrows C$ defined by
\begin{equation*}
    P_{C}(x) = \{ c \in C: \|x-c\| = \inf_{z\in C}\|x-z\| \};
\end{equation*}
and the \emph{reflector}  with respect to $C$ is the set-valued operator $R_{C}\colon \mathcal{H} \rightrightarrows \mathcal{H}$ defined by
\begin{equation*}
    R_{C} := 2P_{C} - \Id.
\end{equation*}
An element of $P_{C}(x)$ is called a \emph{projection} of $x$ onto $C$. Similarly, an element of $R_C(x)$ is called a \emph{reflection} of $x$ with respect to $C$. Note that the use of ``$\rightrightarrows$'' is to emphasize that an operator is set-valued. For examples of projectors that form part of more complicated projectors used in this paper, refer to \ref{app1}.

We now introduce the DR algorithm.

\begin{definition}
    Given two nonempty subsets $A$ and $B$ of $\mathcal{H}$, the \emph{DR operator} $T_{A,B}$ is defined as
    \begin{equation*}
        T_{A,B} := \frac{\Id+R_BR_A}{2}.
    \end{equation*}
\end{definition}
It is worth noting (see \cite[Equations~(20)--(23)]{bdao}) that if $P_A$ is single-valued, then 
\begin{equation*}
    T_{A,B} = \Id - P_A + P_BR_A \text{~~and~~} P_A(\Fix T_{A,B}) = A\cap B,
\end{equation*}
where $\Fix T_{A,B}:=\{x\in\mathcal{H} : x \in T_{A,B}(x)\}$ is the set of of fixed points of the operator $T_{A,B}$. If $A$ and $B$ are closed convex subsets of $\mathcal{H}$ with $A\cap B \neq \varnothing$, then, for any $x_0 \in \mathcal{H}$, the sequence $(x_n)_{n\in \mathbb{N}}$ generated by $x_{n+1} = T_{A,B}(x_n)$ converges weakly to a point $x^\ast \in \Fix T_{A,B}$, and the \emph{shadow sequence} $(P_A(x_n))_{n\in \mathbb{N}}$ converges weakly to $P_A(x^\ast) \in A\cap B$ \cite{lions,svaiter}.

Although originally formulated for two-set feasibility problems, the DR algorithm has been adapted to solve many-set feasibility problems by employing Pierra's {product space reformulation} \cite{pierra} (among other extensions). To be more precise, given $K_1, K_2, \dots, K_r \subseteq \mathcal{H}$, with corresponding projectors $P_{K_1}, P_{K_2}, \dots, P_{K_r}$, the sets $C$ and $D$ in the product Hilbert space $\mathcal{H}^r$ are defined by
\begin{subequations}
    \begin{align} 
        C &:= K_1 \times K_2 \times \dots \times K_r \text{ and} \label{productC}\\
        D &:= \{(x_1, x_2, \dots, x_r) \in \mathcal{H}^r: x_1 = x_2 = \cdots = x_r\}. \label{productD}
    \end{align}
\end{subequations}
The $r$-set feasibility problem is equivalent to the two-set feasibility problem on $C$ and $D$ in the sense that 
\begin{equation}\label{productspaceequivalence}
    x^{\ast} \in \bigcap_{j=1}^r K_j  \ \iff \vec{x}^{\ast}:=(x^{\ast},x^{\ast},\dots,x^{\ast}) \in C \cap D.
\end{equation}
Furthermore, the projectors onto $C$ and $D$ are given by
\begin{align*}
    P_{C}(\vec{x}) =  P_{K_1}(x_1) \times P_{K_2}(x_2) \times \dots \times P_{K_r}(x_r) \text{ and }\, 
    P_{D}(\vec{x}) = \left(\frac{1}{r}\sum_{j=1}^r x_j, \frac{1}{r}\sum_{j=1}^r x_j, \dots, \frac{1}{r}\sum_{j=1}^r x_j\right)
\end{align*}
for any $\vec{x}=(x_1,x_2,\dots, x_r) \in \mathcal{H}^r$; see, e.g., \cite[Proposition~29.3 and Proposition~26.4(iii)]{bcombettes}.  The pseudocode for implementing the DR is outlined in \ref{app2}.

\section{Quaternionic wavelets on the plane}\label{sec:Qwaveletdesignconditions}

In this section, we recall important results in \cite{hoganmorris1,morristhesis} that are required to construct orthonormal quaternion-valued wavelets on the plane with compact support and prescribed regularity based on quaternionic multiresolution analysis.

\subsection{MRA, scaling function and wavelets}

\begin{definition}\label{def:QMRA}
    A multiresolution analysis for $L^2(\mathbb{R}^2,\mathbb{R}_2)$ is a collection of closed right linear $\mathbb{R}_2$-submodules $(V_j)_{j \in \mathbb{Z}}$ of $L^2(\mathbb{R}^2,\mathbb{R}_2)$ and a function $\varphi \in V_0$ such that the following conditions hold:
    \begin{enumerate}[label=(\alph*)]
        \itemsep0em
        \item $V_{j}\subset V_{j+1}$  for all $j \in \mathbb{Z}$,\label{def:QMRAcond1}
        \item $ \overline{\bigcup_{j \in \mathbb{Z}} V_j} = L^2(\mathbb{R}^2,\mathbb{R}_2)$ and $\, \bigcap_{j \in \mathbb{Z}}V_j = \{0\}$,\label{def:QMRAcond2}
        \item $f(\cdot) \in V_0$ if and only if $f(\cdot - k) \in V_0$ for all $k \in \mathbb{Z}^2$,\label{def:QMRAcond3}
        \item $f(\cdot) \in V_0$ if and only if $f(2^j\cdot) \in V_{j}$ for all $j \in \mathbb{Z}$, and\label{def:QMRAcond4}
        \item $\{\varphi(\cdot-k)\}_{k \in \mathbb{Z}^2}$ forms an orthonormal basis for $V_0$.\label{def:QMRAcond5}
    \end{enumerate}
\end{definition}

The function $\varphi \in V_0$ in Definition~\ref{def:QMRA} is called the \emph{scaling function}. Combining conditions \ref{def:QMRAcond4}--\ref{def:QMRAcond5} of Definition~\ref{def:QMRA}, we see that $\varphi(\frac{x}{2}) \in V_{-1} \subset V_0$ and there exists $(a_k^0)_{k\in\mathbb{Z}^2} \in \ell^2(\mathbb{Z}^2,\mathbb{R}_2)$ such that
\begin{equation}\label{eqn:Qscaling}
    \frac{1}{4}\varphi\left(\frac{x}{2}\right) = \sum_{k\in \mathbb{Z}^2} \varphi(x-k) a_k^0.
\end{equation}
We refer to \eqref{eqn:Qscaling} as the \emph{ scaling} or \emph{dilation equation}. In the Fourier domain, this is best viewed in terms of the spinor-vector matrix.

\begin{proposition}\label{prop:Qfourierscaling}
    Let $\varphi \in L^2(\mathbb{R}^2,\mathbb{R}_2)$  be a scaling function as in Definition~\ref{def:QMRA}. Then \eqref{eqn:Qscaling} is equivalent to
    \begin{equation}\label{eqn:Qfourierscaling}
        [\hat{\varphi}(2\xi)]  = [\hat{\varphi}(\xi)]  [m_0(\xi)] 
    \end{equation}
    where $m_0(\xi) = \sum_{k\in\mathbb{Z}^2} e^{2\pi \xi \wedge k}a_k^0$.
\end{proposition}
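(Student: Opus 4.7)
The plan is to take the quaternionic Fourier transform of both sides of the scaling equation \eqref{eqn:Qscaling}, package the result in terms of spinor--vector matrices, and then read off \eqref{eqn:Qfourierscaling}. The only subtlety relative to the complex-valued case is that quaternion multiplication is non-commutative, so the coefficients $a_k^0$ cannot be pulled past $\hat{\varphi}(\xi)$ directly; this is precisely why the identity must be expressed in SV-matrix form, where the convolution theorem \eqref{eq:quat_convolution} provides the right algebraic framework.

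First I would handle the left-hand side of \eqref{eqn:Qscaling}. By the dilation property with $b=1/2$, one has $\Fq\bigl(\varphi(\cdot/2)\bigr)(\xi) = 4\,\hat{\varphi}(2\xi)$, so the factor $1/4$ on the left of \eqref{eqn:Qscaling} yields exactly $\hat{\varphi}(2\xi)$ after transforming. Passing to SV-matrices, the left-hand side becomes $[\hat{\varphi}(2\xi)]$.

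Next I would recognise the right-hand side as a convolution. Writing $\mu := \sum_{k\in\mathbb{Z}^2} a_k^0\,\delta_k$ for the discrete quaternion-valued measure supported on $\mathbb{Z}^2$, one has $\sum_{k} \varphi(x-k)\,a_k^0 = (\varphi \ast \mu)(x)$. A direct computation of the quaternionic Fourier transform of $\mu$ gives
\begin{equation*}
    \hat{\mu}(\xi) = \sum_{k\in\mathbb{Z}^2} e^{2\pi\xi\wedge k}\,a_k^0 = m_0(\xi),
\end{equation*}
where the termwise transform on each Dirac mass $\delta_k$ follows by the translation property applied to the constant function $1$ (or, equivalently, by inspection of the definition of $\Fq$). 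Applying the convolution theorem \eqref{eq:quat_convolution} then produces $[\Fq(\varphi\ast\mu)(\xi)] = [\hat{\varphi}(\xi)]\,[m_0(\xi)]$.

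Equating the two SV-matrix expressions yields \eqref{eqn:Qfourierscaling}, and reversing every step (using $(\Fq)^2 = \Id$ together with the fact that an SV-matrix identity $[F(\xi)] = [G(\xi)]$ is equivalent to $F(\xi) = G(\xi)$) establishes the converse. The only real obstacle is ensuring that taking SV-matrices commutes with the infinite sum and the convolution, which is justified by the $\ell^2$-summability of $(a_k^0)_{k\in\mathbb{Z}^2}$ together with Proposition~\ref{prop:kernelcommutes}, guaranteeing that the termwise manipulations are well defined and that the SV-matrix multiplication on the right of \eqref{eqn:Qfourierscaling} faithfully encodes the quaternionic product in the original equation.
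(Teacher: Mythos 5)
The paper gives no proof of this proposition itself---it simply cites \cite[Lemma~2]{hoganmorris1} and \cite[Lemma~4.1]{morristhesis}---so there is nothing in the text to diverge from; your argument is a correct reconstruction along the standard lines (termwise quaternionic Fourier transform of \eqref{eqn:Qscaling} via the dilation and translation properties, then packaging into SV matrices, where the convolution theorem \eqref{eq:quat_convolution} supplies exactly the ordering $[\hat{\varphi}(\xi)][m_0(\xi)]$ that the non-commutativity of the $a_k^0$ would otherwise obstruct). The only point I would make explicit is the identity $[\hat{\varphi}(\xi)\,a]=[\hat{\varphi}(\xi)][a]$ for a \emph{constant} quaternion $a$ (it fails for two general functions, but holds here because $a_v(-\xi)=a_v(\xi)$), which is the precise sense in which, as you say, the SV-matrix product faithfully encodes the quaternionic product.
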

\begin{proof}
    See \cite[Lemma~2]{hoganmorris1} or \cite[Lemma~4.1]{morristhesis}.
\end{proof}

We call $m_0$ the \emph{scaling filter}. Repeated application of  \eqref{eqn:Qfourierscaling} yields
\begin{equation*}%\label{eqn:Qscalingproduct}
    [ \hat{\varphi}(\xi)]  =  \left[\varphi\left({\frac{\xi}{2^J}}\right)\right]\left[m_0\left({\frac{\xi}{2^J}}\right)\right]\left[m_0\left({\frac{\xi}{2^{J-1}}}\right)\right]\cdots \left[m_0\left({\frac{\xi}{2}}\right)\right],
\end{equation*}
and the infinite product $[ \hat{\varphi}(\xi)]  = \prod_{j=1}^{\infty} \left[  m_0\left(\frac{\xi}{2^j}\right)\right] $ gives a way to recover $\varphi$ from $m_0$ whenever the product converges.

We further note that condition~\ref{def:QMRAcond4} in Definition~\ref{def:QMRA} may be generalized to $f(\cdot) \in V_0 \Longleftrightarrow f(M^j\cdot) \in V_{j+1}$ for all $j \in \mathbb{Z}$, where $M$ is a $2\times 2$ dilation matrix, i.e., $M$ has integer entries and its eigenvalues are greater than one in absolute value. For any dilation matrix $M$, there are $\det(M)$ cosets of $M\mathbb{Z}^2$ in $\mathbb{Z}^2$ \cite[Proposition~2.1.1]{krivoshein}. For our purpose, $M=2I_2$ and $\det(M)-1=3$. Thus, we have three other cosets of $2\mathbb{Z}^2$ in $\mathbb{Z}^2$ and each of them corresponds to a \emph{wavelet function} (or simply \emph{wavelet}) $\psi^{\eps}$ ($\eps \in \{1,2,3\}$) such that
$$\left\{2^{j} \psi^{\eps}(2^j\cdot  -k)\ : \ j\in \mathbb{Z},\ k \in \mathbb{Z}^2,\ \eps \in \{1,2,3\} \right\}$$
forms an orthonormal basis for $L^2(\mathbb{R}^2,\mathbb{R}_2)$.

Since each $\psi^{\eps}(\frac{x}{2}) \in V_{-1}\subset V_0$, for each $\eps \in \{1,2,3\}$, there exists $(a_k^{\eps})_{k \in \mathbb{Z}^2} \in \ell^2(\mathbb{Z}^2,\mathbb{R}_2)$ such that 
\begin{equation}\label{eqn:Qwaveletscaling}
    \frac{1}{4}\psi^{\eps}\left(\frac{x}{2}\right) = \sum_{k\in \mathbb{Z}^2} \varphi(x-k) a_k^{\eps}.
\end{equation}
Taking the quaternionic Fourier transform of both sides of \eqref{eqn:Qwaveletscaling} yields an equivalent expression in terms of spinor-vector matrices.

\begin{proposition}\label{prop:Qfourierwaveletscaling}
    Let $\varphi \in L^2(\mathbb{R}^2,\mathbb{R}_2)$  be a scaling function as in Definition~\ref{def:QMRA} and $\psi^{\eps}$ be the associated wavelets ($\eps \in \{1,2,3\}$). Then \eqref{eqn:Qwaveletscaling} is equivalent to
    \begin{equation}\label{eqn:Qfourierwaveletscaling}
        [ \hat{\psi}^{\eps}(2\xi)]  = [ \hat{\varphi}(\xi)]  [  m_{\eps}(\xi)] 
    \end{equation}
    where $m_{\eps}(\xi) = \sum_{k\in\mathbb{Z}^2} e^{2\pi \xi\wedge k}a_k^{\eps}$, for each $\eps \in \{1,2,3\}$.
\end{proposition}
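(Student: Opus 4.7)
The plan is to mirror the proof of Proposition~\ref{prop:Qfourierscaling}, since \eqref{eqn:Qwaveletscaling} differs from \eqref{eqn:Qscaling} only by replacing $\tfrac{1}{4}\varphi(\cdot/2)$ with $\tfrac{1}{4}\psi^{\eps}(\cdot/2)$ and the filter sequence $(a_k^0)$ with $(a_k^{\eps})$. The first move would be to take the quaternionic Fourier transform of both sides of \eqref{eqn:Qwaveletscaling}; however, because quaternion multiplication is non-commutative and each $a_k^{\eps}$ acts on $\varphi(x-k)$ from the right, I would pass immediately to spinor-vector matrices so that the relevant algebra becomes matrix-multiplicative.

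Concretely, I would apply the SV-matrix operation $[\,\cdot\,]$ to both sides of \eqref{eqn:Qwaveletscaling}. Using linearity of $[\,\cdot\,]$ and the multiplicative property that $[\varphi(x-k)\,a_k^{\eps}] = [\varphi(x-k)]\,[a_k^{\eps}]$ (this is precisely the kind of factorisation that underlies the convolution theorem \eqref{eq:quat_convolution} and is the reason SV matrices were introduced), one obtains
\begin{equation*}
	\bigl[\tfrac{1}{4}\psi^{\eps}(x/2)\bigr] \;=\; \sum_{k\in\mathbb{Z}^2}[\varphi(x-k)]\,[a_k^{\eps}].
\end{equation*}
Next I would take the quaternionic Fourier transform entry-wise. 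Property~(e) of the quaternionic Fourier transform with $b = 1/2$ turns the left-hand side into $[\hat{\psi}^{\eps}(2\xi)]$, while property~(c) turns the translation inside the sum into a modulation, so each summand on the right becomes $[\,e^{2\pi \xi \wedge k}\hat{\varphi}(\xi)\,]\,[a_k^{\eps}]$.

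To finish, I would invoke Proposition~\ref{prop:kernelcommutes} to commute the kernel SV matrix $[e^{2\pi \xi \wedge k}]$ past $[\hat{\varphi}(\xi)]$, producing $[\hat{\varphi}(\xi)]\,[e^{2\pi \xi \wedge k}]\,[a_k^{\eps}] = [\hat{\varphi}(\xi)]\,[e^{2\pi \xi \wedge k}\,a_k^{\eps}]$. Summing over $k \in \mathbb{Z}^2$ and recognising $m_{\eps}(\xi) = \sum_{k}e^{2\pi \xi \wedge k}\,a_k^{\eps}$ yields $[\hat{\psi}^{\eps}(2\xi)] = [\hat{\varphi}(\xi)]\,[m_{\eps}(\xi)]$, which is \eqref{eqn:Qfourierwaveletscaling}. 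The reverse implication follows by applying the inverse quaternionic Fourier transform entry-wise, which is justified by property~(f).

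The main obstacle is justifying the two non-trivial manipulations with SV matrices, namely the factorisation $[\varphi(x-k)\,a_k^{\eps}] = [\varphi(x-k)]\,[a_k^{\eps}]$ and the commutation of the exponential kernel past $[\hat{\varphi}(\xi)]$. Both are packaged into the convolution theorem \eqref{eq:quat_convolution} and Proposition~\ref{prop:kernelcommutes}, respectively; once these are invoked, the argument is structurally identical to that of Proposition~\ref{prop:Qfourierscaling}. In practice the cleanest write-up is to cite \cite[Lemma~2]{hoganmorris1} or \cite[Lemma~4.1]{morristhesis} and observe that the same derivation applies verbatim with $\varphi$ and $a_k^0$ replaced by $\psi^{\eps}$ and $a_k^{\eps}$.
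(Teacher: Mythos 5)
Your proposal is correct and takes essentially the same route as the paper, whose proof of this proposition consists of the citation to \cite[Lemma~2]{hoganmorris1} and \cite[Lemma~4.1]{morristhesis} that you yourself identify as the cleanest write-up; the SV-matrix computation you sketch (the factorisation $[\varphi(x-k)a_k^{\eps}]=[\varphi(x-k)][a_k^{\eps}]$, the dilation and translation properties of $\Fq$, and Proposition~\ref{prop:kernelcommutes} to commute the kernel past $[\hat{\varphi}(\xi)]$) is exactly the argument underlying that citation and the proof of Proposition~\ref{prop:Qfourierscaling}. No gaps.
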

\begin{proof}
    See \cite[Lemma~2]{hoganmorris1} or \cite[Lemma~4.1]{morristhesis}.
\end{proof}

\subsection{Quaternionic wavelet design conditions}
In some instances, we write $\psi^0 = \varphi$ for convenience in collectively referring to the scaling function and its associated wavelets in the \textit{wavelet ensemble} $\{\psi^{\eps}\}_{\eps=0}^{3}$. We also adopt the notation
\begin{equation*}
    \psi_{jk}^{\eps}(x):=2^{j} \psi^{\eps}(2^jx-k)	
\end{equation*}
to refer to the normalized versions of the translated and dilated scaling function and wavelets. With this notation, $\{\varphi_{jk}\}_{k\in\mathbb{Z}^2}$ and $\{\psi_{jk}^{\eps} \, : \, \eps \in \{1,2,3\}\}_{k\in\mathbb{Z}^2}$ are orthonormal bases for $V_j$ and $W_j$, respectively.

\subsubsection*{Orthogonality}\label{ssec:Qorthogonality}

We first revisit the necessary and sufficient conditions for the orthonormality of the shifts of the scaling function and the wavelets. Henceforth, we let 
\begin{equation}\label{eqn:defV2}
    V^2 := \{v_0:=(0,0), v_1:=(1,0), v_2:=(0,1), v_3:=(1,1)\}
\end{equation}	
be the vertices of the unit square in $\mathbb{R}^2$.

\begin{proposition}\label{prop:QQMFscaling}
    Let  $\varphi$ be a scaling function that generates an MRA for $L^2(\mathbb{R}^2,\mathbb{R}_2)$ where $\{\varphi_{0k}\}_{k\in\mathbb{Z}^2}$ forms an orthonormal basis for $V_0$. If $m_0$ is the scaling filter associated with $\varphi$, then 
    \begin{equation}\label{eqn:QQMFscaling}
        \sum_{j=0}^{3} \left[  m_0\left(\xi + {\textstyle \frac{v_j}{2}}\right)\right] ^{\ast}\left[  m_0\left(\xi + {\textstyle \frac{v_j}{2}}\right)\right] = I_2
    \end{equation} 
    for almost every $\xi \in \mathbb{R}^2$.
\end{proposition}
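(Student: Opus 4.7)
The plan is to derive the QMF-type identity by combining the dilation equation (Proposition~\ref{prop:Qfourierscaling}) with a Parseval-type identity encoding the orthonormality of the integer translates of $\varphi$. More precisely, the starting point is the fact that the orthonormality of $\{\varphi(\cdot-k)\}_{k\in\mathbb{Z}^2}$ in $L^2(\mathbb{R}^2,\mathbb{R}_2)$ is equivalent, via Plancherel and a periodisation argument, to the SV-matrix identity
\begin{equation*}
	\sum_{k\in\mathbb{Z}^2}[\hat{\varphi}(\xi+k)]^{\ast}[\hat{\varphi}(\xi+k)] = I_2 \qquad \text{a.e.\ }\xi\in\mathbb{R}^2.
\end{equation*}
I would cite this as the module-valued analogue of Plancherel's theorem (available in \cite{hoganmorris1,morristhesis}) rather than redo its derivation.

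Next, I would evaluate this identity at $2\xi$ in place of $\xi$ and split the sum over $\mathbb{Z}^2$ using the four cosets of $2\mathbb{Z}^2$ with representatives $v_0,v_1,v_2,v_3$ from \eqref{eqn:defV2}: writing $k=2\ell+v_j$ with $\ell\in\mathbb{Z}^2$ and $j\in\{0,1,2,3\}$ gives
\begin{equation*}
	I_2 = \sum_{j=0}^{3}\sum_{\ell\in\mathbb{Z}^2}[\hat{\varphi}(2\xi+2\ell+v_j)]^{\ast}[\hat{\varphi}(2\xi+2\ell+v_j)].
\end{equation*}
Applying \eqref{eqn:Qfourierscaling} with $\xi$ replaced by $\xi+\ell+\tfrac{v_j}{2}$ (and taking Hermitian adjoints, using $(AB)^{\ast}=B^{\ast}A^{\ast}$, which is valid for SV matrices) rewrites each summand as
\begin{equation*}
	[m_0(\xi+\ell+{\textstyle\frac{v_j}{2}})]^{\ast}[\hat{\varphi}(\xi+\ell+{\textstyle\frac{v_j}{2}})]^{\ast}[\hat{\varphi}(\xi+\ell+{\textstyle\frac{v_j}{2}})][m_0(\xi+\ell+{\textstyle\frac{v_j}{2}})].
\end{equation*}

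The third step is to use that $m_0$ is $\mathbb{Z}^2$-periodic. This requires a short remark rather than a calculation: for $\ell\in\mathbb{Z}^2$ the extra factor $e^{2\pi\ell\wedge k}$ arising in the Fourier series for $m_0(\xi+\ell)$ lies in $\Lambda_0\oplus\Lambda_2$ (so the exponential law $e^{a+b}=e^ae^b$ is legitimate there) and reduces to $1$ since $\ell\wedge k\in\mathbb{Z}\, e_{12}$. Hence $[m_0(\xi+\ell+\tfrac{v_j}{2})]=[m_0(\xi+\tfrac{v_j}{2})]$, allowing the $m_0$ factors to be pulled outside the $\ell$-sum. What remains is
\begin{equation*}
	I_2 = \sum_{j=0}^{3}[m_0(\xi+{\textstyle\frac{v_j}{2}})]^{\ast}\Biggl(\sum_{\ell\in\mathbb{Z}^2}[\hat{\varphi}(\xi+{\textstyle\frac{v_j}{2}}+\ell)]^{\ast}[\hat{\varphi}(\xi+{\textstyle\frac{v_j}{2}}+\ell)]\Biggr)[m_0(\xi+{\textstyle\frac{v_j}{2}})],
\end{equation*}
and the inner bracket equals $I_2$ by the Parseval identity applied at $\xi+\tfrac{v_j}{2}$, yielding \eqref{eqn:QQMFscaling}.

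The main obstacle I expect is bookkeeping under non-commutativity: the SV-matrix formalism behaves like ordinary complex matrices for products and adjoints, but one has to resist the temptation to commute $[m_0]$ past $[\hat{\varphi}]^{\ast}[\hat{\varphi}]$, and the ``absorption'' of the $\ell$-shift must be justified on both the $\hat{\varphi}$-side (by reindexing) and the $m_0$-side (by genuine $\mathbb{Z}^2$-periodicity from integrality of $\ell\wedge k$). Aside from these careful-but-routine points, the proof is essentially the quaternionic transcription of the classical scalar argument.
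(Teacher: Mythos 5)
Your argument is correct and is essentially the standard periodisation proof: the paper itself gives no proof but defers to Hogan--Morris \cite[Theorem~4]{hoganmorris1}, where the identity is established by exactly this route (orthonormality $\Leftrightarrow$ $\sum_{k}[\hat{\varphi}(\xi+k)]^{\ast}[\hat{\varphi}(\xi+k)]=I_2$, coset splitting of $\mathbb{Z}^2$ over $2\mathbb{Z}^2$, and the SV-matrix form of the dilation equation). Your care about non-commutativity and about the $\mathbb{Z}^2$-periodicity of $m_0$ via the integrality of $\ell\wedge k$ in $\Lambda_2$ is exactly the right bookkeeping.
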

\begin{proof}
    See \cite[Theorem~4]{hoganmorris1} or \cite[Theorem~4.3]{morristhesis}.
\end{proof}

A similar result is provided for the wavelet functions.

\begin{proposition}\label{prop:QQMFscalingwavelet}
    Let $\varphi$ be a scaling function associated with an MRA for $L^2(\mathbb{R}^2,\mathbb{R}_2)$ and $\{\psi^{\eps}\}_{\eps=1}^{3}$ be the collection of wavelets corresponding to $\varphi$. Suppose further that $m_0$ is the scaling filter and $\{m_{\eps}\}_{\eps=1}^{3}$ are the wavelet filters. The set $\{\psi^{\eps}_{0k} \, : \, \eps\in \{1,2,3\}\}_{k \in \mathbb{Z}^2}$ is an orthonormal basis for $W_0$ if and only if 
    for each $\eps,\zeta \in \{1,2,3\}$ we have
    \begin{align}
        \sum_{j=0}^{3} \left[  m_{\eps}\left(\xi + {\textstyle \frac{v_j}{2}}\right)\right] ^{\ast} \left[  m_{0}\left(\xi + {\textstyle \frac{v_j}{2}}\right)\right]  &= [0]  \label{eqn:QQMFscalingwavelet}\\
        \mbox{ and }\quad \sum_{j=0}^{3} \left[  m_{\eps}\left(\xi + {\textstyle \frac{v_j}{2}}\right)\right] ^* \left[  m_{\zeta}\left(\xi + {\textstyle \frac{v_j}{2}}\right)\right]  &= \delta_{\eps-\zeta}I_2 .\label{eqn:QQMFwavelets}
    \end{align}
\end{proposition}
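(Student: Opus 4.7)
The plan is to mirror the proof of Proposition~\ref{prop:QQMFscaling}, applying the same Fourier-periodisation argument to both the wavelet--wavelet and wavelet--scaling inner products. The key observation is that $\{\psi^{\eps}_{0k}\}_{\eps\in\{1,2,3\},\,k\in\mathbb{Z}^2}$ is an orthonormal basis for $W_0$ if and only if (i) $\langle \psi^{\eps}(\cdot-k),\psi^{\zeta}(\cdot-l)\rangle = \delta_{\eps-\zeta}\delta_{k-l}$ for all $\eps,\zeta\in\{1,2,3\}$ and $k,l\in\mathbb{Z}^2$, and (ii) $\langle \psi^{\eps}(\cdot-k),\varphi(\cdot-l)\rangle = 0$ for all $\eps\in\{1,2,3\}$ and $k,l\in\mathbb{Z}^2$, the latter encoding $W_0\perp V_0$ within the MRA splitting $V_1 = V_0 \oplus W_0$.

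First, I would translate each requirement into the Fourier domain via Plancherel (and the standard periodisation trick), arriving in spinor--vector form at
\begin{equation*}
\sum_{n\in\mathbb{Z}^2}[\hat{\psi}^{\eps}(\xi+n)]^{\ast}[\hat{\psi}^{\zeta}(\xi+n)] = \delta_{\eps-\zeta}I_2,
\qquad
\sum_{n\in\mathbb{Z}^2}[\hat{\psi}^{\eps}(\xi+n)]^{\ast}[\hat{\varphi}(\xi+n)] = [0],
\end{equation*}
almost everywhere in $\xi$. Next, I would substitute the refinement identities from Propositions~\ref{prop:Qfourierscaling} and~\ref{prop:Qfourierwaveletscaling}, namely $[\hat{\psi}^{\eps}(\xi)] = [\hat{\varphi}(\xi/2)][m_{\eps}(\xi/2)]$ for $\eps\in\{0,1,2,3\}$ (with the convention $\psi^0 = \varphi$), and partition $\mathbb{Z}^2$ into the four cosets of $2\mathbb{Z}^2$ indexed by the vertices $V^2$ in \eqref{eqn:defV2}. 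Writing $n = 2k + v_j$ and exploiting the $\mathbb{Z}^2$-periodicity of each $m_{\eps}$, the filter brackets $[m_{\eps}(\xi/2+v_j/2)]^{\ast}$ and $[m_{\zeta}(\xi/2+v_j/2)]$ pull outside the sum over $k$.

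The remaining inner sum $\sum_{k\in\mathbb{Z}^2}[\hat{\varphi}(\xi/2+v_j/2+k)]^{\ast}[\hat{\varphi}(\xi/2+v_j/2+k)]$ equals $I_2$ almost everywhere, since this is exactly the orthonormality periodisation identity for $\{\varphi(\cdot-k)\}_{k\in\mathbb{Z}^2}$ underlying Proposition~\ref{prop:QQMFscaling}. Renaming $\eta = \xi/2$ then produces \eqref{eqn:QQMFscalingwavelet} and \eqref{eqn:QQMFwavelets}. Since each manipulation is a chain of equivalences (the filter pull-out uses only periodicity, and the MRA structure forces $W_0 = V_1 \ominus V_0$), the argument is reversible and supplies the converse direction as well.

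The main obstacle is controlling the noncommutativity of $\mathbb{R}_2$ throughout these computations, particularly where products such as $\hat{\varphi}(\xi)\,m_{\eps}(\xi)$ appear on the right-hand side of the refinement equation. This is precisely what the SV-matrix formalism is built to handle: Proposition~\ref{prop:kernelcommutes} guarantees that the Fourier-kernel brackets commute past any SV matrix, and the convolution theorem~\eqref{eq:quat_convolution} ensures that $[\hat{\varphi}(\xi/2)][m_{\eps}(\xi/2)]$ behaves as an honest $2\times 2$ matrix product. Once the algebra is cast in SV-matrix form, the computation reduces to the same periodisation bookkeeping used in Proposition~\ref{prop:QQMFscaling}, applied in parallel to the pairs $(\eps,\zeta)$ and $(\eps,0)$.
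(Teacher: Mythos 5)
The paper offers no proof of its own here; it simply cites \cite[Theorem~5]{hoganmorris1} and \cite[Theorem~4.5]{morristhesis}. Your periodisation argument is the standard route to this kind of statement and the computations you outline go through: Plancherel plus periodisation turns the shift relations into $\sum_{n\in\mathbb{Z}^2}[\hat{\psi}^{\eps}(\xi+n)]^{\ast}[\hat{\psi}^{\zeta}(\xi+n)]=\delta_{\eps-\zeta}I_2$ and the analogous cross condition with $\hat{\varphi}$; substituting $[\hat{\psi}^{\eps}(\xi)]=[\hat{\varphi}(\xi/2)][m_{\eps}(\xi/2)]$, splitting $\mathbb{Z}^2$ into the cosets $2\mathbb{Z}^2+v_j$, pulling the $\mathbb{Z}^2$-periodic filter brackets $[m_{\eps}]^{\ast}$ and $[m_{\zeta}]$ out of the inner sum (on the left and right respectively --- order matters here, and the SV formalism keeps it honest), and collapsing the inner sum to $I_2$ by the orthonormality of $\{\varphi(\cdot-k)\}_{k\in\mathbb{Z}^2}$ produces \eqref{eqn:QQMFscalingwavelet}--\eqref{eqn:QQMFwavelets}. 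Those steps are indeed reversible.

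There is, however, one genuine gap. What your chain of equivalences characterises is ``$\{\psi^{\eps}_{0k}\}$ is an orthonormal system in $V_1$ orthogonal to $V_0$,'' not ``$\{\psi^{\eps}_{0k}\}$ is an orthonormal \emph{basis} for $W_0=V_1\ominus V_0$.'' For the backward implication you must also show the system spans $W_0$, and your sketch never argues completeness; declaring $W_0=V_1\ominus V_0$ is a definition, not a proof that your orthonormal system exhausts it. The standard repair is available from the machinery already in the paper: conditions \eqref{eqn:QQMFscalingwavelet}--\eqref{eqn:QQMFwavelets}, together with \eqref{eqn:QQMFscaling} (which holds by hypothesis via Proposition~\ref{prop:QQMFscaling}), say exactly that the four block columns of the square wavelet matrix $U(\xi)$ of Definition~\ref{def:Qwaveletmatrix} are orthonormal for almost every $\xi$; a square matrix with orthonormal columns is unitary, hence has orthonormal rows, and the row relations are what allow an arbitrary element of $V_1$ (e.g.\ the basis element $2\varphi(2\cdot-k)$) to be expanded in the combined system $\{\varphi_{0k}\}\cup\{\psi^{\eps}_{0k}\,:\,\eps\in\{1,2,3\}\}$, giving the spanning of $W_0$. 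With that paragraph added, your proof is complete; everything else, including the appeal to Proposition~\ref{prop:kernelcommutes} and the convolution theorem \eqref{eq:quat_convolution} to control noncommutativity, is sound.
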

\begin{proof}
    See \cite[Theorem~5]{hoganmorris1} or \cite[Theorem~4.5]{morristhesis}.
\end{proof}

Altogether, \eqref{eqn:QQMFscaling}--\eqref{eqn:QQMFwavelets} are the \emph{quaternionic quadrature mirror filter} (QQMF) \emph{conditions}. Given the orthonormality of $\{\varphi_{0k}\}_{k\in\mathbb{Z}^2}$, \eqref{eqn:QQMFscalingwavelet} and  \eqref{eqn:QQMFwavelets} become equivalent to the orthonormality of the shifts of the wavelet functions.

We now define the \textit{wavelet matrix}, which when forced to be unitary almost everywhere will encapsulate the QQMF conditions. 

\begin{definition}\label{def:Qwaveletmatrix}
    Let $\varphi$ be a scaling function associated with an MRA for $L^2(\mathbb{R}^2,\mathbb{R}_2)$, $\{\psi^{\eps}\}_{\eps=1}^{3}$ be the collection of wavelets corresponding to $\varphi$, and $\{v_j\}_{j=0}^3$ be defined as in \eqref{eqn:defV2}. Suppose further that $m_0$ is the scaling filter and $\{m_{\eps}\}_{\eps=1}^{3}$ are the wavelet filters. We call the $\mathbb{Z}^2$-periodic matrix-valued function $U:\mathbb{R}^2 \to \mathbb{R}_2^{8 \times 8}$ the \emph{wavelet matrix} whose entry-wise function value is given by
    \begin{equation*}
        U(\xi) = \begin{bmatrix}
            [m_0(\xi) ]&[ m_1(\xi)] & [m_2(\xi)] & [m_3(\xi)]\\
            [m_0(\xi+{\textstyle \frac{v_1}{2}})] & [m_1(\xi+{\textstyle \frac{v_1}{2}})] & [m_2(\xi+{\textstyle \frac{v_1}{2}})] & [m_3(\xi+{\textstyle \frac{v_1}{2}})] \\
            [m_0(\xi+{\textstyle \frac{v_2}{2}})] & [ m_1(\xi+{\textstyle \frac{v_2}{2}})] & [m_2(\xi+{\textstyle \frac{v_2}{2}})] &[m_3(\xi+{\textstyle \frac{v_2}{2}})] \\
            [m_0(\xi+{\textstyle \frac{v_3}{2}})] & [m_1(\xi+{\textstyle \frac{v_3}{2}})] & [m_2(\xi+{\textstyle \frac{v_3}{2}})] &[m_3(\xi+{\textstyle \frac{v_3}{2}})]\\
        \end{bmatrix}.
    \end{equation*}
\end{definition}
In the next section, we take a closer look at $U(\xi)$ and investigate its properties. With the wavelet matrix $U(\xi)$, the QQMF conditions in \eqref{eqn:QQMFscaling}--\eqref{eqn:QQMFwavelets} are together equivalent to $U(\xi)$ being unitary at almost every $\xi \in \mathbb{R}^2$. 

It is important to note that \eqref{eqn:QQMFscaling} is necessary but not sufficient for the shifts of $\varphi$ to form an orthonormal system.
Fortunately, an extension of Cohen's sufficient orthonormality condition \cite{cohenpaper} for quaternion-valued wavelets is also available.

\begin{proposition}\label{prop:Qorthonormalitycheck}
    Let $m_0(\xi) = \sum_{k\in\mathbb{Z}^2} e^{2\pi \xi \wedge k}a_k^0$ satisfy the QMF condition \eqref{eqn:QQMFscaling} where only finitely many of $(a_k^0)_{k\in\mathbb{Z}^2}$ are nonzero and $m_0(0)=1$. Let $[  m_0(\xi)] ^{\ast}[  m_0(\xi)]  > cI_2$ for all $\xi \in [-\frac{1}{4},\frac{1}{4}]^2$ for some $c \in (0,1)$, and
    \begin{equation*}
        [  \hat{\varphi}(\xi)]  = \lim_{n \to \infty}\left[  m_0\left(\frac{\xi}{2^n}\right) \right] \left[  m_0\left(\frac{\xi}{2^{n-1}}\right) \right] \cdots \left[  m_0\left(\frac{\xi}{2}\right) \right].
    \end{equation*}
    Then $\{\varphi(\cdot - k)\}_{k \in \mathbb{Z}^2}$ is an orthonormal set in $L^2(\mathbb{R}^2,\mathbb{R}_2)$.	  
\end{proposition}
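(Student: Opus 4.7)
The plan is to carry out the standard Mallat sufficiency argument within the SV-matrix framework of Hogan and Morris. My first step is to reformulate the conclusion: using Plancherel for $\Fq$, the shifts $\{\varphi(\cdot-k)\}_{k\in\mathbb{Z}^2}$ form an orthonormal system in $L^2(\mathbb{R}^2,\mathbb{R}_2)$ if and only if the SV-matrix Poisson-type identity
\[
\sum_{k\in\mathbb{Z}^2}[\hat\varphi(\xi+k)]^*[\hat\varphi(\xi+k)] = I_2 \quad\text{for a.e. }\xi\in\mathbb{R}^2
\]
holds, a consequence of unfolding shift inner products over the unit cell and invoking Proposition~\ref{prop:kernelcommutes} to commute the Fourier kernel through $[\hat\varphi]$. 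It therefore suffices to derive this identity from the hypotheses on $m_0$.

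Next I define finite approximants by
\[
[\hat\eta_n(\xi)] := \left[m_0\!\left({\textstyle\frac{\xi}{2}}\right)\right]\left[m_0\!\left({\textstyle\frac{\xi}{4}}\right)\right]\cdots\left[m_0\!\left({\textstyle\frac{\xi}{2^n}}\right)\right]\chi_{[-2^{n-1},2^{n-1}]^2}(\xi)\,I_2,
\]
reading the product in the order dictated by the scaling identity $[\hat\varphi(\xi)] = [\hat\varphi(\xi/2)][m_0(\xi/2)]$. The base case $\eta_0$ is $I_2$ times the indicator of the unit square, whose integer shifts are orthonormal. For the inductive step, I split $\sum_{k\in\mathbb{Z}^2}[\hat\eta_{n+1}(\xi+k)]^*[\hat\eta_{n+1}(\xi+k)]$ along the four cosets of $2\mathbb{Z}^2$ indexed by $V^2$. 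Using the $\mathbb{Z}^2$-periodicity of $m_0$, the factor $[m_0(\xi/2+v_j/2)]$ is independent of the inner summation index and thus pulls out of the inner sum on both sides, at which point the inductive hypothesis collapses the inner sum to $I_2$ and the QQMF identity \eqref{eqn:QQMFscaling} (applied at $\xi/2$) delivers $I_2$ overall. The non-commutativity of $\mathbb{R}_2$ does not obstruct this calculation precisely because the pulled-out factor does not depend on $l\in\mathbb{Z}^2$.

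The final step is to pass to the limit. Since $m_0(0)=1$ gives $[m_0(0)]=I_2$, the matrix-valued infinite product converges pointwise on compact sets, so $[\hat\eta_n(\xi)]\to[\hat\varphi(\xi)]$ pointwise a.e.\ on $\mathbb{R}^2$. The lower bound $[m_0(\xi)]^*[m_0(\xi)]>cI_2$ on $[-\tfrac14,\tfrac14]^2$ then supplies the uniform non-degeneracy needed to prevent mass escape of the (positive semi-definite) integrand $[\hat\eta_n(\cdot+k)]^*[\hat\eta_n(\cdot+k)]$. Combining a Fatou-type lower bound with conservation of total mass (Parseval for each $\eta_n$, whose shifts have been shown to be orthonormal) forces the Poisson identity to pass to the limit, yielding the desired identity for $\hat\varphi$.

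The main obstacle is this passage to the limit. Unlike in the scalar case, the integrand is a positive semi-definite SV-matrix whose ``size'' in each quaternionic direction must be controlled simultaneously; the lower bound on $[m_0]^*[m_0]$ near the origin is exactly what rules out direction-dependent degeneracies in the tail of the product, and it must be propagated up the dyadic scales via the matrix QMF relation. The inductive step is, by comparison, a clean translation of the scalar argument into SV-matrix language.
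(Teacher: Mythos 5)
The paper itself does not prove this proposition --- it defers to Morris's thesis --- and your overall strategy (Mallat's sufficiency argument transplanted into the SV-matrix calculus, with the QQMF identity driving an induction on truncated products and the lower bound on $[m_0]^*[m_0]$ controlling the limit) is the standard and correct route. However, two things in your write-up do not work as stated. First, your approximants are defined with the factors in the wrong order. Iterating $[\hat\varphi(\xi)]=[\hat\varphi(\xi/2)][m_0(\xi/2)]$ gives $[\hat\varphi(\xi)]=[\hat\varphi(\xi/2^n)]\,[m_0(\xi/2^n)]\cdots[m_0(\xi/2)]$, with $[m_0(\xi/2)]$ rightmost, exactly as in the displayed infinite product of the proposition; the recursion you need is therefore $[\hat\eta_{n+1}(\xi)]=[\hat\eta_n(\xi/2)]\,[m_0(\xi/2)]$. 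With your definition $[m_0(\xi/2)][m_0(\xi/4)]\cdots[m_0(\xi/2^n)]$ the recursion becomes $[\hat\eta_{n+1}(\xi)]=[m_0(\xi/2)][\hat\eta_n(\xi/2)]$, so that $[\hat\eta_{n+1}]^*[\hat\eta_{n+1}]=[\hat\eta_n(\xi/2)]^*[m_0(\xi/2)]^*[m_0(\xi/2)][\hat\eta_n(\xi/2)]$: the $[m_0]$ factors are sandwiched in the middle, they cannot be ``pulled out of the inner sum on both sides'', and the inductive step collapses --- precisely because $\mathsf{S}$ is noncommutative. The step you describe in words is the correct one for the other ordering, so this is fixable, but as written the definition and the induction are inconsistent.

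Second, and more substantively, the passage to the limit --- which you rightly identify as the main obstacle --- is not closed by the mechanism you propose. ``A Fatou-type lower bound combined with conservation of total mass'' is not enough: Fatou only yields $\sum_k[\hat\varphi(\xi+k)]^*[\hat\varphi(\xi+k)]\le I_2$, and $\|\eta_n\|_2=1$ for every $n$ also holds in the classical counterexamples (e.g.\ the stretched Haar filter) where the limit fails to have orthonormal shifts because mass genuinely escapes. The argument that actually works is a domination argument: from $[m_0(\zeta)]^*[m_0(\zeta)]>cI_2$ on $[-\frac14,\frac14]^2$, $[m_0(0)]=I_2$ and the convergence of the infinite product, one first shows $[\hat\varphi(\zeta)]^*[\hat\varphi(\zeta)]\ge C'I_2$ for all $\zeta\in[-\frac12,\frac12]^2$ with $C'>0$; hence $[\hat\varphi(\xi/2^n)]$ is invertible with uniformly bounded inverse on the support of the cutoff, and writing $[\hat\eta_n(\xi)]=[\hat\varphi(\xi/2^n)]^{-1}[\hat\varphi(\xi)]$ there gives the Loewner bound $[\hat\eta_n(\xi)]^*[\hat\eta_n(\xi)]\le (C')^{-1}[\hat\varphi(\xi)]^*[\hat\varphi(\xi)]$ uniformly in $n$. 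Dominated convergence then gives $\eta_n\to\varphi$ in $L^2(\mathbb{R}^2,\mathbb{R}_2)$, and the orthonormality relations $\langle\eta_n,\eta_n(\cdot-k)\rangle=\delta_k$ pass to the limit. Until this chain is in place, your proof establishes only the inequality $\le I_2$, not the identity you need.
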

\begin{proof}
    Refer to \cite[Theorem~5.7]{morristhesis}.
\end{proof}
For all cases that we  consider, $m_0$ is trigonometric polynomial and hence continuous. Under such an assumption, the condition $[  m_0(\xi)] ^{\ast}[  m_0(\xi)]  > cI_2$ for all $\xi \in [-\frac{1}{4},\frac{1}{4}]^2$ (where $0 <c<1$) simply demands the matrix $[m_0(\xi)] ^{\ast}[m_0(\xi)]$ to be positive definite for all $\xi \in [-\frac{1}{4},\frac{1}{4}]^2$. Thus, if this positive definiteness condition is met by $m_0$, while $m_{\eps}$ satisfies \eqref{eqn:QQMFscalingwavelet} and \eqref{eqn:QQMFwavelets} for $\eps \in \{1,2,3\}$, then the desired orthonormalities for $\{\varphi_{0k}\}_{k\in \mathbb{Z}^2}$ and $\{\psi_{0k}^{\eps} \, : \, \eps \in \{1,2,3\}\}_{k \in \mathbb{Z}^2}$ are guaranteed.

\subsubsection*{Completeness}

Given a scaling function $\varphi$ whose integer shifts form an orthonormal basis for $V_0$, one can readily generate the other subspaces $V_j$ so that $\bigcap_{j \in \mathbb{Z}}V_j = \{0\}$. However, to make sure that $\overline{\bigcup_{j \in \mathbb{Z}}V_j} = L^2(\mathbb{R}^2,\mathbb{R}_2)$, we further require $|\hat{\varphi}(0)| = 1$. For simplicity, we set $\hat{\varphi}(0) = 1$.  

\begin{proposition}\label{prop:Qcompleteness}
    Let $\varphi$ be a scaling function associated with an MRA for $L^2(\mathbb{R}^2,\mathbb{R}_2)$, and let $\{\psi^{\eps}\}_{\eps=1}^{3}$ be the collection of wavelets corresponding to $\varphi$. Suppose further that $m_0$ is the scaling filter and $\{m_{\eps}\}_{\eps=1}^{3}$ are the wavelet filters. If $\hat{\varphi}(0) = 1$, then 
    \begin{equation}\label{eqn:Qcompleteness}
        m_0({\textstyle\frac{v_j}{2}}) = \delta_{j} \ \text{ and } \ m_{\eps}(0) = 0.
    \end{equation}
    for all $j\in \{0,1,2,3\}$ and for all $\eps \in \{1,2,3\}$.
\end{proposition}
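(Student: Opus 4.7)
The plan is to extract all the information from two places: the dilation equation of Proposition~\ref{prop:Qfourierscaling} evaluated at $\xi = 0$, and the QQMF conditions of Propositions~\ref{prop:QQMFscaling} and~\ref{prop:QQMFscalingwavelet} evaluated at $\xi = 0$. The assumption $\hat{\varphi}(0)=1$ means $\hat{\varphi}(0)$ is a scalar, so its spinor part equals $1$ and its vector part vanishes, giving $[\hat{\varphi}(0)] = I_2$.

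First I would substitute $\xi = 0$ into the identity $[\hat{\varphi}(2\xi)] = [\hat{\varphi}(\xi)][m_0(\xi)]$ from Proposition~\ref{prop:Qfourierscaling}. Since $[\hat{\varphi}(0)] = I_2$, this collapses immediately to $[m_0(0)] = I_2$, which is the spinor-vector matrix of the scalar $1$. Hence $m_0(0) = m_0(v_0/2) = 1 = \delta_0$, establishing the $j=0$ case.

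Next I would evaluate the QQMF condition \eqref{eqn:QQMFscaling} at $\xi = 0$ to obtain
\begin{equation*}
[m_0(0)]^{\ast}[m_0(0)] + \sum_{j=1}^{3} \left[m_0\left(\tfrac{v_j}{2}\right)\right]^{\ast}\left[m_0\left(\tfrac{v_j}{2}\right)\right] = I_2.
\end{equation*}
The first term equals $I_2$ by the previous step, so the remaining sum must equal the zero matrix. Each summand is of the form $A^{\ast}A$ and is therefore positive semi-definite in $\mathsf{S}$ (using the self-adjointness and non-negativity properties recalled in Section~1.2). A finite sum of positive semi-definite SV-matrices equals zero only when each summand is zero, so $[m_0(v_j/2)]^{\ast}[m_0(v_j/2)] = 0$, which forces $[m_0(v_j/2)] = 0$ and hence $m_0(v_j/2) = 0 = \delta_j$ for $j\in\{1,2,3\}$.

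Finally, for the wavelet filters I would evaluate the cross-orthogonality condition \eqref{eqn:QQMFscalingwavelet} at $\xi = 0$, for each $\varepsilon \in \{1,2,3\}$:
\begin{equation*}
[m_{\varepsilon}(0)]^{\ast}[m_0(0)] + \sum_{j=1}^{3}\left[m_{\varepsilon}\left(\tfrac{v_j}{2}\right)\right]^{\ast}\left[m_0\left(\tfrac{v_j}{2}\right)\right] = [0].
\end{equation*}
Using $[m_0(v_j/2)] = 0$ for $j\in\{1,2,3\}$ and $[m_0(0)] = I_2$, every term in the sum vanishes and the identity reduces to $[m_{\varepsilon}(0)]^{\ast} = 0$, hence $m_{\varepsilon}(0) = 0$. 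The only mildly delicate step is the positive semi-definite summation argument, but that is immediate from the inner-product structure on $Z$ recalled earlier; everything else is direct substitution.
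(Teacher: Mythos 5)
Your proposal is correct and follows essentially the same route as the paper's proof: set $\xi=0$ in the Fourier-domain dilation equation to get $[m_0(0)]=I_2$, then invoke \eqref{eqn:QQMFscaling} and \eqref{eqn:QQMFscalingwavelet} at $\xi=0$. You merely make explicit the positive semi-definiteness argument that the paper leaves implicit when it "invokes" \eqref{eqn:QQMFscaling}, which is a welcome clarification rather than a deviation.
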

\begin{proof}
    Combining $\hat{\varphi}(0) = 1$ with \eqref{eqn:Qfourierscaling}, we have $[  m_0(0)]  = I_2$ which implies that $m_0(0)=1$. We also invoke \eqref{eqn:QQMFscaling} to derive that $\left[  m_0(\frac{v_j}{2})\right]  = [  0 ] $ which implies that $m_0(\frac{v_j}{2}) = 0$ for each $j \in \{1,2,3\}$. Finally, using \eqref{eqn:QQMFscalingwavelet}, we obtain $[  m_{\eps}(0)]  = [  0 ] $ implying that $m_{\eps}(0) = 0$ for all $\eps \in \{1,2,3\}$.
\end{proof}

We refer to \eqref{eqn:Qcompleteness} as the completeness condition. Note that this has an equivalent expression in terms of the quaternionic wavelet matrix. That is, $U(0)=[  1 ]  \oplus \tilde{U}$ where $\tilde{U}$ is a $3\times 3$ block matrix of spinor-vector matrices.

\subsubsection*{Compact support}

Another important design criterion that we want to consider is compact support. Compactly supported scaling and wavelet functions facilitate speedy and accurate computation of the coefficients in the wavelet decomposition of a given signal. 

\begin{proposition}\label{prop:Qcompactscaling}
    Let $\varphi$ be a scaling function associated with an MRA for $L^2(\mathbb{R}^2,\mathbb{R}_2)$, and let $m_0$ be the scaling filter. Then $\varphi$ is compactly supported if and only if  $m_0$ is a trigonometric polynomial. 
\end{proposition}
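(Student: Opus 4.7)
The plan is to prove the two implications separately, following the standard template for such compact-support characterisations but working throughout with SV matrices.

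For the forward direction, suppose $\varphi$ is compactly supported, say $\mathrm{supp}(\varphi) \subseteq K$ for some compact $K \subset \mathbb{R}^2$. I would exploit the orthonormality of $\{\varphi(\cdot - k)\}_{k \in \mathbb{Z}^2}$ from Definition~\ref{def:QMRA}\ref{def:QMRAcond5} by taking the right $L^2(\mathbb{R}^2,\mathbb{R}_2)$-inner product of both sides of the scaling equation \eqref{eqn:Qscaling} against $\varphi(\cdot - j)$. Since the shifts form an orthonormal basis of $V_0$ and $\tfrac{1}{4}\varphi(\cdot/2) \in V_{-1} \subseteq V_0$, this isolates
$$a_j^0 = \tfrac{1}{4}\bigl\langle \varphi(\cdot - j),\, \varphi(\cdot/2)\bigr\rangle.$$
The integrand is supported in $(j+K) \cap 2K$, which is empty for all but finitely many $j \in \mathbb{Z}^2$, so only finitely many $a_j^0$ are nonzero and $m_0(\xi) = \sum_k e^{2\pi\xi\wedge k}a_k^0$ is a trigonometric polynomial.

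For the backward direction, suppose $m_0(\xi)=\sum_{k\in F}e^{2\pi\xi\wedge k}a_k^0$ with $F\subset\mathbb{Z}^2$ finite, and set $M:=\max_{k\in F}|k|$. Here the cleanest route is a Paley--Wiener argument on the SV matrix $[\hat{\varphi}(\xi)]$. Iterating \eqref{eqn:Qfourierscaling} and using $\hat\varphi(0)=1$ gives the infinite product
$$[\hat{\varphi}(\xi)] = \prod_{j=1}^{\infty}\left[m_0\!\left(\tfrac{\xi}{2^j}\right)\right],$$
and because each factor $e^{2\pi\xi\wedge k}$ extends holomorphically to $\mathbb{C}^2$, so does $[m_0]$, with a bound of the form $\|[m_0(\xi)]\|\leq Ce^{2\pi M|\Im\xi|}$. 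To control the product I would split at $J\sim\log_2|\xi|$: for $j>J$ the factors are close to the identity (using $m_0(0)=1$ and smoothness of $m_0$) and the tail converges, while the head of $O(\log|\xi|)$ factors contributes an exponential-type bound using the telescoping $\sum_{j=1}^{J}|\xi|/2^j \leq 2|\xi|$. This yields an estimate $\|[\hat{\varphi}(\xi)]\|\leq C(1+|\xi|)^N e^{2\pi M|\Im\xi|}$ on $\mathbb{C}^2$, and a matrix-valued Paley--Wiener theorem then forces $\mathrm{supp}(\varphi)$ to lie in a fixed compact set of diameter $O(M)$.

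The main obstacle is the backward direction. The complication compared with the scalar/complex case is the non-commutativity of SV-matrix multiplication, so the usual scalar manipulations on infinite products must be done with operator norms, and one must verify that the partial products genuinely converge to an $L^2$ function before applying Paley--Wiener. An alternative, perhaps easier to formalise, is the cascade algorithm: starting from a compactly supported seed $\varphi_0$ and iterating $\varphi_{n+1}(x) = 4\sum_{k\in F}\varphi_n(2x-k)a_k^0$, an induction shows $\mathrm{supp}(\varphi_n) \subseteq S_n$ where $S_{n+1}=\tfrac{1}{2}(S_n + F)$, and the recursion $R_{n+1}=\tfrac{1}{2}(R_n+M)$ on the radii is contractive with fixed point $M$; passing to the limit (justified by the infinite product) then places $\mathrm{supp}(\varphi)$ in a ball of radius $M$. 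Either route relies crucially on Proposition~\ref{prop:kernelcommutes} to handle the interaction of the exponential kernels with SV matrices.
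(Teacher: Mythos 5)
The paper does not actually prove this proposition: it defers entirely to Hogan--Morris (\cite[Theorem~10]{hoganmorris1}, \cite[Theorem~5.6]{morristhesis}), so there is no in-paper argument to match yours against line by line. That said, your forward direction is exactly the argument the paper \emph{does} spell out for the companion result on wavelets (Proposition~\ref{prop:Qcompactwavelet}): pair \eqref{eqn:Qscaling} against $\varphi(\cdot-j)$, use orthonormality of the shifts and the right-module identity $\langle x, ya\rangle = \langle x,y\rangle a$ to isolate $a_j^0$, and observe that the supports $(j+K)$ and $2K$ are disjoint for all but finitely many $j$. That part is correct and complete.

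The backward direction is the standard Daubechies-type infinite-product/Paley--Wiener argument and is correct in outline, but two steps deserve more care than your sketch gives them. First, the identity $[\hat{\varphi}(\xi)] = \prod_{j=1}^{\infty}[m_0(\xi/2^j)]$ is obtained by iterating \eqref{eqn:Qfourierscaling} and letting $[\hat{\varphi}(\xi/2^J)]\to I_2$; this needs continuity of $\hat{\varphi}$ at the origin together with $\hat{\varphi}(0)=1$, which is not automatic for an $L^2$ function and must either be assumed (as the paper implicitly does when it says the product recovers $\varphi$ ``whenever the product converges'') or argued separately. Second, your cascade alternative has a genuine circularity as stated: the support recursion $S_{n+1}=\tfrac12(S_n+F)$ controls the supports of the iterates $\varphi_n$, but to transfer that to $\varphi$ you need the $\varphi_n$ to converge to $\varphi$ in a sense that preserves support (e.g.\ weak $L^2$ convergence with a uniform norm bound, via pointwise convergence of $\hat{\varphi}_n$ to $\hat{\varphi}$); ``justified by the infinite product'' elides precisely the convergence question that makes the cascade route nontrivial. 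The non-commutativity worry you raise is, by contrast, harmless here: the growth estimates only use submultiplicativity of the operator norm, the kernel commutation of Proposition~\ref{prop:kernelcommutes} handles the exponentials, and the Paley--Wiener step can be applied entrywise (each entry of $[\hat{\varphi}]$ lies in $\Lambda_0\oplus\Lambda_2\cong\mathbb{C}$), so no genuinely matrix-valued Paley--Wiener theorem is needed.
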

\begin{proof}
    See \cite[Theorem~10]{hoganmorris1} or \cite[Theorem~5.6]{morristhesis}.
\end{proof}

Through the wavelet equation \eqref{eqn:Qwaveletscaling}, the compact support property of the scaling function is inherited by the wavelets. This is shown in the following proposition.

\begin{proposition}\label{prop:Qcompactwavelet}
    Let $\varphi$ be a compactly supported scaling function associated with an MRA for $L^2(\mathbb{R}^2,\mathbb{R}_2)$, and let $\{\psi^{\eps}\}_{\eps=1}^{3}$ be the collection of wavelets associated to $\varphi$. Suppose further that $m_0$ is the scaling filter and $\{m_{\eps}\}_{\eps=1}^{3}$ are the wavelet filters. Then $\psi^{\eps}$ is compactly supported if and only if $m_{\eps}$ is a trigonometric polynomial. 
\end{proposition}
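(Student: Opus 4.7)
The plan is to mirror the strategy used to prove Proposition~\ref{prop:Qcompactscaling}, transferring it from the dilation equation \eqref{eqn:Qscaling} to the wavelet equation \eqref{eqn:Qwaveletscaling}. The key observation is that, since $m_\eps(\xi) = \sum_{k\in\mathbb{Z}^2} e^{2\pi \xi \wedge k} a_k^\eps$ is a $\mathbb{Z}^2$-periodic Fourier-type series, $m_\eps$ being a trigonometric polynomial is equivalent to the quaternion-valued sequence $(a_k^\eps)_{k \in \mathbb{Z}^2}$ being finitely supported. So the proposition reduces to showing that $\psi^\eps$ has compact support if and only if only finitely many $a_k^\eps$ are nonzero.

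For the reverse implication, I would just unwind \eqref{eqn:Qwaveletscaling} as
\begin{equation*}
\psi^\eps\!\left(\tfrac{x}{2}\right) = 4\sum_{k\in\mathbb{Z}^2} \varphi(x-k)\, a_k^\eps,
\end{equation*}
which, under the finite support assumption on $(a_k^\eps)$, is a finite sum of translates of the compactly supported function $\varphi$. A finite sum of compactly supported functions is compactly supported, so $\psi^\eps(\tfrac{\cdot}{2})$ and hence $\psi^\eps$ is compactly supported.

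For the forward implication, I would use the orthonormality of $\{\varphi(\cdot-k)\}_{k\in\mathbb{Z}^2}$ in $V_0$ together with the fact that $\tfrac14\psi^\eps(\tfrac{\cdot}{2}) \in V_{-1} \subset V_0$, so that the coefficients in \eqref{eqn:Qwaveletscaling} are recovered as inner products of the form $a_k^\eps = \langle \varphi(\cdot-k),\, \tfrac14 \psi^\eps(\tfrac{\cdot}{2})\rangle$. Since $\varphi$ has compact support (say $\operatorname{supp}\varphi \subset K_\varphi$) and $\psi^\eps$ has compact support (say $\operatorname{supp}\psi^\eps \subset K_\psi$), the supports of $\varphi(\cdot-k)$ and $\psi^\eps(\tfrac{\cdot}{2})$ fail to intersect for all $k \in \mathbb{Z}^2$ outside a bounded region, forcing $a_k^\eps = 0$ for all but finitely many $k$. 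Thus $m_\eps$ is a trigonometric polynomial.

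The proof is essentially routine; the only subtle point is making sure the inner-product identification of $a_k^\eps$ is carried out in the right $\mathbb{R}_2$-module sense (the inner product on the right $\mathbb{R}_2$-module $L^2(\mathbb{R}^2,\mathbb{R}_2)$ so that the quaternionic coefficient $a_k^\eps$ lives on the correct side), and invoking \cite[Theorem~10]{hoganmorris1} or \cite[Theorem~5.6]{morristhesis} for this module structure as needed. Beyond that, no new ideas beyond those already in Proposition~\ref{prop:Qcompactscaling} are required.
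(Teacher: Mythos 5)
Your proposal is correct and follows essentially the same route as the paper's proof: the reverse direction via a finite sum of compactly supported translates of $\varphi$, and the forward direction via the orthonormality identity $a_k^{\eps} = \langle \varphi(\cdot-k),\frac{1}{4}\psi^{\eps}(\frac{\cdot}{2})\rangle$ together with the disjoint-supports argument for large $k$. No substantive differences.
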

\begin{proof}
    For the forward direction, we deduce from \eqref{eqn:Qwaveletscaling} and the orthonormal shifts of $\varphi$ that $$\langle \varphi(\cdot-k),{\textstyle\frac{1}{4}\psi^{\eps}(\frac{\cdot}{2}}) \rangle = \int_{\mathbb{R}^2} \overline{\varphi(x-k)}\sum_{\ell \in \mathbb{Z}^2}\varphi(x-\ell)a_{\ell}^{\eps} \ dx = a_k^{\eps}.$$
    Since $\varphi$ and $\psi^{\eps}$ are both compactly supported, when $k$ is large enough the respective supports of $\varphi(x-k)$ and $\psi_{\eps}(\frac{x}{2})$ will be disjoint and so $a_k^{\eps}=0$. Thus, the sum in \eqref{eqn:Qwaveletscaling} is finite and $m_{\eps}$ is a trigonometric polynomial. For the reverse direction, if $m_{\eps}$ is a trigonometric polynomial then the sum in \eqref{eqn:Qwaveletscaling} is finite. Combining this with the compact support assumption for $\varphi$ yields the desired conclusion for $\psi^{\eps}$.
\end{proof}

Henceforth, we let $\eta \geq 4$ be even and $Q_{\eta}^2:= \{0,1,\ldots,\eta-1\}^2$. We say that $m: \mathbb{R}^2 \to \mathbb{R}_2$ is a trigonometric polynomial of degree $\eta-1$ whenever it is of the form
$$m(\xi) = \sum_{k\in Q_{\eta}^2} a_ke^{2\pi \xi \wedge k},$$  
where $(a_k)_{k\in Q_{\eta}^2} \subset \mathbb{R}_2$. We choose the support of $\varphi$ and $\psi^{\eps}$ to be $[0,\eta-1]^2$ and assume that $m_0$ and $m_{\eps}$ are trigonometric polynomials of degree $\eta-1$, for all $\eps \in \{1,2,3\}$.

\subsubsection*{Regularity}

For each multi-index $\alpha=(\alpha_1,\alpha_2) \in \mathbb{Z}_+^2$, we define $|\alpha|=\alpha_1 + \alpha_2$. For multi-indices $\alpha$ and $\beta$,  we write $\beta \leq \alpha$ to mean that $\beta_k \leq \alpha_k$ for $k \in \{1,2\}$, and we define the partial differential operator $\partial^{\alpha}$ by $$\partial^{\alpha} = \left(\frac{\partial}{\partial x_1}\right)^{\alpha_1}\left(\frac{\partial}{\partial x_2}\right)^{\alpha_2}.$$
Moreover, for $x\in \mathbb{R}^2$, we define $x^{\alpha}: = x_1^{\alpha_1}x_2^{\alpha_2}$ and $\alpha! = \alpha_1!\alpha_2!$. 

Furthermore, we require a product rule for differentiation of spinor-vector matrices of differentiable functions. For $\Omega \subset \mathbb{R}^2$, if $f,g:\Omega \to \mathbb{R}_2$ have continuous first-order partial derivatives and $|\alpha|=1$, the rule is given by
$$\frac{\partial}{\partial x_j}([f(x)][g(x)]) = \frac{\partial}{\partial x_j}([f(x)])[g(x)]  + [f(x)]  \frac{\partial}{\partial x_j}([g(x)])$$
where $j\in \{1,2\}$. Note that generally $\partial^{\alpha}([f(x)]) \neq [\partial^{\alpha}f(x)].$

The next proposition appeared in \cite[Theorem~11]{hoganmorris1} and \cite[Theorem~5.8]{morristhesis}. In its original statement, the conclusion only holds for $|\alpha| < m$. We improve the result so that the conclusion now holds for $|\alpha| \leq m$. %This makes it completely analogous to its counterpart in the complex-valued case.

\begin{proposition}\label{prop:Qvanishingmoments} Let $\varphi$ be a scaling function associated with an MRA for $L^2(\mathbb{R}^2,\mathbb{R}_2)$, and let $\{\psi^{\eps}\}_{\eps=1}^{3}$ be the collection of wavelets corresponding to $\varphi$. For $\eps \in \{1,2,3\}$, suppose $\psi^{\eps} \in C^{\mu}(\mathbb{R}^2,\mathbb{R}_2)$ where $\mu \in \mathbb{Z}_+$, and that there exists $C_{\mu}>0$ such that $|\psi^{\eps}(x)| \leq C_{\mu}(1+|x|)^{-q}$ with $q>\mu+1$. For each multi-index $\alpha=(\alpha_1,\alpha_2)$ with $|\alpha| \leq \mu$, suppose there exists $D_\alpha>0$ for which $|\partial^{\alpha}\psi^{\eps}(x)| \leq D_{\alpha}$ for all $x \in \mathbb{R}^2$, and that $\partial^{\alpha}\psi^{\eps} \in L^2(\mathbb{R}^2,\mathbb{R}_2)$. Then 
    \begin{equation}\int_{\mathbb{R}^2}x^{\alpha}\psi^{\eps}(x) dx =0 \, \mbox{ for } 0\leq|\alpha|\leq \mu.\label{eqn:Qvanishingmoments}
    \end{equation}
\end{proposition}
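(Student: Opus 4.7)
The plan is to reduce the integral identity to a statement about partial derivatives of $\hat\psi^{\eps}$ at the origin, and then establish those derivative conditions by induction using the wavelet equation in SV-matrix form together with the completeness and QQMF relations already proved in the excerpt.

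First, I would use the decay hypothesis $|\psi^{\eps}(x)|\le C_\mu(1+|x|)^{-q}$ with $q>\mu+1$ and the pointwise bound $|\partial^\alpha\psi^{\eps}|\le D_\alpha$ for $|\alpha|\le\mu$ to legitimise differentiating the quaternionic Fourier integral $\hat\psi^{\eps}(\xi)=\int_{\mathbb{R}^2}e^{2\pi\xi\wedge x}\psi^{\eps}(x)\,dx$ under the integral sign up to order $\mu$. Since $\xi\wedge x = e_{12}(\xi_1 x_2-\xi_2 x_1)$ lies in the commutative subalgebra $\Lambda_0\oplus\Lambda_2$, the differentiation goes through with no non-commutativity surprises, yielding
\begin{equation*}
\partial_{\xi_1}^{\alpha_1}\partial_{\xi_2}^{\alpha_2}\hat\psi^{\eps}(0) \;=\; (2\pi e_{12})^{|\alpha|}(-1)^{\alpha_2}\int_{\mathbb{R}^2}x_1^{\alpha_2}x_2^{\alpha_1}\psi^{\eps}(x)\,dx.
\end{equation*}
Since $e_{12}$ is a unit, \eqref{eqn:Qvanishingmoments} is equivalent to $\partial^\alpha\hat\psi^{\eps}(0)=0$ for all $|\alpha|\le\mu$, which in turn is equivalent to the SV-matrix statement $\partial^\alpha_\xi[\hat\psi^{\eps}(\xi)]|_{\xi=0}=[0]$.

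Next, I would establish $\partial^\alpha[\hat\psi^{\eps}(\cdot)](0)=[0]$ for $|\alpha|\le\mu$ by induction on $|\alpha|$, built on the wavelet equation $[\hat\psi^{\eps}(2\xi)]=[\hat\varphi(\xi)][m_{\eps}(\xi)]$ from Proposition~\ref{prop:Qfourierwaveletscaling}. The base case $|\alpha|=0$ is immediate from $[\hat\varphi(0)]=[1]$ and $[m_{\eps}(0)]=[0]$, which is Proposition~\ref{prop:Qcompleteness}. For the inductive step, applying the chain rule on the dilation (contributing the scalar factor $2^{|\alpha|}$) and the Leibniz rule to the matrix product—valid here because matrix multiplication is bilinear—gives
\begin{equation*}
2^{|\alpha|}\,\partial^\alpha[\hat\psi^{\eps}(\cdot)](0) \;=\; \sum_{\beta\le\alpha}\binom{\alpha}{\beta}\,\partial^\beta[\hat\varphi(\cdot)](0)\cdot\partial^{\alpha-\beta}[m_{\eps}(\cdot)](0).
\end{equation*}
Iterating the scaling relation $[\hat\varphi(2\xi)]=[\hat\varphi(\xi)][m_0(\xi)]$ recursively expresses the derivatives $\partial^\beta[\hat\varphi](0)$ in terms of derivatives of $[m_0]$ at the origin, which are in turn constrained through the QQMF identities \eqref{eqn:QQMFscaling}--\eqref{eqn:QQMFwavelets} evaluated at $\xi=0$ and at the half-integer shifts $v_j/2$. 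A careful accounting using these constraints shows that every term in the Leibniz sum vanishes for $|\alpha|\le\mu$, closing the induction.

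The main obstacle is Step 3's bookkeeping: tracking the non-commutative SV-matrix products in the Leibniz expansion and systematically invoking the QQMF/completeness relations to annihilate each mixed-order term, all while respecting the fact that differentiation and the SV-matrix bracket $[\cdot]$ do not naively commute (one must verify that $\partial^\alpha[f(\cdot)](0)=[0]$ truly encodes $\partial^\alpha f(0)=0$). The sharpening from $|\alpha|<\mu$ (as in \cite{hoganmorris1,morristhesis}) to $|\alpha|\le\mu$ is gained precisely at the endpoint case: the hypothesis $|\partial^\alpha\psi^{\eps}|\le D_\alpha$ for $|\alpha|\le\mu$ (rather than strictly less) is exactly what is needed to legitimise the Fourier derivative exchange at the critical order $|\alpha|=\mu$, so that the inductive argument extends cleanly through the boundary value.
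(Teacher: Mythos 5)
Your reduction of \eqref{eqn:Qvanishingmoments} to the statement $\partial^{\alpha}\hat{\psi}^{\eps}(\xi)\bigr|_{\xi=0}=0$ for $|\alpha|\leq\mu$ is reasonable in spirit (it is the content of Proposition~\ref{prop:Qregularity}\ref{prop:Qregularity1}), but your Step 3 cannot close. In the Leibniz expansion of $2^{|\alpha|}\,\partial^{\alpha}[\hat{\psi}^{\eps}](0)$ the term with $\beta=0$ is $[\hat{\varphi}(0)]\,\partial^{\alpha}[m_{\eps}](0)=I_2\,\partial^{\alpha}[m_{\eps}](0)$, so closing the induction requires $\partial^{\alpha}m_{\eps}(0)=0$ --- which is precisely statement \ref{prop:Qregularity2} of Proposition~\ref{prop:Qregularity}, i.e.\ a condition \emph{equivalent} to the conclusion you are trying to prove, not a consequence of the QQMF and completeness identities. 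The relations \eqref{eqn:QQMFscaling}--\eqref{eqn:QQMFwavelets} together with $\hat{\varphi}(0)=1$ only yield the order-zero facts $m_0(0)=1$, $m_0(\frac{v_j}{2})=0$ and $m_{\eps}(0)=0$; they impose no constraint on higher derivatives of the filters at the origin. If they did, every compactly supported orthonormal MRA wavelet would automatically have $\mu+1$ vanishing moments for every $\mu$ (false already in the Haar case), and the regularity constraint set $C_2$ in Problem~\ref{prob:Qwaveletfeasibilityproblem} would be vacuous. So the argument is circular at its key step.

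The hypotheses you relegate to a supporting role --- $\psi^{\eps}\in C^{\mu}$, the bounds $|\partial^{\alpha}\psi^{\eps}|\leq D_{\alpha}$, and the decay --- are in fact the engine of the theorem and must be used analytically, not merely to justify differentiating under the integral sign. The proof the paper points to (\cite{hoganmorris1,morristhesis}) runs the classical Daubechies-type argument: exploit the orthogonality $\langle\psi^{\eps}_{jk},\psi^{\eps}_{00}\rangle=0$ across scales, Taylor-expand $\psi^{\eps}$ to order $\mu$ about suitable points, and induct on $|\alpha|$ so that, once the lower-order moments are known to vanish, the surviving leading term forces the next moment to vanish; the decay hypothesis controls the remainder. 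The paper's stated improvement from $|\alpha|<\mu$ to $|\alpha|\leq\mu$ is obtained by writing the Taylor remainder in Peano form, not (as you suggest) from the endpoint case of a Fourier-differentiation exchange. As a secondary point, with only $q>\mu+1$ in two dimensions the integral $\int_{\mathbb{R}^2}|x|^{\mu}(1+|x|)^{-q}\,dx$ need not converge absolutely, so even the interchange of differentiation and integration at order $\mu$ in your first step would require additional care.
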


%\begin{proposition}\label{thm:quatvanish} Let $\{f_{j,k}\}_{j \in \mathbb{Z},k\in \mathbb{Z}^2}$ be an orthonormal collection in $L^2(\mathbb{R}^2,\mathbb{R}_2)$. Suppose $f \in C^m(\mathbb{R}^2,\mathbb{R}_2)$ and that there exists $C$ such that $|f(x)| \leq C(1+|x|)^{-q}$ with $q>m+1$. If for each multi-index $\alpha=(\alpha_1,\alpha_2)$ with $|\alpha| \leq m$ there exists $D_\alpha$ such that $|\partial^{\alpha}f(x)| \leq D_{\alpha}$ for all $x \in \mathbb{R}^2$ and $\partial^{\alpha}f \in L^2(\mathbb{R}^2,\mathbb{R}_2)$, then 
%	$$\int_{\mathbb{R}^2}x^{\alpha}f(x) dx =0.$$
%\end{proposition}
\begin{proof} See \cite[Theorem~11]{hoganmorris1} or \cite[Theorem~5.8]{morristhesis}. The improvement is attained by replacing the version of Taylor's theorem used in the proof by another where the remainder is expressed in Peano's form.
\end{proof}

\begin{proposition}\label{prop:Qregularity}
    Let $\varphi$ and $\{\psi^{\eps}\}_{\eps=1}^{3}$ be as in Proposition~\ref{prop:Qvanishingmoments} where $\psi^{\eps}$ satisfies $\int_{\mathbb{R}^2}x^{\alpha}\psi^{\eps}(x) dx = 0$, for $\eps \in \{1,2,3\}$ and $|\alpha| \leq \mu$. Suppose further that $m_0$ is the scaling filter and $\{m_{\eps}\}_{\eps=1}^{3}$ are the wavelet filters. Then the following statements hold and are equivalent.
    \begin{enumerate}[label=(\alph*)]
        \itemsep0em
        \item $\partial^{\alpha}\hat{\psi}^{\eps}(\xi)\bigr|_{\xi=0}=\ 0$ for $\eps \in \{1,2,3\}$, $0\leq |\alpha| \leq \mu$,\label{prop:Qregularity1}
        \item $\partial^{\alpha}m_{\eps}(\xi)\bigr|_{\xi=0}=\ 0$  for $\eps \in \{1,2,3\}$, $0\leq  |\alpha| \leq \mu$, \label{prop:Qregularity2}
        \item $\partial^{\alpha}m_0(\xi)\bigr|_{\xi = \frac{v_j}{2}}=0$  for $j \in \{1,2,3\}$, $0\leq |\alpha| \leq \mu$.\label{prop:Qregularity3}
    \end{enumerate}
\end{proposition}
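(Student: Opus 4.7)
The plan is to establish the chain (a) $\Leftrightarrow$ (b) and (b) $\Leftrightarrow$ (c) separately, each by induction on $|\alpha|$. The base cases ($|\alpha|=0$) are all furnished by Proposition~\ref{prop:Qcompleteness}: namely $\hat\varphi(0)=1$, $m_\eps(0)=0$ for $\eps\in\{1,2,3\}$, and $m_0(v_j/2)=0$ for $j\in\{1,2,3\}$, which translate to $[\hat\varphi(0)]=I_2$, $[m_\eps(0)]=0$, and $[m_0(v_j/2)]=0$. Throughout, I work entry-wise on SV matrices, exploiting that matrix multiplication is bilinear and hence Leibniz's rule applies to products of SV matrices, together with the observation that for $\mathbb{Z}^2$-periodic $f$ (such as $m_0$ and $m_\eps$), the vanishing of $\partial^\alpha[f](\xi_0)$ at $\xi_0=0$ or $\xi_0=v_j/2$ is equivalent to $\partial^\alpha f(\xi_0)=0$, since $\pm\xi_0$ represent the same argument modulo $\mathbb{Z}^2$.

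For (a) $\Leftrightarrow$ (b), I would apply $\partial^\alpha$ to both sides of the refinement identity $[\hat\psi^\eps(2\xi)]=[\hat\varphi(\xi)][m_\eps(\xi)]$ from Proposition~\ref{prop:Qfourierwaveletscaling} at $\xi=0$. The chain rule gives $2^{|\alpha|}\partial^\alpha[\hat\psi^\eps](0)$ on the left, while Leibniz expands the right as $\sum_{\beta\leq\alpha}\binom{\alpha}{\beta}\partial^\beta[\hat\varphi](0)\,\partial^{\alpha-\beta}[m_\eps](0)$. The direction (b) $\Rightarrow$ (a) is immediate, since every summand on the right vanishes. For (a) $\Rightarrow$ (b), I proceed by induction on $|\alpha|$: assuming $\partial^\gamma m_\eps(0)=0$ for $|\gamma|<|\alpha|$, only the $\beta=0$ term survives on the right (using $[\hat\varphi(0)]=I_2$), yielding $\partial^\alpha m_\eps(0)=0$.

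For (b) $\Leftrightarrow$ (c), I use the QQMF conditions and the resulting unitarity of the wavelet matrix $U(\xi)$. For (c) $\Rightarrow$ (b), differentiate~\eqref{eqn:QQMFscalingwavelet} at $\xi=0$: all terms with $j\in\{1,2,3\}$ vanish by (c), leaving $\sum_{\beta\leq\alpha}\binom{\alpha}{\beta}[\partial^\beta m_\eps(0)]^*[\partial^{\alpha-\beta}m_0(0)]=0$; the $\beta=\alpha$ term equals $[\partial^\alpha m_\eps(0)]^*$ (using $[m_0(0)]=I_2$) while the $\beta<\alpha$ terms vanish by the induction hypothesis, yielding (b) at level $|\alpha|$. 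For (b) $\Rightarrow$ (c), I would invoke the row-orthogonality form of $U(\xi)U(\xi)^*=I$ (which is equivalent to the column form $U^*U=I$ since $U$ is a finite-dimensional square matrix), namely $\sum_{\eps=0}^3[m_\eps(\xi)][m_\eps(\xi+v_j/2)]^*=0$ for $j\in\{1,2,3\}$. Arguing symmetrically, terms with $\eps\in\{1,2,3\}$ vanish under (b), while the surviving $\eps=0$ term combined with $[m_0(0)]=I_2$ and the induction hypothesis on lower-order derivatives of $m_0$ at $v_j/2$ isolates $\partial^\alpha m_0(v_j/2)=0$.

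The main obstacle will be the careful Leibniz-rule bookkeeping in the inductive step of (b) $\Leftrightarrow$ (c), in particular ensuring that the dominant term decouples cleanly from the lower-order cross terms via the base-case identities $[m_0(0)]=I_2$, $[m_0(v_j/2)]=0$, and $[m_\eps(0)]=0$ from Proposition~\ref{prop:Qcompleteness}. A minor technical point is that in general $\partial^\alpha[f]\neq[\partial^\alpha f]$, because $[f(\xi)]$ involves both $f(\xi)$ and $f(-\xi)$; however, the $\mathbb{Z}^2$-periodicity remark above guarantees that the conclusion transfers cleanly from the SV matrix to the underlying quaternion-valued function at the evaluation points $\xi=0$ and $\xi=v_j/2$. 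The improvement from $|\alpha|<\mu$ to $|\alpha|\leq\mu$ is automatic in this scheme, matching the strengthened vanishing-moments statement of Proposition~\ref{prop:Qvanishingmoments}.
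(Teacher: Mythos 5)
Your proposal is correct, and its overall strategy (Leibniz rule applied to the refinement and QQMF identities, induction on $|\alpha|$ anchored at the completeness conditions $[\hat\varphi(0)]=I_2$, $[m_\eps(0)]=[0]$, $[m_0(v_j/2)]=[0]$) matches the paper's. Two points of genuine divergence are worth recording. First, for \ref{prop:Qregularity1}$\iff$\ref{prop:Qregularity2} you give a self-contained argument by differentiating $[\hat\psi^\eps(2\xi)]=[\hat\varphi(\xi)][m_\eps(\xi)]$ at $\xi=0$, whereas the paper simply defers the forward implication to \cite[Theorem~5.9]{morristhesis} and reverses it for the converse; your version is the explicit form of the same computation and is fine, including the $2^{|\alpha|}$ chain-rule factor and the flag about $\partial^\alpha[f]\neq[\partial^\alpha f]$. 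Second, for \ref{prop:Qregularity2}$\implies$\ref{prop:Qregularity3} you differentiate the row-orthogonality relation $\sum_{\eps=0}^3[m_\eps(\xi)][m_\eps(\xi+\frac{v_j}{2})]^{\ast}=[0]$ (legitimate, since $U^{\ast}U=I_8$ for a finite square matrix forces $UU^{\ast}=I_8$), so that hypothesis \ref{prop:Qregularity2} kills every $\eps\in\{1,2,3\}$ term and $[m_0(0)]=I_2$ isolates $\partial^\alpha[m_0]\big|_{v_j/2}$ for each $j$ separately. The paper instead differentiates the column relation \eqref{eqn:QQMFscalingwavelet}, which leaves a $3\times3$ linear system in the unknowns $\partial^{\alpha}[m_0(\xi)]\big|_{\xi=v_j/2}$ whose coefficient matrix is the lower-right block of $U(0)$; one must then invoke its unitarity (hence nonsingularity) from the completeness condition to conclude. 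Your route buys a cleaner symmetry between the two directions and avoids the invertibility step, at the mild cost of having to justify passing from $U^{\ast}U=I$ to $UU^{\ast}=I$; both arguments are valid.
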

\begin{proof} We first note that with the given assumptions, statement \ref{prop:Qregularity1} holds as proved in \cite[Theorem~5.9]{morristhesis}. We now show that statements \ref{prop:Qregularity1}, \ref{prop:Qregularity2} and \ref{prop:Qregularity3} are equivalent. 
    
    \ref{prop:Qregularity1}$\iff$\ref{prop:Qregularity2}: The forward implication is proved as part of \cite[Theorem~5.9]{morristhesis}. The backward direction is obtained by reversing the arguments made to establish the forward implication.
    
    \ref{prop:Qregularity2}$\iff$\ref{prop:Qregularity3}: Differentiating both sides of \eqref{eqn:QQMFscalingwavelet} using the product rule and the resulting version of Leibniz rule for partial derivatives yields 
    \begin{equation}\label{eqn:derivativeQMFquat}
        \sum_{j=0}^3\sum_{\beta\leq \alpha}\binom{\alpha}{\beta}\partial^{\beta}\left([  m_{\eps}(\xi+{\textstyle \frac{v_j}{2}})] ^{\ast}\right) \partial^{\alpha-\beta}\left([  m_0(\xi+{\textstyle \frac{v_j}{2}})] \right)= [  0 ] 
    \end{equation}
    for each $\eps\in \{1,2,3\}$.  For the backward direction, suppose statement \ref{prop:Qregularity3} holds. We again establish the desired result by an induction on $|\alpha|$. If $|\alpha|=0$, then it readily follows from completeness conditions that $m_{\eps}(0)=0$ for all $\eps\in \{1,2,3\}$. Now, suppose that $\partial^{\alpha}m_{\eps}(\xi)\bigr|_{\xi=0}=\ 0$ for each $\alpha$ with $|\alpha| < \ell < \mu$. Fix an $\alpha$ with $|\alpha|=\ell$. Using statement \ref{prop:Qregularity3} and setting $\xi=0$ in \eqref{eqn:derivativeQMFquat} yields
    \begin{equation*}
        \sum_{\beta\leq \alpha}\binom{\alpha}{\beta}\big(\partial^{\beta}\left([m_{\eps}(\xi)]^{\ast}\right) \partial^{\alpha-\beta}\left([m_0(\xi)]\right)\big)\big|_{\xi=0}= [0].
    \end{equation*}
    By the inductive hypothesis, the sum collapses to
$\big(\partial^{\alpha}\left([m_{\eps}(\xi)]^{\ast}\right) [m_0(\xi)]\big)\big|_{\xi=0}= [0]$,
    and since $[m_0(0)]=I_2$, we conclude that $\partial^{\alpha}\left([m_\eps(\xi)]^{\ast}\right)\big|_{\xi=0} = \left(\partial^{\alpha}[m_{\eps}(\xi)]\right)^{\ast}\big|_{\xi=0} =[0]$ where $|\alpha|=\ell$. Consequently, $\partial^{\alpha}m_{\eps}(\xi)\big|_{\xi=0}=0$ for $|\alpha|\leq\ell$ and for each $\eps \in \{1,2,3\}$. Therefore, the conclusion holds by induction. The forward direction is proved by a similar argument. 

\end{proof}

\subsubsection*{Symmetry}

For simplicity, we only consider \emph{point symmetry} about the center of support of the scaling function $\varphi$. That is, if $\varphi$ is supported on $[0,\eta-1]^2$ (for even $\eta \geq 4$) and $P=\left(\frac{\eta-1}{2},\frac{\eta-1}{2}\right)$, we want to impose the condition $\varphi(x) = \varphi(2P-x)$ for all $x\in \mathbb{R}^2$.

\begin{theorem}\label{thm:Qsymmequiv2D}
    For even $\eta\geq 4$, let $P=\left(\frac{\eta-1}{2},\frac{\eta-1}{2}\right)$. Suppose $\varphi$ is a scaling function for an MRA of $L^2(\mathbb{R}^2,\mathbb{R}_2)$ which is supported on $[0,\eta-1]^2$, and let $m_0$ be the associated scaling filter. Then the following statements are equivalent.
    \begin{enumerate}[label=(\alph*)]
        \itemsep0em
        \item $\varphi(x) = \varphi(2P-x)$,\label{thm:Qsymmequiv2Da}
        \item $\hat{\varphi}(\xi) = e^{4\pi \xi \wedge P}\hat{\varphi}(-\xi)$,\label{thm:Qsymmequiv2Db}
        \item $m_0(\xi) = e^{4\pi\xi \wedge P} m_0(-\xi)$.\label{thm:Qsymmequiv2Dc}
    \end{enumerate}
\end{theorem}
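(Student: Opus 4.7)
The plan is to establish (a)$\iff$(b) by direct Fourier-domain manipulation and then (b)$\iff$(c) by feeding the symmetry identity into the spinor-vector form of the refinement equation of Proposition~\ref{prop:Qfourierscaling}.

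For (a)$\iff$(b), I would rewrite $\varphi(2P-x)=g(x-2P)$ with $g(y):=\varphi(-y)$. A direct change of variables in the definition of $\Fq$ yields $\hat{g}(\xi)=\hat{\varphi}(-\xi)$, and the translation rule $\Fq(f(\cdot-a))(\xi)=e^{2\pi\xi\wedge a}\Fq f(\xi)$ with $a=2P$ then gives
$$\widehat{\varphi(2P-\cdot)}(\xi)=e^{4\pi\xi\wedge P}\hat{\varphi}(-\xi).$$
Since $\Fq^2=\Id$, so that $\Fq$ is injective, this chain of equalities establishes (a)$\iff$(b) at once.

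For (b)$\iff$(c) the crucial observation is that $\xi\wedge P\in\Lambda_2$, so $e^{4\pi\xi\wedge P}$ is a spinor. I will need two consequences of this: (i) $[e^{4\pi\xi\wedge P}\,q]=[e^{4\pi\xi\wedge P}]\,[q]$ for any quaternion-valued $q$ (verifiable from the commutation relation $x_sy_v=y_v\overline{x_s}$); and (ii) $[e^{4\pi\xi\wedge P}]$ commutes with every SV matrix, which is the content of Proposition~\ref{prop:kernelcommutes}. To prove (b)$\Rightarrow$(c), I would take SV matrices in (b) at $\xi$ and at $2\xi$, substitute both into the refinement equation $[\hat{\varphi}(2\xi)]=[\hat{\varphi}(\xi)][m_0(\xi)]$ and its analogue at $-\xi$, and use (ii) to move the exponential SV matrices past $[\hat{\varphi}(-\xi)]$. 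Since $[\hat{\varphi}(0)]=I_2$, continuity makes $[\hat{\varphi}(-\xi)]$ invertible on some neighbourhood of $0$, and cancellation gives $[m_0(\xi)]=[e^{4\pi\xi\wedge P}\,m_0(-\xi)]$ on that neighbourhood. As $m_0$ is a trigonometric polynomial, the identity extends to all of $\mathbb{R}^2$, and by injectivity of the SV-matrix map together with (i), I recover $m_0(\xi)=e^{4\pi\xi\wedge P}m_0(-\xi)$.

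For (c)$\Rightarrow$(b), I would use the infinite product $[\hat{\varphi}(\xi)]=\prod_{j\geq 1}[m_0(\xi/2^j)]$. Inserting (c) into each factor and using (ii) to collect every spinor factor $[e^{4\pi(\xi/2^j)\wedge P}]$ on the left, the remaining product reassembles $[\hat{\varphi}(-\xi)]$ while the spinor product telescopes, via commutativity of spinors and $\sum_{j\geq 1}2^{-j}=1$, to $[e^{4\pi\xi\wedge P}]$; this gives (b) after one more application of (i). The main obstacle I anticipate throughout is the bookkeeping forced by the non-commutativity of quaternion multiplication: identities for $\varphi$ or $m_0$ lift cleanly to SV-matrix identities only when one factor is a spinor, so the whole argument hinges on recognising the spinor status of $e^{4\pi\xi\wedge P}$ and systematically exploiting Proposition~\ref{prop:kernelcommutes}.
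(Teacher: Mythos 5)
Your proposal is correct and follows essentially the same route as the paper: the change-of-variables/translation computation for (a)$\iff$(b), substitution of the symmetry identity into the SV-matrix refinement equation together with Proposition~\ref{prop:kernelcommutes} for (b)$\Rightarrow$(c), and the telescoping infinite product $\prod_{j\geq1}[m_0(2^{-j}\xi)]$ with $\sum_{j\geq1}2^{-j}=1$ for the converse. Your added care in justifying the left-cancellation of $[\hat{\varphi}(\xi)]$ (invertibility near $0$ by continuity, then extension via the trigonometric-polynomial structure of $m_0$) fills in a step the paper passes over silently, but the argument is otherwise the same.
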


\begin{proof}
    
    \ref{thm:Qsymmequiv2Da}$\iff$\ref{thm:Qsymmequiv2Db}: Using the definition of the quaternionic Fourier transform, we have
    \begingroup
    \allowdisplaybreaks
    \begin{align*}
        \varphi(x) = \varphi(2P-x) \iff \hat{\varphi}(\xi) 
        &=\int_{\mathbb{R}^2} e^{2\pi\xi\wedge x} \varphi(2P-x) dx
        %=\int_{\mathbb{R}^2} e^{2\pi\xi\wedge (-x)} \varphi(2P+x) dx\\
        =\int_{\mathbb{R}^2} e^{2\pi\xi\wedge (2P-x)} \varphi(x) dx\\
        &=e^{4\pi \xi \wedge P}\int_{\mathbb{R}^2} e^{-2\pi\xi\wedge x} \varphi(x) dx=e^{4\pi \xi \wedge P}\hat{\varphi}(-\xi).
    \end{align*}
    \endgroup
    
    \ref{thm:Qsymmequiv2Db}$\iff$\ref{thm:Qsymmequiv2Dc}: Suppose $\hat{\varphi}(\xi) = e^{4\pi \xi \wedge P}\hat{\varphi}(-\xi)$. In terms of spinor-vector matrices, this statement becomes $$[\hat{\varphi}(\xi)] = [e^{4\pi \xi \wedge P}\hat{\varphi}(-\xi)] = [e^{4\pi \xi \wedge P}][\hat{\varphi}(-\xi)].$$
    From \eqref{eqn:Qfourierscaling}, we also know that $		[\hat{\varphi}(2\xi)] = [\hat{\varphi}(\xi)] [m_0(\xi)]$. Thus, by Proposition~\ref{prop:kernelcommutes} we obtain
    \begin{align*}
        [\hat{\varphi}(\xi)][m_0(\xi)] &= [\hat{\varphi}(2\xi)] = [e^{8\pi \xi \wedge P}][\hat{\varphi}(-2\xi)]
        =[e^{8\pi \xi \wedge P}][\hat{\varphi}(-\xi)][m_0(-\xi)]\\
        &=[e^{8\pi \xi \wedge P}][e^{-4\pi\xi \wedge P}][\hat{\varphi}(\xi)][m_0(-\xi)]
        =[\hat{\varphi}(\xi)][e^{4\pi\xi \wedge P}][m_0(-\xi)].
    \end{align*}
    Consequently,  $[m_0(\xi)]=[e^{4\pi\xi \wedge P}m_0(-\xi)]$ which yields the desired conclusion. Conversely, suppose $m_0(\xi) = e^{4\pi\xi \wedge P} m_0(-\xi)$. Then
    \begin{align*}
        [\hat{\varphi}(\xi)] &= \prod_{j=1}^{\infty}[m_0(2^{-j}\xi)] = \prod_{j=1}^{\infty}\left([e^{4\pi \xi \wedge P/2^{-j}}][m_0(-2^{-j}\xi)]\right)
        = \left(\prod_{j=1}^{\infty}[e^{4\pi \xi \wedge P/2^{-j}}]\right) \left(\prod_{j=1}^{\infty}[m_0(-2^{-j}\xi)]\right)\\
        &= [e^{4\pi \xi \wedge P}][\hat{\varphi}(-\xi)] = [e^{4\pi \xi \wedge P}\hat{\varphi}(-\xi)]
    \end{align*}
    which yields the desired conclusion.	
\end{proof}

\section{Quaternionic wavelet construction}\label{sec:Qwaveletconstruction}

In this section, we investigate the properties of the wavelet matrix $U(\xi)$ which was introduced in Definition~\ref{def:Qwaveletmatrix}. The goal of this section is to express the wavelet design criteria in terms of the quaternionic wavelet matrix $U(\xi)$.

%\begin{definition}\label{def:Qwaveletmatrix}
%	Let $\varphi$ be a scaling function associated with an MRA for $L^2(\mathbb{R}^2,\mathbb{R}_2)$, $\{\psi^{\eps}\}_{\eps=1}^{3}$ be the collection of wavelets corresponding to $\varphi$, and $\{v_j\}_{j=0}^3$ be defined as in \eqref{eqn:defV2}. Suppose further that $m_0$ is the scaling filter and $\{m_{\eps}\}_{\eps=1}^{3}$ are the wavelet filters. We call the $\mathbb{Z}^2$-periodic matrix-valued function $U:\mathbb{R}^2 \to \mathbb{R}_2^{8 \times 8}$ the \emph{wavelet matrix} whose entry-wise function value is given by
%\begin{equation*}
%	U(\xi) = \begin{bmatrix}
    %		[m_0(\xi) ]&[ m_1(\xi)] & [m_2(\xi)] & [m_3(\xi)]\\
    %		[m_0(\xi+{\textstyle \frac{v_1}{2}})] & [m_1(\xi+{\textstyle \frac{v_1}{2}})] & [m_2(\xi+{\textstyle \frac{v_1}{2}})] & [m_3(\xi+{\textstyle \frac{v_1}{2}})] \\
    %		[m_0(\xi+{\textstyle \frac{v_2}{2}})] & [ m_1(\xi+{\textstyle \frac{v_2}{2}})] & [m_2(\xi+{\textstyle \frac{v_2}{2}})] &[m_3(\xi+{\textstyle \frac{v_2}{2}})] \\
    %		[m_0(\xi+{\textstyle \frac{v_3}{2}})] & [m_1(\xi+{\textstyle \frac{v_3}{2}})] & [m_2(\xi+{\textstyle \frac{v_3}{2}})] &[m_3(\xi+{\textstyle \frac{v_3}{2}})]\\
    %	\end{bmatrix}.
%\end{equation*}
%\end{definition}

We first show that the wavelet matrix may be viewed as a matrix-valued trigonometric series. 

\begin{lemma}\label{lem:filterproduct} The spinor-vector matrix of a trigonometric series $m(\xi) = \sum_{k\in \mathbb{Z}}e^{2\pi\xi\wedge k} a_k$ takes the form $$[m(\xi)] = \sum_{k \in \mathbb{Z}^2}[e^{2\pi\xi\wedge k}][a_k].$$
\end{lemma}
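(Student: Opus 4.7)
The plan is to reduce the claim to a termwise identity by exploiting additivity of the SV-matrix operation, and then verify the single-term identity directly using the commutation relations between spinor and vector parts of quaternions.

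First, I would observe that the assignment $f\mapsto [f]$ is additive in $f$. Indeed, the scalar-component decomposition $f=f_0+f_1 e_1+f_2 e_2+f_{12}e_{12}$ is $\mathbb{R}$-linear, hence so is the induced splitting $f=f_s+f_v$ into spinor and vector parts. Since each entry of $[f(x)]$ is one of $f_s(\pm x)$ or $f_v(\pm x)$, we obtain $[f+g]=[f]+[g]$ immediately. Applying this to $m(\xi)=\sum_{k\in\mathbb{Z}^2} e^{2\pi\xi\wedge k}a_k$ reduces the lemma to proving the termwise identity
$$[e^{2\pi\xi\wedge k}a_k]=[e^{2\pi\xi\wedge k}][a_k]$$
for each fixed $k\in\mathbb{Z}^2$ and each $\xi\in\mathbb{R}^2$.

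For the termwise identity, write $y(\xi):=e^{2\pi\xi\wedge k}$. Because $\xi\wedge k\in \Lambda_2$, the exponential expands as $y(\xi)=\cos(2\pi(\xi_1 k_2-\xi_2 k_1))+e_{12}\sin(2\pi(\xi_1 k_2-\xi_2 k_1))\in\Lambda_0\oplus\Lambda_2$, so $y$ is purely spinor-valued with zero vector part, and $y(-\xi)=\overline{y(\xi)}$. Decomposing $a_k=(a_k)_s+(a_k)_v$ and using commutation relation~(a) for the spinor--spinor product and relation~(b) for the spinor--vector product, the spinor and vector parts of $y(\xi)a_k$ work out to $y(\xi)(a_k)_s$ and $y(\xi)(a_k)_v$, respectively. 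With this at hand, explicit matrix multiplication of
$$[y(\xi)]=\begin{bmatrix} y(\xi) & 0 \\ 0 & \overline{y(\xi)}\end{bmatrix}\quad\text{and}\quad [a_k]=\begin{bmatrix} (a_k)_s & (a_k)_v \\ (a_k)_v & (a_k)_s\end{bmatrix}$$
reproduces $[y(\xi)a_k]$: the $(1,1)$ and $(1,2)$ entries agree by inspection, the $(2,2)$ entry matches because $y(-\xi)=\overline{y(\xi)}$ and spinors commute, and the $(2,1)$ entry matches by the same substitution combined with relation~(b).

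The main obstacle I anticipate is bookkeeping rather than any substantive difficulty: the asymmetry between the first and second rows of the SV matrix (which are evaluated at $\xi$ and $-\xi$ respectively) forces one to keep careful track of conjugates in the spinor entries, and to invoke $(a_k)_v\,\overline{y(\xi)}=y(\xi)(a_k)_v$ to align the $(2,1)$ entries. If $m$ were only a formal series rather than a finite sum, the additivity step would additionally require a convergence argument, but since all filters in this paper are trigonometric polynomials the sums are finite and this issue does not arise.
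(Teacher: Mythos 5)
Your proposal is correct and follows essentially the same route as the paper: the paper's one-line computation likewise expands $[m(\xi)]$ entrywise, uses that $e^{2\pi\xi\wedge k}\in\Lambda_0\oplus\Lambda_2$ so its SV matrix is the diagonal matrix $\operatorname{diag}(e^{2\pi\xi\wedge k},e^{-2\pi\xi\wedge k})$, and recognises each summand as the product $[e^{2\pi\xi\wedge k}][a_k]$. Your explicit appeal to commutation relation (b) for the $(2,1)$ entry is harmless but unnecessary, since the matrix product already places $y(-\xi)$ on the left of $(a_k)_v$, matching $(y(-\xi)a_k)_v$ directly.
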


\begin{proof}
    By definition of the spinor-vector matrix of a function, we have
    \begingroup
    \allowdisplaybreaks 
    \begin{align*}
        [m(\xi)] &= \begin{bmatrix}	(m(\xi))_s & (m(\xi))_v\\ (m(-\xi))_v & (m(-\xi))_s\end{bmatrix}
        %&= \begin{bmatrix} \sum_{k\in \mathbb{Z}^2}e^{2\pi\xi\wedge k} (a_k)_s & \sum_{k\in \mathbb{Z}^2}e^{2\pi\xi\wedge k} (a_k)_v\\ \sum_{k\in \mathbb{Z}^2}e^{-2\pi\xi\wedge k} (a_k)_v & \sum_{k\in \mathbb{Z}^2}e^{-2\pi\xi\wedge k} (a_k)_s\end{bmatrix}\\
        %&= \sum_{k\in \mathbb{Z}^2}\begin{bmatrix} e^{2\pi\xi\wedge k} (a_k)_s & e^{2\pi\xi\wedge k} (a_k)_v\\ e^{-2\pi\xi\wedge k} (a_k)_v & e^{-2\pi\xi\wedge k} (a_k)_s\end{bmatrix}\\
        = \sum_{k\in \mathbb{Z}^2}\begin{bmatrix}e^{2\pi\xi\wedge k} & 0 \\  0 & e^{-2\pi\xi\wedge k}\end{bmatrix}\begin{bmatrix} (a_k)_s &  (a_k)_v\\ (a_k)_v & (a_k)_s\end{bmatrix}
        = \sum_{k \in \mathbb{Z}^2} [e^{2\pi\xi\wedge k}][a_k].			 
    \end{align*}
    \endgroup
\end{proof}
By applying Lemma~\ref{lem:filterproduct} to each of the filters $\{m_{\eps}\}_{\eps=0}^{3}$ forming the wavelet matrix $U(\xi)$, we can write 
\begin{equation}\label{eqn:waveletmatrixvalued}
    U(\xi) = \sum_{k\in \mathbb{Z}^2}[e^{2\pi\xi \wedge k}] \ostar A_k
\end{equation}
where $A_k$ may be viewed either as an element of $\mathbb{R}_2^{8 \times 8}$ or an element of $\mathsf{S}^{4\times 4}$. In performing the multiplication $\ostar$, the spinor-vector matrix $[e^{2\pi \xi \wedge k}]$ is multiplied to each of the spinor-vector entries of $A_k$.

\subsection{Consistency}\label{sec:Qconsistency}

As an immediate consequence of its $\mathbb{Z}^2$-periodicity, the wavelet matrix $U(\xi)$ satisfies what we call \emph{consistency conditions}.

\begin{proposition}\label{thm:Qconsistencysigma}
    Let the wavelet matrix $U(\xi)$ be defined as in Definition~\ref{def:Qwaveletmatrix} and $V^2$ as in \eqref{eqn:defV2}. If $\sigma_0 := \mbox{diag}([1],[1],[1],[1]) = I_8$ and $\sigma_3 := \sigma_1 \sigma_2 = \sigma_2\sigma_1$ where
    \begin{equation*}
        \sigma_1 := \begin{bmatrix}
            [0] & [1] & [0] & [0] \\
            [1] & [0] & [0] & [0] \\
            [0] & [0] & [0] & [1] \\
            [0] & [0] & [1] & [0]
        \end{bmatrix}
        \text{~and~~}\,
        \sigma_2:= \begin{bmatrix}
            [0] & [0] & [1] & [0] \\
            [0] & [0] & [0] & [1] \\
            [1] & [0] & [0] & [0] \\
            [0] & [1] & [0] & [0]
        \end{bmatrix},	
    \end{equation*} then $U(\xi)$ satisfies the following equivalent statements.
    \begin{enumerate}[label=(\alph*)]
        \itemsep0em
        \item $U(\xi + \frac{v_j}{2}) = \sigma_jU(\xi)$ for all $\xi \in \mathbb{R}^2$ and for every $v_j \in V^2$.\label{thm:Qconsistencysigma1}
        \item $U(\xi + \frac{v_{1}}{2}) =\sigma_{1}U(\xi)$ and $U(\xi + \frac{v_{2}}{2}) =\sigma_{2}U(\xi)$ for all $\xi \in \mathbb{R}^2$.\label{thm:Qconsistencysigma2} 
    \end{enumerate}
\end{proposition}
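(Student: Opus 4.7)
The plan is to unpack the definition of $U(\xi)$ in Definition~\ref{def:Qwaveletmatrix} and exploit the $\mathbb{Z}^2$-periodicity of each filter $m_{\eps}$. Since each $m_{\eps}(\xi)=\sum_{k\in\mathbb{Z}^2} e^{2\pi\xi\wedge k}a_k^{\eps}$ and $v\wedge k \in \mathbb{Z}\,e_{12}$ whenever $v,k\in\mathbb{Z}^2$, the identity $e^{2\pi e_{12}n}=1$ for $n\in\mathbb{Z}$ immediately gives $m_{\eps}(\xi+z)=m_{\eps}(\xi)$ for every $z\in\mathbb{Z}^2$, and this periodicity transfers entrywise to the SV matrix $[m_{\eps}(\xi)]$.

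To prove \ref{thm:Qconsistencysigma1}, I fix $j\in\{0,1,2,3\}$ and inspect the $l$-th block-row of $U(\xi+\tfrac{v_j}{2})$: its entries are $[m_{\eps}(\xi+\tfrac{v_j+v_l}{2})]$ for $\eps=0,1,2,3$. Because $V^2$ is a complete set of coset representatives for $\mathbb{Z}^2/2\mathbb{Z}^2$, there is a unique $l'=\pi_j(l)\in\{0,1,2,3\}$ with $v_j+v_l-v_{l'}\in 2\mathbb{Z}^2$, so by $\mathbb{Z}^2$-periodicity $[m_{\eps}(\xi+\tfrac{v_j+v_l}{2})]=[m_{\eps}(\xi+\tfrac{v_{l'}}{2})]$. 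Hence the shift $\xi\mapsto \xi+\tfrac{v_j}{2}$ permutes the block-rows of $U(\xi)$ via the involution $\pi_j$. A routine case-by-case enumeration yields $\pi_0=\mathrm{id}$, $\pi_1=(0\,1)(2\,3)$, $\pi_2=(0\,2)(1\,3)$, and $\pi_3=(0\,3)(1\,2)$; comparison with the matrices in the statement confirms that left-multiplication by $\sigma_j$ realises exactly $\pi_j$, giving $U(\xi+\tfrac{v_j}{2})=\sigma_j U(\xi)$.

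The equivalence \ref{thm:Qconsistencysigma1}$\iff$\ref{thm:Qconsistencysigma2} is the easy part. The forward direction is immediate. For the converse, the $j=0$ case is trivial since $\sigma_0=I_8$, and for $j=3$, using $v_3=v_1+v_2$ and applying \ref{thm:Qconsistencysigma2} twice yields $U(\xi+\tfrac{v_3}{2})=\sigma_1 U(\xi+\tfrac{v_2}{2})=\sigma_1\sigma_2 U(\xi)=\sigma_3 U(\xi)$, consistent with the defining identity $\sigma_3=\sigma_1\sigma_2=\sigma_2\sigma_1$ (the commutation being a direct $4\times 4$ matrix calculation). The main obstacle is thus essentially book-keeping: correctly matching each permutation $\pi_j$ with its companion matrix $\sigma_j$; no deeper analytic difficulty arises.
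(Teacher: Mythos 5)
Your proof is correct and follows essentially the same route as the paper: the paper dismisses statement (a) as ``straightforward'' (you supply the permutation-of-block-rows details explicitly, and your matching of each $\pi_j$ with $\sigma_j$ checks out), and the equivalence (a)$\iff$(b) is handled identically, via $U(\xi+\tfrac{v_3}{2})=\sigma_1\sigma_2U(\xi)$ and the trivial $j=0$ case.
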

\begin{proof}
    Showing that the wavelet matrix satisfies \ref{thm:Qconsistencysigma1} is straightforward. Moreover, \ref{thm:Qconsistencysigma1}$\implies$\ref{thm:Qconsistencysigma2} directly holds. %Hence, to prove the equivalence of \ref{thm:Qconsistencysigma1} and \ref{thm:Qconsistencysigma2}, it suffices to show that \ref{thm:Qconsistencysigma2}$\implies$\ref{thm:Qconsistencysigma1}. 
    Suppose statement \ref{thm:Qconsistencysigma2} holds. Then $U(\xi + \frac{v_1}{2}+\frac{v_1}{2}) = U(\xi) = \sigma_0U(\xi)$. Moreover, since $\sigma_3 = \sigma_1 \sigma_2 = \sigma_2\sigma_1$, we also have 
    \begin{equation*}
        \sigma_3U(\xi )=U(\xi + {\textstyle\frac{v_3}{2}}) = U(\xi + {\textstyle\frac{v_1}{2}+\frac{v_2}{2}}) =\sigma_{1}U(\xi + \textstyle{\frac{v_2}{2}}) = \sigma_1 \sigma_2 U(\xi).
    \end{equation*}
\end{proof}

Observe also that the spinor-vector matrix of the filter $m_{\eps}$ is of the form $$[m_{\eps}(\xi)] = \begin{bmatrix}	(m(\xi))_s & (m(\xi))_v\\ (m(-\xi))_v & (m(-\xi))_s\end{bmatrix}$$ 
for each $\eps \in \{0,1,2,3\}$, and therefore satisfies $[m_{\eps}(-\xi)] = \tau_0 [m_{\eps}(\xi)]\tau_0^{-1} = \tau_0 [m_{\eps}(\xi)]\tau_0$ where $\tau_0= \begin{bmatrix} 0 & 1 \\ 1 &0 \end{bmatrix} = \tau_0^{-1}.$ As a consequence, the wavelet matrix $U(\xi)$ satisfies another consistency condition as given in the following proposition.

\begin{proposition}\label{thm:QconsistencySV}
    Let the wavelet matrix $U(\xi)$ be defined as in Definition~\ref{def:Qwaveletmatrix}. If $\tau_0:= \begin{bmatrix} 0 & 1 \\ 1 &0 \end{bmatrix}$ and $\tau = \tau_0 \otimes \tau_0 \otimes \tau_0 \otimes \tau_0 = \mbox{diag}(\tau_0,\tau_0,\tau_0,\tau_0)$, then $$U(-\xi) = \tau U(\xi)\tau.$$
\end{proposition}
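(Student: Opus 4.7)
The plan is to reduce the identity $U(-\xi)=\tau U(\xi)\tau$ to the blockwise relation $[m_\eps(-\eta)] = \tau_0 [m_\eps(\eta)]\tau_0$ that was already noted in the paragraph just before the proposition, and then to dispose of the shifts $v_j/2$ appearing inside $U(\xi)$ by invoking $\mathbb{Z}^2$-periodicity of each filter $m_\eps$.

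The first step is a bookkeeping observation. Since $\tau=\mathrm{diag}(\tau_0,\tau_0,\tau_0,\tau_0)$ acts as a block-diagonal matrix relative to the $4\times 4$ block structure on $U(\xi)$ (whose $(j,\eps)$-block is the $2\times 2$ SV matrix $[m_\eps(\xi+v_j/2)]$), the product $\tau U(\xi)\tau$ has $(j,\eps)$-block equal to $\tau_0 [m_\eps(\xi+v_j/2)]\tau_0$. Comparing with the $(j,\eps)$-block of $U(-\xi)$, which is $[m_\eps(-\xi+v_j/2)]$, the proposition reduces to showing that for every $\eps\in\{0,1,2,3\}$ and every $v_j\in V^2$,
$$[m_\eps(-\xi+{\textstyle\frac{v_j}{2}})] = \tau_0\,[m_\eps(\xi+{\textstyle\frac{v_j}{2}})]\,\tau_0.$$

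Second, I would apply the already-established identity $[m_\eps(-\eta)]=\tau_0[m_\eps(\eta)]\tau_0$ with the substitution $\eta=\xi+v_j/2$, obtaining
$$[m_\eps(-\xi-{\textstyle\frac{v_j}{2}})] = \tau_0\,[m_\eps(\xi+{\textstyle\frac{v_j}{2}})]\,\tau_0.$$
It then remains to verify $[m_\eps(-\xi+v_j/2)]=[m_\eps(-\xi-v_j/2)]$, which follows from $\mathbb{Z}^2$-periodicity of $m_\eps$: the trigonometric series $m_\eps(\xi)=\sum_{k\in\mathbb{Z}^2}e^{2\pi\xi\wedge k}a_k^\eps$ is invariant under $\xi\mapsto\xi+n$ for any $n\in\mathbb{Z}^2$ because $n\wedge k = e_{12}(n_1k_2-n_2k_1)$ is an integer multiple of $e_{12}$, so $e^{2\pi n\wedge k}=1$. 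Since $v_j\in\{0,1\}^2\subset\mathbb{Z}^2$, we have $-v_j/2\equiv v_j/2\pmod{\mathbb{Z}^2}$ via $-v_j/2+v_j=v_j/2$, and the required equality follows.

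There is no substantive obstacle here — the proof is essentially a two-line chase through block structure, the blockwise conjugation identity, and $\mathbb{Z}^2$-periodicity. The only point to be careful about is checking that $-v_j/2$ and $v_j/2$ really are congruent modulo $\mathbb{Z}^2$ for each of the four $v_j$'s, which is immediate once one writes $v_j\in\{0,1\}^2$. The argument mirrors the proof of Proposition~\ref{thm:Qconsistencysigma} in spirit: both propositions express a symmetry of $U$ under an action on $\xi$ as a fixed algebraic conjugation on the $8\times 8$ block.
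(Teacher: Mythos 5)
Your proof is correct and follows essentially the same route as the paper's (one-line) argument: reduce to the blockwise identity $[m_\eps(-\eta)]=\tau_0[m_\eps(\eta)]\tau_0$ applied to each $2\times 2$ block of the block-diagonal conjugation by $\tau$. Your explicit handling of the shifts $v_j/2$ via $\mathbb{Z}^2$-periodicity of the filters (using $-v_j/2\equiv v_j/2\pmod{\mathbb{Z}^2}$) fills in a detail the paper leaves implicit, and is a welcome addition.
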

\begin{proof}
    This follows from the fact that $\tau^{-1} = \tau$ and $[m_{\eps}(-\xi)] = \tau_0 [m_{\eps}(\xi)]\tau_0$ for each $\eps \in \{0,1,2,3\}$, which comprise $U(\xi)$.
\end{proof}

\subsection{Design conditions for the wavelet matrix}

The quaternionic wavelet design conditions that were introduced in Section~\ref{sec:Qwaveletdesignconditions} are all expressible in terms of the wavelet matrix.  In what follows, we define $\partial^{\alpha}U(\xi)$ as the spinor-vector block matrix of partial derivatives of the spinor-vector matrices of the filters comprising $U(\xi)$, i.e., %$\partial^{\alpha}U(\xi)$ is of the form
\begin{equation}\label{eqn:QpartialU}
    \resizebox{0.9\hsize}{!}{%
        $\partial^{\alpha}U(\xi):=\begin{bmatrix}
            \partial^{\alpha}[m_0(\xi) ]&\partial^{\alpha}[m_1(\xi)] & \partial^{\alpha}[m_2(\xi)] & \partial^{\alpha}[m_3(\xi)]\\
            \partial^{\alpha}[m_0(\xi+{\textstyle \frac{v_1}{2}})] & \partial^{\alpha}[m_1(\xi+{\textstyle \frac{v_1}{2}})] & \partial^{\alpha}[m_2(\xi+{\textstyle \frac{v_1}{2}})] & \partial^{\alpha}[m_3(\xi+{\textstyle \frac{v_1}{2}})] \\
            \partial^{\alpha}[m_0(\xi+{\textstyle \frac{v_2}{2}})] & \partial^{\alpha}[m_1(\xi+{\textstyle \frac{v_2}{2}})] & \partial^{\alpha}[m_2(\xi+{\textstyle \frac{v_2}{2}})] &\partial^{\alpha}[m_3(\xi+{\textstyle \frac{v_2}{2}})] \\
            \partial^{\alpha}[m_0(\xi+{\textstyle \frac{v_3}{2}})] & \partial^{\alpha}[m_1(\xi+{\textstyle \frac{v_3}{2}})] & \partial^{\alpha}[m_2(\xi+{\textstyle \frac{v_3}{2}})] &\partial^{\alpha}[m_3(\xi+{\textstyle \frac{v_3}{2}})]\\
        \end{bmatrix}.$}
\end{equation}  
The design criteria and the consistency conditions expressed in terms of $U(\xi)$ are summarized as follow.

\begin{theorem}\label{prop:Qwaveletdesignmatrix}
    Let $\eta\geq4$ be even, $P=\left(\frac{\eta-1}{2},\frac{\eta-1}{2}\right)$, $\varphi$ be a scaling function associated with an MRA for $L^2(\mathbb{R}^2,\mathbb{R}_2)$, and $\{\psi^{\eps}\}_{\eps=1}^{3}$ be the collection of wavelets corresponding to $\varphi$. Let $\sigma_1,\sigma_2$ be as in Proposition~\ref{thm:Qconsistencysigma} and $\tau$ be as in Proposition~\ref{thm:QconsistencySV}. Suppose $m_0$ is the scaling filter and $\{m_{\eps}\}_{\eps=1}^{3}$ are the wavelet filters that appear in the wavelet matrix $U(\xi)$ as in Definition~\ref{def:Qwaveletmatrix}. Then the following statements hold.
    \begin{enumerate}[label=(\alph*)]
        \itemsep0em
        \item If the integer shifts of $\psi^{\eps}$ ($\eps \in \{1,2,3\}$) form an orthonormal system, then $U(\xi)$ is unitary for almost every $\xi \in \mathbb{R}^2$.\label{prop:Qwaveletdesignmatrix1}
        \item If $\hat{\varphi}(0) = 1$, then $U(0) \in [1] \otimes \mathcal{U}^{3\times 3}$.\label{prop:Qwaveletdesignmatrix2}
        \item For $\eps \in \{1,2,3\}$, $\varphi$ and $\psi^{\eps}$ are compactly supported on $[0,\eta-1]^2$ if and only if $U(\xi)$ is a matrix-valued trigonometric polynomial of degree $\eta-1$ of the form 
        \begin{equation*}U(\xi) = \sum_{k\in Q_{\eta}^2} [e^{2\pi\xi\wedge k}]\ostar A_k \end{equation*} 
        where $A_k \in \mathsf{S}_2^{4 \times 4}$ for each $k\in Q_{\eta}^2$.\label{prop:Qwaveletdesignmatrix3}
        \item If $\psi^{\eps}$ has $\mu+1$ vanishing moments, then $$\partial^{\alpha}U(\xi)\big|_{\xi=0} \in \mathsf{S}\otimes\mathsf{S}^{3\times 3} \, \mbox{ for } 0\leq|\alpha|\leq \mu.$$ \label{prop:Qwaveletdesignmatrix4}
        \item $U(\xi + \frac{v_{1}}{2}) =\sigma_{1}U(\xi)$ and $U(\xi + \frac{v_{2}}{2}) =\sigma_{2}U(\xi)$ for all $\xi \in \mathbb{R}^2$.\label{prop:Qwaveletdesignmatrix5}
        \item $U(-\xi) = \tau U(\xi)\tau$ for all $\xi \in \mathbb{R}^2$.\label{prop:Qwaveletdesignmatrix6}
        \item $\varphi(x)=\varphi(2P-x)$ for all $x\in \mathbb{R}^2$ if and only if $$U(\xi)[0,0] + U(\xi)[0,1] = e^{4\pi\xi\wedge P}(U(-\xi)[0,0] + U(-\xi)[0,1])$$ for all $\xi \in \mathbb{R}^2$.\label{prop:Qwaveletdesignmatrix7}
    \end{enumerate}
\end{theorem}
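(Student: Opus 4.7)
The seven parts are direct translations of scalar or filter-level identities already proved in Section \ref{sec:Qwaveletdesignconditions} and in Section \ref{sec:Qconsistency} into block-matrix statements about $U(\xi)$, so I would dispatch them in turn. For (a), I would observe that the $\eps$-th column of $U(\xi)$ is the stacked tuple of SV blocks $([m_\eps(\xi+v_j/2)])_{j=0}^{3}$, so that the QQMF identities \eqref{eqn:QQMFscaling}--\eqref{eqn:QQMFwavelets} (Propositions \ref{prop:QQMFscaling} and \ref{prop:QQMFscalingwavelet}, combined with the fact that the shifts of $\varphi$ are automatically orthonormal under the MRA assumption) say precisely that the columns of $U(\xi)$ are orthonormal in the SV-block sense, i.e.\ $U(\xi)^{\ast}U(\xi) = I_8$ almost everywhere. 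For (b), the completeness relations in Proposition \ref{prop:Qcompleteness} ($m_0(0)=1$, $m_0(v_j/2) = 0$ for $j\geq 1$, and $m_\eps(0)=0$ for $\eps\geq 1$) annihilate every $(0,j)$ and $(j,0)$ SV block of $U(0)$ with $j\geq 1$ and leave $[1]$ in the $(0,0)$ slot; applying (a) at $\xi=0$ then forces the remaining $3\times 3$ sub-block to be unitary, giving $U(0) \in [1]\otimes \mathcal{U}^{3\times 3}$. For (c), Propositions \ref{prop:Qcompactscaling} and \ref{prop:Qcompactwavelet} identify compact support of $\varphi$ and $\psi^{\eps}$ on $[0,\eta-1]^2$ with $m_0$ and $m_\eps$ being trigonometric polynomials of degree $\eta-1$; Lemma \ref{lem:filterproduct} then yields the SV-block expansion of each shifted filter, and the phase $e^{\pi v_j\wedge k}$ picked up from the shift $\xi\mapsto\xi+v_j/2$ is absorbed into the aggregate SV-block coefficient $A_k$, producing the stated form \eqref{eqn:waveletmatrixvalued} restricted to $k\in Q_\eta^2$.

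For (d), I would invoke the regularity equivalences of Proposition \ref{prop:Qregularity}: the $\mu+1$ vanishing moments of each $\psi^\eps$ yield $\partial^\alpha m_\eps(0) = 0$ for $\eps\in\{1,2,3\}$ and $\partial^\alpha m_0(v_j/2) = 0$ for $j\in\{1,2,3\}$ whenever $0 \le |\alpha|\le\mu$. Inserted into the definition \eqref{eqn:QpartialU}, these vanishings eliminate every SV block in the first block-row and the first block-column of $\partial^\alpha U(0)$ apart from the $(0,0)$ corner, leaving a direct-sum matrix in $\mathsf{S}\otimes \mathsf{S}^{3\times 3}$. Parts (e) and (f) are immediate restatements: (e) is item (b) of Proposition \ref{thm:Qconsistencysigma}, and (f) is Proposition \ref{thm:QconsistencySV}, so both require only a citation.

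For (g), I would apply Theorem \ref{thm:Qsymmequiv2D} to reduce the point-symmetry condition $\varphi(x)=\varphi(2P-x)$ to the filter identity $m_0(\xi) = e^{4\pi\xi\wedge P}m_0(-\xi)$. Reading off the $(0,0)$ SV block of $U(\xi)$, which is $[m_0(\xi)]$, gives $U(\xi)[0,0] = (m_0(\xi))_s$ and $U(\xi)[0,1] = (m_0(\xi))_v$, so $U(\xi)[0,0]+U(\xi)[0,1] = m_0(\xi)$; the same identification at $-\xi$ converts the scalar filter equality into the stated matrix-entry identity. The only genuinely delicate step across the whole theorem is the bookkeeping in (d): one must verify that scalar vanishings $\partial^\alpha m_\eps = 0$ at a single point really propagate to the full SV block $\partial^\alpha [m_\eps(\cdot)] = [0]$ at that point, but this is transparent from Lemma \ref{lem:filterproduct} together with linearity of differentiation, so no heavy lifting beyond the already-proved filter-level propositions is needed.
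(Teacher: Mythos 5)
Your proposal is correct and follows essentially the same route as the paper, whose proof of this theorem is simply a list of citations to Propositions~\ref{prop:QQMFscaling}--\ref{prop:QQMFscalingwavelet}, \ref{prop:Qcompleteness}, \ref{prop:Qcompactscaling}--\ref{prop:Qcompactwavelet}, \ref{prop:Qregularity}, \ref{thm:Qconsistencysigma}--\ref{thm:QconsistencySV} and Theorem~\ref{thm:Qsymmequiv2D} for parts (a)--(g) respectively. You invoke exactly the same results, merely spelling out the block-level bookkeeping (column orthonormality for (a), the vanishing of the first block row and column for (d), and the identification $m_0(\xi)=U(\xi)[0,0]+U(\xi)[0,1]$ for (g)) that the paper leaves implicit.
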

\begin{proof}
    Statement~\ref{prop:Qwaveletdesignmatrix1} follows from Propositions~\ref{prop:QQMFscaling}--\ref{prop:QQMFscalingwavelet}; \ref{prop:Qwaveletdesignmatrix2} is deduced from Proposition~\ref{prop:Qcompleteness}; \ref{prop:Qwaveletdesignmatrix3} is  equivalent to  Propositions~\ref{prop:Qcompactscaling}--\ref{prop:Qcompactwavelet}; and \ref{prop:Qwaveletdesignmatrix4} is derived from Proposition~\ref{prop:Qregularity}. Statements \ref{prop:Qwaveletdesignmatrix5}--\ref{prop:Qwaveletdesignmatrix6} are just the consistency conditions from Propositions~\ref{thm:Qconsistencysigma}-- \ref{thm:QconsistencySV}, respectively. And finally, \ref{prop:Qwaveletdesignmatrix7} is the symmetry condition from Theorem~\ref{thm:Qsymmequiv2D}.
\end{proof}

\subsection{Discretisation by uniform sampling}

As in Proposition~\ref{prop:Qwaveletdesignmatrix}\ref{prop:Qwaveletdesignmatrix3}, the compact support condition allows us to write $U(\xi)$ as a matrix-valued trigonometric polynomial of degree $\eta-1$, i.e., 
\begin{equation}U(\xi) = \sum_{k\in Q_{\eta}^2} [e^{2\pi\xi\wedge k}] \ostar A_k.\label{eqn:Qsupportedwaveletmatrix}
\end{equation}
Such a polynomial has at most $\eta^2$ nonzero coefficients. Hence, it is completely determined by $\eta^2$ distinct samples.

Furthermore, the wavelet matrix $U(\xi)$ is $\mathbb{Z}^2$-periodic in $L^2([0,1]^2,\mathsf{S}^{4\times 4})$. We define a \emph{sampling operator} $D_{\eta}^2: C([0,1]^2,\mathsf{S}^{4\times 4}) \to (\mathsf{S}^{4\times 4})^{Q_{\eta}^2}$ by
\begin{equation}
    D_{\eta}^2(U(\xi)) = \left(U_j:=U({\textstyle\frac{j}{\eta}})\right)_{j \in Q_{\eta}^2}.
\end{equation}
Here, $C([0,1]^2,\mathsf{S}^{4\times 4})$ is the collection of continuous functions from $[0,1]^2$ to $(\mathsf{S}^{4\times 4})^{Q_{\eta}^2}$ and $(\mathsf{S}^{4\times 4})^{Q_{\eta}^2}$ is the collection of \emph{matrix ensembles} (not necessarily attached to a trigonometric polynomial) that can be indexed by $Q_{\eta}^2$. That is, 
\begin{equation}\label{eqn:Qsetofensemble}
    (\mathsf{S}^{4\times 4})^{Q_{\eta}^2} = \left\{\vec{B}=(B_j)_{j \in Q_{\eta}^2} \, : \, B_j \in \mathsf{S}^{4\times 4}\right\}.
\end{equation}
The output of sampling operator $D_\eta^2$ is a matrix ensemble $\vec{U}:=(U_j)_{j \in Q_{\eta}^2}$ which contains the desired $\eta^2$ samples of $U(\xi)$. Henceforth, we refer to  $\vec{U}=(U_j)_{j \in Q_{\eta}^2}$ as the \emph{standard ensemble of samples} (or \emph{standard samples}) of $U(\xi)$. Observe that the ensemble $\vec{A}:=(A_k)_{k \in Q_{\eta}^2}$ of coefficients in \eqref{eqn:Qsupportedwaveletmatrix} also lives in $(\mathsf{S}^{4\times 4})^{Q_{\eta}^2}$.

Moreover, the entries in the matrix ensemble $\vec{U}=(U_j)_{j \in Q_{\eta}^2}$ are of the form \begin{equation}\label{eqn:Qsamples}
    U_j = \sum_{k\in Q_{\eta}^2} [e^{2\pi j\wedge k/\eta}]\ostar A_k
\end{equation} 
for each $j\in Q_{\eta}^2$. We can relate the ensemble $\vec{U}=(U_j)_{j \in Q_{\eta}^2}$ of samples and the ensemble $\vec{A} = (A_k)_{k \in Q_{\eta}^2}$ of coefficients using an appropriate discrete Fourier transform.

\subsubsection*{Discrete Fourier transform}

We now define an appropriate discrete Fourier transform that acts on matrix ensembles in $(\mathsf{S}^{4\times 4})^{Q_{\eta}^2}$.
\begin{definition}\label{def:QFourierU}
    For any $\vec{B} = (B_k)_{k \in Q_{\eta}^2} \in  (\mathsf{S}^{4\times 4})^{Q_{\eta}^2}$, we define the \emph{discrete Fourier transform} $\mathcal{F}_{\eta}:(\mathsf{S}^{4\times 4})^{Q_{\eta}^2}\to (\mathsf{S}^{4\times 4})^{Q_{\eta}^2}$ by
    \begin{equation}
        (\mathcal{F}_{\eta} \vec{B})_j = \sum_{k\in Q_{\eta}^2}[e^{2\pi j\wedge k/\eta}]\ostar B_k.
    \end{equation}
\end{definition}
Notice that evaluating the wavelet matrix $U(\xi)$ in \eqref{eqn:Qsupportedwaveletmatrix} at each of the sample points in $(\frac{j}{\eta})_{j\in Q_{\eta}^2}$ yields equation
\eqref{eqn:Qsamples}. In view of the discrete Fourier transform, we may write $\vec{U} = (U_j)_{j\in Q_{\eta}^2}$ as $\vec{U} = \mathcal{F}_{\eta}\vec{A}$. In the following proposition, we provide the appropriate inversion of $\mathcal{F}_{\eta}$.

\begin{proposition}
    Let $\mathcal{F}_{\eta}$ be as in Definition~\eqref{def:QFourierU}. The inverse of $\mathcal{F}_{\eta}$ is given by
    \begin{equation}\label{eqn:QinverseFourierU}
        (\mathcal{F}_{\eta}^{-1}{\vec{C}})_k=\dfrac{1}{\eta^2}\sum_{j \in Q_{\eta}^2} [e^{2\pi k\wedge j/\eta}]\ostar C_j 
    \end{equation}
    where $\vec{C} = (C_j)_{j \in Q_{\eta}^2} \in  (\mathsf{S}^{4\times 4})^{Q_{\eta}^2}$.
\end{proposition}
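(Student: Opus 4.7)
The plan is to verify directly that the composition $\mathcal{F}_{\eta}^{-1}\mathcal{F}_{\eta}$ is the identity on $(\mathsf{S}^{4\times 4})^{Q_{\eta}^{2}}$, by substituting Definition~\ref{def:QFourierU} into the proposed inversion formula and reducing the double sum via a discrete orthogonality relation. Given the finite-dimensional, linear nature of $\mathcal{F}_{\eta}$, establishing one-sided invertibility will suffice.

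First, I would fix $\vec{B}\in(\mathsf{S}^{4\times 4})^{Q_{\eta}^{2}}$ and compute
$$(\mathcal{F}_{\eta}^{-1}\mathcal{F}_{\eta}\vec{B})_{k}=\frac{1}{\eta^{2}}\sum_{j\in Q_{\eta}^{2}}[e^{2\pi k\wedge j/\eta}]\ostar\!\left(\sum_{\ell\in Q_{\eta}^{2}}[e^{2\pi j\wedge\ell/\eta}]\ostar B_{\ell}\right).$$
Since the $\ostar$ operation is entry-wise left multiplication on each SV block of its right-hand argument, associativity of matrix multiplication gives $[M_{1}]\ostar([M_{2}]\ostar A)=([M_{1}][M_{2}])\ostar A$; this lets me interchange the two summations and consolidate the product of exponential SV matrices. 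Using the diagonal form $[e^{2\pi\xi\wedge k}]=\mathrm{diag}(e^{2\pi\xi\wedge k},e^{-2\pi\xi\wedge k})$ established in the proof of Lemma~\ref{lem:filterproduct}, together with the bilinearity and alternating property of the wedge product ($k\wedge j+j\wedge\ell=(k-\ell)\wedge j$), I obtain
$$[e^{2\pi k\wedge j/\eta}]\,[e^{2\pi j\wedge\ell/\eta}]=[e^{2\pi(k-\ell)\wedge j/\eta}].$$

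Next, I would establish the orthogonality identity
$$\sum_{j\in Q_{\eta}^{2}}[e^{2\pi(k-\ell)\wedge j/\eta}]=\eta^{2}\,\delta_{k\ell}\,[1]\qquad(k,\ell\in Q_{\eta}^{2}).$$
Writing $n=k-\ell$ and expanding the wedge product as $n\wedge j=e_{12}(n_{1}j_{2}-n_{2}j_{1})$, each diagonal entry factors as a product of two one-dimensional geometric sums,
$$\Bigl(\textstyle\sum_{j_{1}=0}^{\eta-1}e^{-2\pi e_{12}n_{2}j_{1}/\eta}\Bigr)\Bigl(\textstyle\sum_{j_{2}=0}^{\eta-1}e^{2\pi e_{12}n_{1}j_{2}/\eta}\Bigr),$$
which (via the isomorphism $\Lambda_{0}\oplus\Lambda_{2}\cong\mathbb{C}$ mentioned in the notation section) is the classical DFT orthogonality: each factor equals $\eta$ when the respective component of $n$ is $0\pmod{\eta}$ and $0$ otherwise. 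Because $k,\ell\in Q_{\eta}^{2}$ forces $n\equiv 0\pmod{\eta}$ to be equivalent to $k=\ell$, the identity follows. The same reasoning handles the other diagonal entry (the sign flip does not affect which terms vanish), so the full SV matrix equals $\eta^{2}\delta_{k\ell}[1]$.

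Substituting this back collapses the sum over $\ell$ and yields $(\mathcal{F}_{\eta}^{-1}\mathcal{F}_{\eta}\vec{B})_{k}=B_{k}$, which completes the proof. The main obstacle is not conceptual but bookkeeping: one must carefully track the interplay between the scalar exponentials, the $2\times 2$ SV matrices $[e^{2\pi\xi\wedge k}]$ they produce, and the $\ostar$-action on SV-block matrices, and then recognize the resulting sum as a rotated version of the classical two-dimensional DFT orthogonality. Once these pieces are aligned, the computation is essentially the standard one.
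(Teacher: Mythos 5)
Your proposal is correct and follows essentially the same route as the paper: both verify that $\mathcal{F}_{\eta}^{-1}\mathcal{F}_{\eta}$ is the identity by interchanging the double sum, consolidating the exponential SV matrices via the bilinearity and antisymmetry of the wedge product, and reducing to the classical two-dimensional DFT orthogonality after writing $n\wedge j$ in components. The only cosmetic difference is your explicit remark that one-sided invertibility suffices in finite dimensions, which the paper leaves implicit.
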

\begin{proof}
    Let $\vec{B} = (B_k)_{k \in Q_{\eta}^2}$ and $\vec{C} = \mathcal{F}_{\eta}\vec{B}$, i.e., $C_j =\sum_{k\in Q_{\eta}^2}[e^{2\pi j\wedge k/\eta}]\ostar B_k$. It suffices to show that  $\mathcal{F}_{\eta}^{-1}\vec{C} = \vec{B}$. Now, by expansion and by using the definition of the wedge product we have
    \begin{align}
        (\mathcal{F}_{\eta}^{-1}\vec{C})_{\ell} &= \frac{1}{\eta^2}\sum_{j\in Q_{\eta}^2}[e^{2\pi \ell\wedge j/\eta}]\ostar C_j \nonumber
        = \frac{1}{\eta^2}\sum_{j\in Q_{\eta}^2}[e^{2\pi \ell\wedge j/\eta}]\ostar\left( \sum_{k\in Q_{\eta}^2}[e^{2\pi j\wedge k/\eta}]\ostar B_k \right)\nonumber\\
        &= \sum_{k\in Q_{\eta}^2} \frac{1}{\eta^2} \sum_{j\in Q_{\eta}^2} [e^{2\pi \ell\wedge j/\eta}] [e^{2\pi j\wedge k/\eta}]\ostar B_k 
        = \sum_{k\in Q_{\eta}^2} \left(\frac{1}{\eta^2}  \sum_{j\in Q_{\eta}^2} [e^{2\pi (\ell-k)\wedge j/\eta}]\right)\ostar B_k.\label{eqn:inverseQ}
    \end{align}
    By direct calculation we have $\frac{1}{\eta^2}\sum_{j\in Q_{\eta}^2}e^{2\pi(\ell-k)\wedge j/\eta} = \delta_{\ell-k}$ 
    %Indeed, using the fact that $x \wedge y = e_{12}\langle x, \mathcal{R}_{-\pi/2}(y) \rangle$ where $\mathcal{R}_{-\pi/2}$ denotes the {rotation by $-\pi/2$ radians} about the origin in $\mathbb{R}^2$, we obtain
 %   \begingroup
   % \allowdisplaybreaks
    %\begin{align*}
      %  \frac{1}{\eta^2}\sum_{j\in Q_{\eta}^2}e^{2\pi(\ell-k)\wedge j/\eta} &= \frac{1}{\eta^2}\sum_{j\in Q_{\eta}^2}e^{2\pi\langle \ell-k, \mathcal{R}_{-\pi/2}(j)\rangle}
       %=\frac{1}{\eta^2}\sum_{j_1=0}^{\eta-1}\sum_{j_2=0}^{\eta-1}e^{2\pi(\ell_1-k_1)j_2/\eta + 2\pi e_{12}(\ell_2-k_2)(-j_1)/\eta}\\
       % &= \frac{1}{\eta}\sum_{j_1=0}^{\eta-1}\left(\frac{1}{\eta}\sum_{j_2=0}^{\eta-1} e^{2\pi e_{12}(\ell_1-k_1)j_2/\eta}\right)e^{2\pi e_{12}(\ell_2-k_2)(-j_1)/\eta}
        %= \frac{1}{\eta}\sum_{j_1=0}^{\eta-1}(\delta_{\ell_1-k_1})e^{2\pi e_{12}(\ell_2-k_2)(-j_1)\eta}\\
      %  =\delta_{\ell_1-k_1}\delta_{\ell_2-k_2}=\delta_{\ell-k}.
    %\end{align*}
    %\endgroup
  %  Similarly, $\frac{1}{\eta^2}\sum_{j\in Q_{\eta}^2}e^{2\pi(\ell-k)\wedge (-j)/\eta} = \delta_{\ell-k}$. 
 and therefore, 
    $\frac{1}{\eta^2}\sum_{j\in Q_{\eta}^2}[e^{2\pi(\ell-k)\wedge j/\eta}]=\delta_{\ell-k}I_2.$ Combining this with \eqref{eqn:inverseQ} yields $(\mathcal{F}_{\eta}^{-1}\vec{C})_{\ell} = \sum_{k\in Q_{\eta}^2}\delta_{\ell-k}B_k = B_{\ell.}$
\end{proof}

In summary, if $\vec{A}=(A_k)_{k\in Q_{\eta}^2}$ is the ensemble of matrix coefficients in \eqref{eqn:Qsupportedwaveletmatrix}, then the inverse Fourier transform gives $\vec{A} = \mathcal{F}_{\eta}^{-1}\vec{U}$. Thus, we are able to establish a connection between the ensemble of samples $\vec{U}$ of $U(\xi)$ and ensemble of coefficients $\vec{A}$ of $U(\xi)$.

\begin{comment}
    \begin{proposition}
        If $U(\xi) = \sum_{k\in Q_{\eta}^2} [e^{2\pi\xi\wedge k}] \ostar A_k$ is a matrix-valued trigonometric polynomial, $\vec{U} = (U_j)_{j\in Q_{\eta}^2}$ and $\vec{A}=(A_k)_{k\in Q_{\eta}^2}$, then
        \begin{equation}
            \vec{U} = \mathcal{F}_{\eta}\vec{A} \text{~and~} \vec{A} = \mathcal{F}_{\eta}^{-1}\vec{U}.
        \end{equation}
    \end{proposition}
    \begin{proof}
        The proof follows from the preceding discussion.
    \end{proof}
\end{comment}

At this point, we can also explicitly state the relationship between the coefficients of the (scaling or wavelet) filter $m_{\eps}$ ($\eps \in \{0,1,2,3\}$) and its standard samples. This immediately follows from the connection established between the standard ensemble of samples and the ensemble of coefficients of $U(\xi)$.

\begin{remark}\label{rem:Qfiltersamples}
    Let $\eta\geq4$ be even, $\varphi$ be a scaling function associated with an MRA for $L^2(\mathbb{R}^2,\mathbb{R}_2)$, and $\{\psi^{\eps}\}_{\eps=1}^{3}$ be the collection of wavelets corresponding to $\varphi$. Suppose $m_0$ is the scaling filter and $\{m_{\eps}\}_{\eps=1}^{3}$ are the wavelet filters that appear in the wavelet matrix $U(\xi) = \sum_{k\in Q_{\eta}^2}[e^{2\pi\xi\wedge k}]\ostar A_k$. For $1\leq \eps \leq 3$,
    $$m_{\eps}({\textstyle\frac{j}{\eta}}) = \sum_{k\in Q_{\eta}^2} e^{2\pi j\wedge k/\eta}a_k^{\eps} \text{~~and~~} a_k^{\eps} = \frac{1}{\eta^2}\sum_{j\in Q_{\eta}^2} e^{2\pi k\wedge j/\eta}m_{\eps}({\textstyle\frac{j}{\eta}}).$$
\end{remark}

\subsubsection*{Other ensembles of samples}
In some instances, we need to obtain other samples of $U(\xi)$ apart from the standard ensemble. Leveraging the properties of quaternion Fourier transform, we can represent these additional ensembles of samples in terms of the standard ensemble.

\begin{definition}\label{def:Qmodulation}
    For a fixed $v_{\ell} \in V^2$ as in \eqref{eqn:defV2}, we define a \emph{modulation operator} $\chi_{\ell}: (\mathsf{S}^{4\times 4})^{Q_{\eta}^2} \to (\mathsf{S}^{4\times 4})^{Q_{\eta}^2}$ by
    \begin{equation*}
        (\chi_{\ell} \vec{B})_k = [e^{\pi v_{\ell} \wedge k/\eta}]\ostar B_k.
    \end{equation*}
\end{definition}

\begin{proposition}
    Let $v_{\ell} \in V^2\backslash\{v_0\}$ and $\vec{U}=(U_k)_{k\in Q_{\eta}^2}$ be the standard ensemble of samples of $U(\xi)$. If $\mathcal{F}_{\eta}$ is as in Definition~\ref{def:QFourierU} and $\chi_{\ell}$ is as in Definition~\ref{def:Qmodulation}, then $$\left(U\left({\textstyle \frac{j+v_{\ell}/2}{\eta}} \right)\right)_{j\in Q_{\eta}^2}  =  (\mathcal{F}_{\eta}\chi_{\ell}\mathcal{F}_{\eta}^{-1}\vec{U})_j\quad (j\in Q_\eta^2).$$
\end{proposition}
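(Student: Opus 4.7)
My plan is to chain together the definitions of $U(\xi)$ as a trigonometric polynomial, the modulation operator $\chi_\ell$, and the discrete Fourier pair $(\mathcal{F}_\eta,\mathcal{F}_\eta^{-1})$ via a direct computation. The strategy is to first rewrite the right-hand side by applying $\mathcal{F}_\eta^{-1}$ to $\vec{U}$ to recover the coefficient ensemble $\vec{A}=(A_k)_{k\in Q_\eta^2}$ of $U(\xi)=\sum_{k\in Q_\eta^2}[e^{2\pi\xi\wedge k}]\ostar A_k$ (this is exactly the identity $\vec{A}=\mathcal{F}_\eta^{-1}\vec{U}$ noted right before the statement). Then I would apply $\chi_\ell$ entrywise and, finally, apply $\mathcal{F}_\eta$ and check the result matches the sampled wavelet matrix at the shifted nodes $(j+v_\ell/2)/\eta$.

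The core calculation is to evaluate
\begin{equation*}
U\!\left(\frac{j+v_\ell/2}{\eta}\right) \;=\; \sum_{k\in Q_\eta^2}\bigl[e^{2\pi(j+v_\ell/2)\wedge k/\eta}\bigr]\ostar A_k.
\end{equation*}
Since $\wedge$ is bilinear, $(j+v_\ell/2)\wedge k = j\wedge k + (v_\ell/2)\wedge k$, and both wedge values lie in $\Lambda_2\subset\Lambda_0\oplus\Lambda_2\cong\mathbb{C}$, which is commutative. Hence the scalar exponential factors as $e^{2\pi j\wedge k/\eta}\,e^{\pi v_\ell\wedge k/\eta}$. Because the SV matrix of any element of $\Lambda_0\oplus\Lambda_2$ is diagonal, the corresponding SV matrices commute and satisfy $[e^{2\pi(j+v_\ell/2)\wedge k/\eta}]=[e^{2\pi j\wedge k/\eta}]\,[e^{\pi v_\ell\wedge k/\eta}]$.

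Substituting and using the definition of $\ostar$ (the scalar SV matrix acts entrywise on each block of $A_k$) gives
\begin{equation*}
U\!\left(\frac{j+v_\ell/2}{\eta}\right) = \sum_{k\in Q_\eta^2}[e^{2\pi j\wedge k/\eta}]\ostar\bigl([e^{\pi v_\ell\wedge k/\eta}]\ostar A_k\bigr) = \sum_{k\in Q_\eta^2}[e^{2\pi j\wedge k/\eta}]\ostar (\chi_\ell\vec{A})_k,
\end{equation*}
which by Definition~\ref{def:QFourierU} equals $(\mathcal{F}_\eta\chi_\ell\vec{A})_j=(\mathcal{F}_\eta\chi_\ell\mathcal{F}_\eta^{-1}\vec{U})_j$, completing the proof.

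The computation itself is essentially routine once the factorisation of the exponential is in hand; the only point demanding a little care is justifying that the ordered product $[e^{2\pi j\wedge k/\eta}]\,[e^{\pi v_\ell\wedge k/\eta}]\ostar A_k$ can be regrouped as $[e^{2\pi j\wedge k/\eta}]\ostar([e^{\pi v_\ell\wedge k/\eta}]\ostar A_k)$. This is where the diagonal (scalar) structure of SV matrices arising from $\Lambda_0\oplus\Lambda_2$ is essential, because it guarantees that left-multiplication by these scalar SV factors is associative with the entrywise $\ostar$ action on the blocks of $A_k$, and commutes freely past the quaternionic entries. Beyond this observation, the proof is a one-line string of equalities.
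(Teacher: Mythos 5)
Your proof is correct and follows essentially the same line as the paper's: expand $U((j+v_\ell/2)/\eta)$ via the coefficient ensemble $\vec{A}=\mathcal{F}_\eta^{-1}\vec{U}$, factor the SV matrix of the exponential as $[e^{2\pi j\wedge k/\eta}][e^{\pi v_\ell\wedge k/\eta}]$, regroup under $\ostar$ to recognise $(\chi_\ell\vec{A})_k$, and apply $\mathcal{F}_\eta$. The extra care you take in justifying the factorisation and the regrouping (via commutativity in $\Lambda_0\oplus\Lambda_2$ and the diagonal structure of those SV matrices) is a sound elaboration of steps the paper performs silently.
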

\begin{proof}
    For a fixed $v_{\ell} \in V^2$, we have 
    \begingroup
    \allowdisplaybreaks
    \begin{align*}
        U\left({\textstyle \frac{j+v_{\ell}/2}{\eta}} \right) &= \sum_{k\in Q_{\eta}^2} [e^{2\pi(j+v_{\ell}/2)\wedge k/\eta}] \ostar A_k
        = \sum_{k\in Q_{\eta}^2} [e^{2\pi j\wedge k/\eta}][e^{\pi v_{\ell} \wedge k/\eta}] \ostar A_k\\
        &= \sum_{k\in Q_{\eta}^2} [e^{2\pi j\wedge k/\eta}]\ostar\left([e^{\pi v_{\ell} \wedge k/\eta}] \ostar A_k\right)
        = \sum_{k\in Q_{\eta}^2} [e^{2\pi j\wedge k/\eta}]\ostar(\chi_{\ell}\vec{A})_k\\
        & = \sum_{k\in Q_{\eta}^2} [e^{2\pi j\wedge k/\eta}]\ostar(\chi_{\ell}\mathcal{F}_{\eta}^{-1}\vec{U})_k
        = (\mathcal{F}_{\eta}\chi_{\ell}\mathcal{F}_{\eta}^{-1}\vec{U})_j		
    \end{align*}
    \endgroup
    for each $j \in Q_{\eta}^2$. 
\end{proof}
Notice now that for a fixed $v_{\ell} \in V^2$, we know that $\vec{U}^{(\ell)}:=\mathcal{F}_{\eta}\chi_{\ell} \mathcal{F}_{\eta}^{-1}\vec{U}$ is an ensemble of $\eta^2$ samples of $U(\xi)$ obtained from $\vec{U}$ via $U_{j}^{(\ell)} = U(\frac{j+v_{\ell/2}}{\eta})$. In particular, $\vec{U}^{(0)} = \vec{U}$ is the standard ensemble. Computing  $\mathcal{F}_{\eta}\chi_{\ell} \mathcal{F}_{\eta}^{-1}\vec{U}$ for each $v_{\ell} \in V^2$ generates a total of $(2\eta)^2$ samples, including the standard ones. Collecting all such samples is equivalent to generating the samples $(U(\frac{j}{2\eta}))_{j\in Q_{2\eta}^2}$. Hence, the composition $\mathcal{F}_{\eta}\chi_{\ell} \mathcal{F}_{\eta}^{-1}$ (for $v_{\ell} \in V^2\backslash \{v_0\}$) allows us to obtain other samples of $U(\xi)$ using only our knowledge of the standard ensemble $\vec{U}$. 

\subsubsection*{Discretized design conditions}

In dealing with samples of $U(\xi)$, we note that by the $\mathbb{Z}^2$-periodicity of $U(\xi)$, if $k=(k_1,k_2)\in\mathbb{Z}^2$ but $k\notin Q_{\eta}^2$ then we interpret $U_{(k_1,k_2)}$ as $U_{(k_1\Mod{\eta},(k_2\Mod{\eta})}$.

The next theorem writes the design criteria outlined in Theorem~\ref{prop:Qwaveletdesignmatrix} in terms of the standard ensemble of samples.
\begin{theorem}\label{prop:Qdiscretewaveletdesignmatrix}
    Let $\eta \geq 4$ be even, $P=\left(\frac{\eta-1}{2},\frac{\eta-1}{2}\right)$, $U(\xi) = \sum_{k\in Q_{\eta}^2} [e^{2\pi\xi\wedge k}] \ostar A_k$ be a matrix-valued trigonometric polynomial as in \eqref{eqn:Qsupportedwaveletmatrix} where $(A_k)_{k\in Q_{\eta}^2} \subset \mathsf{S}_2^{4 \times 4}$, and $\vec{U}:=(U_j = U(\frac{j}{\eta}))_{j \in Q_{\eta}^2}$. Let $\sigma_1,\sigma_2$ be as in Proposition~\ref{thm:Qconsistencysigma} and $\tau$ be as in Proposition~\ref{thm:QconsistencySV}. Then the following statements hold.
    \begin{enumerate}[label=(\alph*)]
        \itemsep0em
        \item $U(\xi)$ is unitary everywhere if and only if the samples in $\{\vec{U}^{(\ell)} = \mathcal{F}_{\eta}\chi_{\ell} \mathcal{F}_{\eta}^{-1}\vec{U}\}_{\ell = 0}^{3}$ are all unitary.\label{prop:Qdiscretewaveletdesignmatrix1}
        \item If $\hat{\varphi}(0) = 1$, then $U_0 \in [1] \otimes \mathcal{U}^{3\times 3}$.\label{prop:Qdiscretewaveletdesignmatrix2}
        \item If the wavelets $\{\psi^{\eps}\}_{\eps=1}^{3}$ have $\mu+1$ vanishing moments, then the following are equivalent:
        \begin{enumerate}[label=(\roman*)]
            \itemsep0em
            \item $\partial^{\alpha}U(\xi)\big|_{\xi=0} \in \mathsf{S}\otimes\mathsf{S}^{3\times 3}$ for $0\leq|\alpha|\leq \mu$;\label{prop:Qdiscretewaveletdesignmatrix3a}
            \item $\sum_{k\in Q_{\eta}^2} [k_2^{\alpha_1} k_1^{\alpha_2}]\ostar A_k \in \mathsf{S}\otimes\mathsf{S}^{3\times 3} \, \mbox{ for } 0\leq|\alpha|\leq \mu$; and\label{prop:Qdiscretewaveletdesignmatrix3b}
            \item $\sum_{j\in Q_{\eta}^2} [c_{\alpha j}]\ostar U_j \in \mathsf{S}\otimes\mathsf{S}^{3\times 3}  \, \mbox{ for } 0\leq|\alpha|\leq \mu$, \label{prop:Qdiscretewaveletdesignmatrix3c}  
        \end{enumerate}
        where $[c_{\alpha j}]=\sum_{k\in Q_{\eta}^2}[k_2^{\alpha_1}k_1^{\alpha_2}][e^{2\pi k\wedge j/\eta}]$.\label{prop:Qdiscretewaveletdesignmatrix3}
        \item The following statements are equivalent:
        \begin{enumerate}[label=(\roman*)]
            \itemsep0em
            \item $U(\xi + \frac{v_{1}}{2}) =\sigma_{1}U(\xi)$ and $U(\xi + \frac{v_{2}}{2}) =\sigma_{2}U(\xi)$ for all $\xi \in \mathbb{R}^2$;\label{prop:Qdiscretewaveletdesignmatrix4a}
            \item $\sigma_{1}A_k = (-1)^{k_2}A_k$ and $\sigma_{2}A_k = (-1)^{k_1}A_k$ for all $k=(k_1,k_2) \in  Q_{\eta}^2$; and \label{prop:Qdiscretewaveletdesignmatrix4b}
            \item $U_{j + \eta v_{1}/2} =\sigma_{1}U_{j}$ and $U_{j + \eta v_{2}/2} =\sigma_{2}U_{j}$ for all $j \in Q_{\eta}^2$.\label{prop:Qdiscretewaveletdesignmatrix4c}		
        \end{enumerate}\label{prop:Qdiscretewaveletdesignmatrix4}
        \item The following statements are equivalent:\label{prop:Qdiscretewaveletdesignmatrix5}
        \begin{enumerate}[label=(\roman*)]
            \itemsep0em
            \item $U(-\xi) = \tau U(\xi)\tau$ for all $\xi \in \mathbb{R}^2$;\label{prop:Qdiscretewaveletdesignmatrix5a}
            \item $A_k = \tau A_k \tau$ for all $k\in Q_{\eta}^2$; and \label{prop:Qdiscretewaveletdesignmatrix5b}
            \item $U_{\eta v_3-j} = \tau U_j \tau$ for all $j\in Q_{\eta}^2$.\label{prop:Qdiscretewaveletdesignmatrix5c}
        \end{enumerate}
        \item The following statements are equivalent: \label{prop:Qdiscretewaveletdesignmatrix6}
        \begin{enumerate}[label=(\roman*)]
            \itemsep0em
            \item $\varphi(x)=\varphi(2P-x)$ for all $x\in \mathbb{R}^2$; \label{prop:Qdiscretewaveletdesignmatrix6a0}
            \item $U(\xi)[0,0] + U(\xi)[0,1] = e^{4\pi\xi\wedge P}(U(-\xi)[0,0] + U(-\xi)[0,1])$ for all $\xi \in \mathbb{R}^2$;\label{prop:Qdiscretewaveletdesignmatrix6a}
            \item $A_k[0,0] + A_k[0,1] = A_{2P-k}[0,0]+A_{2P-k}[0,1]$ for all $k\in Q_{\eta}^2$; and \label{prop:Qdiscretewaveletdesignmatrix6b}
            \item $U_j[0,0] + U_{j}[0,1] = e^{4\pi j\wedge P/\eta}(U_{\eta v_3-j}[0,0] + U_{\eta v_3-j}[0,1])$.\label{prop:Qdiscretewaveletdesignmatrix6c}
        \end{enumerate}
    \end{enumerate}
\end{theorem}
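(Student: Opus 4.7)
The plan is to dispatch the six parts by exploiting throughout the bijective correspondence $U(\xi) = \sum_{k \in Q_\eta^2}[e^{2\pi \xi \wedge k}] \ostar A_k$ with $\vec{A} = \mathcal{F}_\eta^{-1} \vec{U}$, together with the linear independence of the quaternionic exponentials $\{e^{2\pi \xi \wedge k} : k \in Q_\eta^2\}$ on a fundamental domain. For each three-way equivalence in \ref{prop:Qdiscretewaveletdesignmatrix3}--\ref{prop:Qdiscretewaveletdesignmatrix6} the template is uniform: translate a pointwise condition on $U(\xi)$ into a coefficient-level condition on the $A_k$ by matching exponentials, then translate to a sample-level condition on the $U_j$ via the inversion formula $A_k = \frac{1}{\eta^2}\sum_{j\in Q_\eta^2}[e^{2\pi k \wedge j/\eta}] \ostar U_j$.

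For \ref{prop:Qdiscretewaveletdesignmatrix1} the forward implication is immediate since every $U_j^{(\ell)}$ is a value of $U(\xi)$. For the converse, $U(\xi)^\ast U(\xi) - I$ is an $\mathsf{S}^{4\times 4}$-valued trigonometric polynomial whose frequencies lie in $\{k-k' : k,k'\in Q_\eta^2\}\subseteq \{-(\eta-1),\ldots,\eta-1\}^2$; such a polynomial is determined by its values on the $(2\eta)^2$-grid $\{j/(2\eta) : j \in Q_{2\eta}^2\}$, and these points are exactly those accessed by the ensembles $\vec{U}^{(\ell)} = \mathcal{F}_\eta \chi_\ell \mathcal{F}_\eta^{-1}\vec{U}$ for $\ell \in \{0,1,2,3\}$. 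Part \ref{prop:Qdiscretewaveletdesignmatrix2} is immediate from \ref{prop:Qwaveletdesignmatrix2} since $U_0 = U(0)$.

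For \ref{prop:Qdiscretewaveletdesignmatrix3}, term-by-term differentiation of the expansion at $\xi=0$, using $\xi\wedge k = e_{12}(\xi_1 k_2 - \xi_2 k_1)$, gives $\partial^\alpha[e^{2\pi\xi\wedge k}]|_{\xi=0} = (2\pi e_{12})^{|\alpha|}(-1)^{\alpha_2}[k_2^{\alpha_1}k_1^{\alpha_2}]$, a nonzero scalar multiple of $[k_2^{\alpha_1}k_1^{\alpha_2}]$; the SV-block structure in (i) is therefore preserved, giving (i)$\Leftrightarrow$(ii), while (ii)$\Leftrightarrow$(iii) follows by substituting the inversion formula into (ii) and interchanging sums to recover the stated $[c_{\alpha j}]$. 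Parts \ref{prop:Qdiscretewaveletdesignmatrix4} and \ref{prop:Qdiscretewaveletdesignmatrix5} follow the same template: a consistency identity on $\mathbb{R}^2$ expands, by linear independence, into a coefficient-level relation (using $e^{\pi v_1 \wedge k} = (-1)^{k_2}$ and $e^{\pi v_2 \wedge k} = (-1)^{k_1}$ for \ref{prop:Qdiscretewaveletdesignmatrix4}, and the conjugation identity $[e^{-2\pi\xi\wedge k}]=\tau[e^{2\pi\xi\wedge k}]\tau$ for \ref{prop:Qdiscretewaveletdesignmatrix5}), and the sample-level equivalent is obtained by evaluating at $\xi = j/\eta$ with indices taken modulo $\eta$.

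For \ref{prop:Qdiscretewaveletdesignmatrix6} the crucial observation is that, by the layout of the SV matrix, $U(\xi)[0,0] + U(\xi)[0,1] = (m_0(\xi))_s + (m_0(\xi))_v = m_0(\xi)$ and similarly $A_k[0,0] + A_k[0,1] = a_k^0$; consequently (i)$\Leftrightarrow$(ii) is \ref{prop:Qwaveletdesignmatrix7}, (ii)$\Leftrightarrow$(iii) reduces to the elementary fact that $m_0(\xi) = e^{4\pi\xi\wedge P}m_0(-\xi)$ is equivalent to the palindromic coefficient identity $a_k^0 = a_{2P-k}^0$, and (iii)$\Leftrightarrow$(iv) is the standard Fourier duality at the level of the filter $m_0$. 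The main obstacle is part \ref{prop:Qdiscretewaveletdesignmatrix1}: one needs to argue carefully that the frequencies of $U^\ast U$ genuinely remain within $\{-(\eta-1),\ldots,\eta-1\}^2$ and that the denser $(2\eta)^2$-grid is a uniqueness set for such polynomials, which in turn requires accounting for the action of the four modulation operators $\chi_\ell$ on $\vec{U}$. Every other equivalence, once the SV-block bookkeeping is arranged, reduces to linear independence of exponentials and inversion of $\mathcal{F}_\eta$.
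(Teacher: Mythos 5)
Your proposal follows essentially the same route as the paper's proof: the same explicit expansion of $U(\xi)^{\ast}U(\xi)$ as a trigonometric polynomial with frequencies in $\{1-\eta,\ldots,\eta-1\}^2$ resolved by its values on the $(2\eta)^2$-grid for part (a), the same term-by-term differentiation plus Fourier inversion for parts (c)--(e), and the same identification $m_0(\xi)=U(\xi)[0,0]+U(\xi)[0,1]$ reducing part (f) to Theorem~\ref{thm:Qsymmequiv2D} and a palindromic coefficient identity. The only slip is that $\partial^{\alpha}[e^{2\pi\xi\wedge k}]\big|_{\xi=0}$ equals $\mathrm{diag}\big((2\pi e_{12})^{|\alpha|}(-1)^{\alpha_2},(2\pi e_{12})^{|\alpha|}(-1)^{\alpha_1}\big)\ostar[k_2^{\alpha_1}k_1^{\alpha_2}]$ rather than a single scalar multiple of $[k_2^{\alpha_1}k_1^{\alpha_2}]$, but since this prefactor is an invertible diagonal spinor matrix, membership in $\mathsf{S}\otimes\mathsf{S}^{3\times 3}$ is still preserved in both directions and the equivalence in (c) is unaffected.
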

\begin{proof}
    Let $U(\xi) = \sum_{k\in Q_{\eta}^2} [e^{2\pi\xi\wedge k}] \ostar A_k$ be a trigonometric polynomial.
    
    \ref{prop:Qdiscretewaveletdesignmatrix1}: The forward implication is straightforward because if $U(\xi)$ is unitary everywhere, then all samples in  $\{\vec{U}^{(\ell)} = \mathcal{F}_{\eta}\chi_{\ell} \mathcal{F}_{\eta}^{-1}\vec{U}\}_{\ell = 0}^{3}$ are unitary. For the backward direction, suppose that all samples in $\{\vec{U}^{(\ell)} = \mathcal{F}_{\eta}\chi_{\ell} \mathcal{F}_{\eta}^{-1}\vec{U}\}_{\ell = 0}^{3}$ are unitary. Letting  $J_{\eta}^2 = \{1-\eta, 2-\eta, \ldots, 0,1,\ldots,\eta-1\}^2$, we first note that
    \begingroup
    \allowdisplaybreaks
    \begin{align}
        U(\xi)^{\ast}U(\xi)	&= \left(\sum_{j\in Q_{\eta}^2} A_j^{\ast} \ostar [e^{-2\pi \xi \wedge j}]\right) \left(\sum_{k\in Q_{\eta}^2} [e^{2\pi \xi \wedge k}]\ostar A_k \right)\nonumber\\
        &=\sum_{k\in Q_{\eta}^2} \left(\sum_{j\in Q_{\eta}^2}  A_j^{\ast} \ostar [e^{-2\pi \xi \wedge j}][e^{2\pi \xi \wedge k}]\right)A_k \nonumber
        =\sum_{k\in Q_{\eta}^2}\sum_{j\in Q_{\eta}^2}  [e^{2\pi \xi \wedge (k-j)}] \ostar \left(A_j^{\ast} A_k \right) \nonumber\\
        &=\sum_{\ell \in J_{\eta}^2} \sum_{j\in Q_{\eta}^2} [e^{2\pi \xi \wedge \ell}] \ostar \left(A_j^{\ast} A_{\ell+j} \right) %\nonumber
        =\sum_{\ell \in J_{\eta}^2}  [e^{2\pi \xi \wedge \ell}] \ostar B_{\ell}\label{eqn:Qdiscretewaveletdesignmatrix2}
    \end{align}
    \endgroup
    where $B_{\ell} = \sum_{j\in Q_{\eta}^2}\left(A_j^{\ast} A_{\ell+j} \right)$. By the unitarity of all samples in $\cup_{\ell=0}^3\{\vec{U}^{(\ell)} = \mathcal{F}_{\eta}\chi_{\ell} \mathcal{F}_{\eta}^{-1}\vec{U}\}=(U(\frac{j}{2\eta}))_{j\in Q_{2\eta}^2}$, we have 
    \begin{equation*}
        I_8 = U({\textstyle \frac{j}{2\eta}})^{\ast} U({\textstyle \frac{j}{2\eta}}) = \sum_{\ell\in J_{\eta}^2} [e^{2\pi j \wedge \ell/(2\eta)}] \ostar B_{\ell}
    \end{equation*}
    for each $j \in Q_{2\eta}^2$. %Since $e^{2\pi j \wedge \ell/(2\eta)} = e^{-2\pi e_{12}\langle\mathcal{R}_{-\pi/2}(j),\ell \rangle/(2\eta)}$, 
    By orthonormality and completeness of the Fourier basis $\{e_{\ell}\}_{\ell\in Q_{2\eta}^2}$ (where $e_{\ell}(j) =  e^{-2\pi e_{12}j\wedge \ell/(2\eta)} $) in $\ell_2(J_{\eta}^2,\mathbb{R}_2)$ we deduce that $B_{\ell} = \delta_{\ell}I_8$. Combining this with \eqref{eqn:Qdiscretewaveletdesignmatrix2}, we conclude that $U(\xi)^{\ast} U(\xi) = I_8$ for every $\xi \in \mathbb{R}^2$.
    
    \ref{prop:Qdiscretewaveletdesignmatrix2}: This is the consistency condition proved in Proposition~\ref{prop:Qcompleteness}. 
    
    \ref{prop:Qdiscretewaveletdesignmatrix3}: With the definition of $\partial^{\alpha}U(\xi)$ in \eqref{eqn:QpartialU}, we may write 
    \begingroup
    \allowdisplaybreaks
    \begin{align}
        \partial^{\alpha}U(\xi) &= \sum_{k\in Q_{\eta}^2} \partial^{\alpha}[e^{2\pi\xi\wedge k}] \ostar A_k \nonumber\\
        &=\sum_{k\in Q_{\eta}^2} \begin{bmatrix}(2\pi e_{12})^{|\alpha|}k_2^{\alpha_1}(-k_1)^{\alpha_2}e^{2\pi\xi\wedge k}&0\\0&(2\pi e_{12})^{|\alpha|}(-k_2)^{\alpha_1}k_1^{\alpha_2}e^{-2\pi\xi\wedge k} \end{bmatrix}\ostar A_k\nonumber\\
        &=\sum_{k\in Q_{\eta}^2} \begin{bmatrix}(2\pi e_{12})^{|\alpha|}(-1)^{\alpha_2}&0\\0&(2\pi e_{12})^{|\alpha|}(-1)^{\alpha_1} \end{bmatrix}[k_2^{\alpha_1}k_1^{\alpha_2}e^{2\pi\xi\wedge k}]\ostar A_k \nonumber\\
        &=\begin{bmatrix}(2\pi e_{12})^{|\alpha|}(-1)^{\alpha_2}&0\\0&(2\pi e_{12})^{|\alpha|}(-1)^{\alpha_1} \end{bmatrix} \ostar \sum_{k\in Q_{\eta}^2} [k_2^{\alpha_1}k_1^{\alpha_2}e^{2\pi\xi\wedge k}]\ostar A_k \label{eqn:QpartialUexplicit}
    \end{align}
    \endgroup
    for each $0\leq|\alpha|\leq \mu$. Evaluating \eqref{eqn:QpartialUexplicit} at $\xi=0$, we deduce that for each $0\leq|\alpha|\leq \mu$
    \begin{align*}
        \partial^{\alpha}U(\xi)&|_{\xi=0}=\begin{bmatrix}(2\pi e_{12})^{|\alpha|}(-1)^{\alpha_2}&0\\0&(2\pi e_{12})^{|\alpha|}(-1)^{\alpha_1} \end{bmatrix} \ostar \sum_{k\in Q_{\eta}^2} [k_2^{\alpha_1}k_1^{\alpha_2}]\ostar A_k \in \mathsf{S}\otimes\mathsf{S}^{3\times 3}\\
        &\iff \sum_{k\in Q_{\eta}^2} [k_2^{\alpha_1}k_1^{\alpha_2}]\ostar A_k \in \mathsf{S}\otimes\mathsf{S}^{3\times 3}.\end{align*} This proves that \ref{prop:Qdiscretewaveletdesignmatrix3a} $\iff$ \ref{prop:Qdiscretewaveletdesignmatrix3b}. 
    
    Now suppose $\sum_{k\in Q_{\eta}^2} [k_2^{\alpha_1}k_1^{\alpha_2}] \ostar A_k \in \mathsf{S}\otimes\mathsf{S}^{3\times 3}$
    for all $0\leq|\alpha|\leq \mu$. Since $\vec{A} = \mathcal{F}_{\eta}^{-1}\vec{U}$, we have
    \begingroup\allowdisplaybreaks
 $$ \sum_{k\in Q_{\eta}^2} [k_2^{\alpha_1}k_1^{\alpha_2}] \ostar A_k = \sum_{k\in Q_{\eta}^2} [k_2^{\alpha_1}k_1^{\alpha_2}] \ostar \left({\textstyle \dfrac{1}{\eta^2}}\sum_{j \in Q_{\eta}^2} [e^{2\pi k\wedge j/\eta}]\ostar U_j \right)
       % &= {\textstyle \dfrac{1}{\eta^2}}\sum_{j \in Q_{\eta}^2}\left( \sum_{k\in Q_{\eta}^2}[k_2^{\alpha_1}k_1^{\alpha_2}][e^{2\pi k\wedge j/\eta}]\right)\ostar U_j \\
        = {\textstyle \dfrac{1}{\eta^2}}\sum_{j \in Q_{\eta}^2} [c_{
            \alpha j}]\ostar U_j$$
    where $[c_{
        \alpha j}]=\sum_{k\in Q_{\eta}^2}[k_2^{\alpha_1}k_1^{\alpha_2}][e^{2\pi k\wedge j/\eta}]$. We conclude that \ref{prop:Qdiscretewaveletdesignmatrix3b}  $\implies$ \ref{prop:Qdiscretewaveletdesignmatrix3c}. A similar computation involving the discrete Fourier transform proves the converse.
    \endgroup
    
    \ref{prop:Qdiscretewaveletdesignmatrix4}: Suppose $U(\xi)$ satisfies $U(\xi + \frac{v_{1}}{2}) =\sigma_{1}U(\xi)$. Then we have 
    \begin{align*}
        U({\textstyle \xi + \frac{v_1}{2}}) = \sum_{k\in Q_{\eta}^2}[e^{2\pi\xi \wedge k}(-1)^{k_2}]\ostar A_k = \sum_{k\in Q_{\eta}^2}[e^{2\pi\xi \wedge k}]\ostar ((-1)^{k_2}A_k).
    \end{align*}
    On the other hand, $\sigma_{1}U(\xi) = \sum_{k\in Q_{\eta}^2}[e^{2\pi\xi \wedge k}]\ostar \sigma_1A_k$. Comparing the coefficient matrices in $U({\textstyle \xi + \frac{v_1}{2}})$ and $\sigma_{1}U(\xi)$ gives $\sigma_{1}A_k = (-1)^{k_2}A_k$. By a similar argument, it can be shown that $\sigma_{2}A_k = (-1)^k_{1}A_k$ and so we conclude that \ref{prop:Qdiscretewaveletdesignmatrix4a}  $\implies$ \ref{prop:Qdiscretewaveletdesignmatrix4b}. The converse is proved similarly. 
    
    Suppose now that $\sigma_{1}A_k = (-1)^{k_2}A_k$. Then
    \begingroup \allowdisplaybreaks
    \begin{align*}
        U_{j + \frac{\eta v_1}{2}} &= \sum_{k\in Q_{\eta}^2} [e^{2\pi(j + \eta v_1/2) \wedge k/\eta}]\ostar A_k
        = \sum_{k\in Q_{\eta}^2} [e^{2\pi j\wedge k/\eta}(-1)^{k_2}]\ostar A_k\\
        &= \sum_{k\in Q_{\eta}^2} [e^{2\pi j\wedge k/\eta}]\ostar ((-1)^{k_2}A_k)
        = \sum_{k\in Q_{\eta}^2} [e^{2\pi j\wedge k/\eta}]\ostar \sigma_1A_k\\
        &= \sigma_1\left(\sum_{k\in Q_{\eta}^2} [e^{2\pi j\wedge k/\eta}]\ostar A_k\right) = \sigma_1U_j
    \end{align*}
    \endgroup
    for each $j\in Q_{\eta}^2$. Similarly, it can be shown that $U_{j + \eta v_{2}/2} =\sigma_{2}U_{j}$ for each $j\in Q_{\eta}^2$. Thus, \ref{prop:Qdiscretewaveletdesignmatrix4b}  $\implies$ \ref{prop:Qdiscretewaveletdesignmatrix4c}. A similar calculation yields the converse. 
    
    \ref{prop:Qdiscretewaveletdesignmatrix5}: Suppose $U(-\xi) = \tau U(\xi)\tau$ for all $\xi \in \mathbb{R}^2$. Recall that $\tau = \tau_0 \otimes \tau_0 \otimes \tau_0 \otimes \tau_0$ where $\tau_0= \begin{bmatrix} 0 & 1 \\ 1 &0 \end{bmatrix}$ as in Proposition~\ref{thm:QconsistencySV}. We claim that $\tau_0 [e^{2\pi \xi \wedge k}] =  [e^{-2\pi \xi \wedge k}] \tau_0$. Indeed, 
    $$\begin{bmatrix} 0 & 1 \\ 1 &0 \end{bmatrix}\begin{bmatrix} e^{2\pi \xi \wedge k} & 0 \\ 0 & e^{-2\pi \xi \wedge k} \end{bmatrix} = \begin{bmatrix} 0 & e^{-2\pi \xi \wedge k} \\ e^{2\pi \xi \wedge k} &0 \end{bmatrix} = \begin{bmatrix} e^{-2\pi \xi \wedge k} & 0 \\ 0 & e^{2\pi \xi \wedge k} \end{bmatrix}\begin{bmatrix} 0 & 1 \\ 1 &0 \end{bmatrix}.$$
    Consequently,
    $\tau \ostar [e^{2\pi \xi \wedge k}] =  [e^{-2\pi \xi \wedge k}] \ostar \tau$. Thus,
    $$\tau U(\xi) \tau = \tau \left(\sum_{k\in Q_{\eta}^2}[e^{2\pi\xi \wedge k}]\ostar A_k\right) \tau = \sum_{k\in Q_{\eta}^2} [e^{-2\pi\xi \wedge k}]\ostar \tau A_k \tau.$$
    Comparing the coefficients of $\tau U(\xi) \tau$ with those of $U(-\xi) = \sum_{k\in Q_{\eta}^2}[e^{-2\pi\xi \wedge k}]\ostar A_k$, we conclude that $A_k = \tau A_k \tau$ for all $k\in Q_{\eta}^2$. Hence \ref{prop:Qdiscretewaveletdesignmatrix5a}$\implies$\ref{prop:Qdiscretewaveletdesignmatrix5b}. A similar calculation yields the converse. 
    
    Suppose now that $A_k = \tau A_k \tau$ for all $k\in Q_{\eta}^2$. Then
    \begin{align*}
        U_{\eta v_3-j} &= \sum_{k\in Q_{\eta}^2} [e^{2\pi (\eta v_3-j)\wedge k/\eta}]\ostar A_k
        = \sum_{k\in Q_{\eta}^2} [e^{-2\pi j\wedge k/\eta}]\ostar A_k\\
        &= \sum_{k\in Q_{\eta}^2} [e^{-2\pi j\wedge k/\eta}]\ostar \tau A_k \tau
        = \tau \left(\sum_{k\in Q_{\eta}^2} [e^{2\pi j\wedge k/\eta}]\ostar A_k\right) \tau= \tau U_{j}\tau
    \end{align*}
    for each $j\in Q_{\eta}^2$, and so we conclude that \ref{prop:Qdiscretewaveletdesignmatrix5b}  $\implies$ \ref{prop:Qdiscretewaveletdesignmatrix5c}. The converse is proved by a similar computation.
    
    \ref{prop:Qdiscretewaveletdesignmatrix6}: We first note that by definition of the quaternionic wavelet matrix $U(\xi)$, we have $$m_0(\xi) = \sum_{k\in Q_{\eta}^2}e^{2\pi \xi \wedge k}a_k^0 = U(\xi)[0,0] + U(\xi)[0,1].$$ Moreover, the equivalence of statements \ref{prop:Qdiscretewaveletdesignmatrix6a0} and \ref{prop:Qdiscretewaveletdesignmatrix6a} has already been established in Theorem~\ref{thm:Qsymmequiv2D}. Thus, we only need to prove that \ref{prop:Qdiscretewaveletdesignmatrix6a}, \ref{prop:Qdiscretewaveletdesignmatrix6b} and \ref{prop:Qdiscretewaveletdesignmatrix6c} are equivalent. 
    
    To show that \ref{prop:Qdiscretewaveletdesignmatrix6a}$\iff$\ref{prop:Qdiscretewaveletdesignmatrix6b}, observe that
    \begin{align*}
        m_0(\xi) = e^{4\pi\xi\wedge P}m_0(-\xi) \iff \sum_{k\in Q_{\eta}^2}e^{2\pi \xi \wedge k}a_k^0 &=  e^{4\pi\xi\wedge P}\sum_{k\in Q_{\eta}^2}e^{-2\pi \xi \wedge k}a_k^0
        =\sum_{k\in Q_{\eta}^2}e^{2\pi \xi \wedge (2P-k)}a_k^0\\
        &=\sum_{k\in Q_{\eta}^2}e^{2\pi \xi \wedge k}a_{2P-k}^0,
    \end{align*}
    which is equivalent to $a_k^0 = A_k[0,0] + A_k[0,1] = A_{2P-k}[0,0]+A_{2P-k}[0,1] = a_{2P-k}^0$ for all $k\in Q_{\eta}^2$. 
    
    In showing \ref{prop:Qdiscretewaveletdesignmatrix6b} $\iff$ \ref{prop:Qdiscretewaveletdesignmatrix6c}, the forward direction is proved by appealing to the equivalence of \ref{prop:Qdiscretewaveletdesignmatrix6a} and \ref{prop:Qdiscretewaveletdesignmatrix6b}, followed by standard sampling. For the converse, recall that $m_0(\frac{j}{\eta}) = U_j[0,0] + U_{j}[0,1]$ and so the premise may be rewritten as $m_0(\frac{j}{\eta}) = e^{4\pi j\wedge P/\eta}m_0(\frac{\eta v_3-j}{\eta})$. Combining this with Remark~\ref{rem:Qfiltersamples}, we obtain
    \begin{align*}
        a_k^0 &= \frac{1}{\eta^2}\sum_{j\in Q_{\eta}^2} e^{2\pi k \wedge j/\eta}m_0({\textstyle\frac{j}{\eta}}) = \frac{1}{\eta^2}\sum_{j\in Q_{\eta}^2} e^{2\pi k \wedge j/\eta} e^{4\pi j\wedge P/\eta}m_0({\textstyle\frac{\eta v_3-j}{\eta}})\\
        &=\frac{1}{\eta^2}\sum_{j\in Q_{\eta}^2}e^{2\pi j\wedge (2P-k)/\eta}m_0({\textstyle\frac{\eta v_3-j}{\eta}}) = \frac{1}{\eta^2}\sum_{j\in Q_{\eta}^2}e^{2\pi (2P-k)\wedge j/\eta}m_0({\textstyle\frac{j}{\eta}})= a_{2P-k}^0
    \end{align*} 
    for each $k\in Q_{\eta}^2$. Therefore, $a_k^0 = A_k[0,0] + A_k[0,1] = A_{2P-k}[0,0]+A_{2P-k}[0,1] = a_{2P-k}^0$ for all $k\in Q_{\eta}^2$.
\end{proof}

\section{The quaternionic wavelet feasibility problem}\label{sec:Qwaveletfeasibility}

In this section, we formally state the quaternionic wavelet feasibility problem. Specifically, we define the associated constraint sets and use them to give a precise mathematical formulation of the problem.

\subsection{Relevant Hilbert modules}

We first identify the Hilbert module that provides an appropriate setting for the quaternionic wavelet feasibility problem. Throughout this section, for a quaternion $q\in\mathbb{R}_2$, we write $q=q^{(0)}+q^{(1)}e_1+q^{(2)}e_2+q^{(3)}e_{12}$ where $q^{(0)}$ denotes the real part of $q$ and $q^{(1)}, q^{(2)}, q^{(3)}$ denote the imaginary parts of $q$. The real part of $q$ is also written as $[q]_0$, i.e., $q^{(0)}= [q]_0$. In this section, we clarify the appropriate inner product that makes the set $(\mathsf{S}^{4\times 4})^{Q_{\eta}^2}$ of matrix ensembles a Hilbert module.

\subsubsection*{Hilbert modules of quaternion vectors and matrices}

For a fixed $d\in\mathbb{N}$, we consider $\mathbb{R}_2^d$ as a right  $\mathbb{R}_2$-module. That is, for all $v\in \mathbb{R}_2^d$ and $a,b \in\mathbb{R}_2$, the scalar multiplication obeys the rule $v(ab) = (va)(b)$. For any $u=(u_k)_{k=1}^d,v=(v_k)_{k=1}^d \in \mathbb{R}_2^d$, we define the $\mathbb{R}_2$-valued right inner product $\langle \cdot,\cdot \rangle_{\mathbb{R}_2}$ on $\mathbb{R}_2^d$ by $$\langle u,v\rangle_{\mathbb{R}_2} = \sum_{k=1}^d \overline{u_k}v_k.$$
With this inner product, we say $u$ and $v$ are orthonormal if and only if $\langle u,v\rangle_{\mathbb{R}_2} =0$. 

Alternatively, we can regard $\mathbb{R}_2^d$ as a real inner product space with $\langle \cdot, \cdot\rangle_{\mathbb{R}} : \mathbb{R}_2^d \times \mathbb{R}_2^d \to \mathbb{R}$ defined by 
\begin{equation}\label{eqn:QrealinnerproductV}
    \langle u,v\rangle_{\mathbb{R}} = \sum_{\eps=0}^3\sum_{k=1}^d \langle u_k^{(\eps)},v_k^{(\eps)} \rangle.
\end{equation}
It is directly verifiable that $\langle u,v\rangle_{\mathbb{R}} = [\langle u,v\rangle_{\mathbb{R}_2}]_0$. 
Note that $[\langle u,v\rangle_{\mathbb{R}_2}]_0$ may also be computed by (a quaternionic extension of) the polarisation identity, namely
\begin{equation}\label{eqn:Qpolarization}
    [\langle u,v\rangle_{\mathbb{R}_2}]_0 = \frac{1}{4} \left(\|u+v\|^2 - \|u-v\|^2\right),
\end{equation}
where $\|\cdot\|$ is the norm induced by $\langle \cdot,\cdot \rangle_{\mathbb{R}_2}$. 

For matrices $A,B \in \mathbb{R}_2^{d \times d}$, the \emph{Frobenius inner product} $\langle A,B \rangle_F$ of $A$ and $B$ is defined by $\langle A,B \rangle_F =\Tr(A^{\ast}B) = \sum_{i,k=1}^{d}\overline{A_{ik}}B_{ik}$. The {Frobenius matrix norm} is given by $$\|A\|_F^2 = \sum_{i,k=1}^{d}|A_{ik}|^2 = \sum_{\eps=0}^3\sum_{i,k=1}^{d}\big(A_{ik}^{(\eps)}\big)^2.$$

\subsubsection*{Hilbert module of matrix ensembles}

Let $\vec{U}=(U_j)_{j \in Q_{\eta}^2}, \vec{V}=(V_j)_{j \in Q_{\eta}^2} \in (\mathsf{S}^{4\times 4})^{Q_{\eta}^2}$. We  view $(\mathsf{S}^{4\times 4})_{Q_{\eta}^2}$ as a Hilbert module over $\mathbb{R}$, and equip $(\mathsf{S}^{4\times 4})^{Q_{\eta}^2}$ with the $\mathbb{R}$-valued inner product $\langle \cdot,\cdot \rangle_{\mathbb{R}}$ given by 
\begin{equation}
    \langle \vec{U},\vec{V} \rangle_{\mathbb{R}} = \sum_{\eps=0}^3\sum_{j \in Q_{\eta}^2} \left(\langle  U_j^{(\eps)}, V_j^{(\eps)}\rangle\right).
\end{equation}
The choice of the real scalar field is preferred as we seek to employ optimization algorithms that are well-studied in Hilbert spaces over $\mathbb{R}$.

\subsection{Feasibility problem for quaternionic wavelets}

In order to further simplify our construction, we only work on the Hilbert module $(\mathsf{S}^{4\times 4})_{\sigma\tau}^{Q_{\eta}^2}$ (for $\eta \geq 4$) which we define to be the set of all matrix ensembles that satisfy the consistency conditions, i.e.,
\begin{align}\label{eqn:QHilbert}
    (\mathsf{S}^{4\times 4})_{\sigma\tau}^{Q_{\eta}^2} &:= \bigg\{\vec{B} = (B_j)_{j\in Q_{\eta}^2} \in (\mathsf{S}^{4\times 4})^{Q_{\eta}^2}  :  B_{j + \eta v_{i}/2} =\sigma_{i}B_{j}\ \  (i\in \{1,2\}), \nonumber \\ 
    &\qquad\qquad  \mbox{~and~} B_{\eta v_3-j} = \tau B_j \tau, \mbox{~for all~} j \in Q_{\eta}^2 \bigg\}
\end{align}
where $\sigma_1,\sigma_2$ are defined as in Proposition~\ref{thm:Qconsistencysigma} and $\tau$ is as given in Proposition~\ref{thm:QconsistencySV}. Note that the standard ensemble $\vec{U}$ of samples of $U(\xi)$ belongs to $(\mathsf{S}^{4\times 4})_{\sigma\tau}^{Q_{\eta}^2}$, but the coefficient ensemble $\vec{A}$ lives outside that Hilbert module. We are now ready to state the feasibility problem formulation for quaternionic wavelet construction in the context of this Hilbert module.

\begin{problem}[Quaternionic wavelet feasibility problem]\label{prob:Qwaveletfeasibilityproblem} Let $\eta \geq 4$ be even and $\mathcal{H} := (\mathsf{S}^{4\times 4})_{\sigma\tau}^{Q_{\eta}^2}$. For $1\leq \ell \leq 3$, define the constraint sets
    \vspace{-0.2cm}
    
    \begingroup
    \allowdisplaybreaks
    \resizebox{0.965\textwidth}{!}{%
        \begin{minipage}{1.1\textwidth}
            \begin{align}
                C_1^{(0)} &:= \{\vec{U}\in \mathcal{H} \, : \, U_0 \in [1] \otimes \mathcal{U}^{3\times 3} \mbox{ and } U_j \in \mathcal{U}^{4\times 4}\, \mbox{ for } j\in Q_{\eta/2}^2\backslash \{0\}\},\label{prob:Qwaveletfeasibilityproblem1}\\
                C_1^{(\ell)} &:=\{\vec{U} \in \mathcal{H}\,: \, (\mathcal{F}_{\eta}\chi_{\ell} \mathcal{F}_{\eta}^{-1}\vec{U})_j \in \mathcal{U}^{4\times 4} \mbox{ for } j\in Q_{\eta/2}^2 \},\label{prob:Qwaveletfeasibilityproblem2}\\
                C_2 &:= \{\vec{U} \in \mathcal{H} \, : \, {\textstyle\sum_{j\in Q_{\eta}^2} [c_{\alpha j}]\ostar U_j}\in \mathsf{S}\otimes\mathsf{S}^{3\times 3} \mbox{ for }1\leq |\alpha|\leq \mu\},\label{prob:Qwaveletfeasibilityproblem3}\\	
                C_3 &:= \{\vec{U} \in \mathcal{H} \,:\, (U_j[0,0] + U_{j}[0,1]) = e^{4\pi j\wedge P/\eta}(U_{\eta v_3-j}[0,0] + U_{\eta v_3-j}[0,1]) \forall j \in Q_{\eta}^2\}\label{prob:Qwaveletfeasibilityproblem4}
            \end{align}
    \end{minipage}}
    \endgroup 
    \vspace{0.2cm}
    
    \noindent where $[c_{\alpha j}]=\sum_{k\in Q_{\eta}^2}[k_2^{\alpha_1}k_1^{\alpha_2}][e^{2\pi k\wedge j/\eta}]$. 
    \begin{enumerate}[label=(\alph*)]
        \itemsep0em
        \item To construct a smooth, orthonormal and compactly supported scaling function, find $\vec{U} \in \left(\bigcap_{\ell=0}^{3} C_1^{(\ell)}\right) \cap C_2$.\label{prob:QwaveletfeasibilityproblemA}
        \item To construct a smooth, orthonormal and compactly supported scaling function with point symmetry property, find $\vec{U} \in \left(\bigcap_{\ell=0}^{3} C_1^{(\ell)}\right) \cap C_2 \cap C_3$.\label{prob:QwaveletfeasibilityproblemB}
    \end{enumerate}
\end{problem} 

Note that the requirement of compact support is implicit in the fact that we are working on discrete matrix ensembles. Moreover, the constraint set $C_1^{(0)}$ imposes the completeness condition coupled with unitarity of the original ensemble. For $1\leq \ell \leq 3$, $C_1^{(\ell)}$ captures the unitarity at the other ensembles of samples. $C_2$ represents the vanishing moments condition, and $C_3$ is the point symmetry constraint.

We apply Pierra's product space reformulation to rewrite Problem~\ref{prob:Qwaveletfeasibilityproblem} as a two-set feasibility problem. In particular, Problem~\ref{prob:Qwaveletfeasibilityproblem}\ref{prob:QwaveletfeasibilityproblemA} is equivalent to finding a point on the intersection of the sets 
\begin{align} 
    C &:=C_1^{(0)} \times C_1^{(1)}\times C_1^{(2)}\times C_1^{(3)} \times C_2,\label{eqn:QwaveletprodC}\\
    \text{and~~} D &:= \{(\vec{U}_0, \vec{U}_1,\ldots, \vec{U}_4) \in \mathcal{H}^{5}: \vec{U}_0 = \vec{U}_1= \cdots =\vec{U}_{4}\}. \label{eqn:QwaveletprodD}
\end{align}
We abuse notation by using $C$ and $D$ to denote the appropriate Pierra's product reformulation of Problem~\ref{prob:Qwaveletfeasibilityproblem}\ref{prob:QwaveletfeasibilityproblemB}.

To avoid interrupting the main exposition, the explicit formulas for the projection operators onto the constraint sets $C_1^{(\ell)}$, $1\le \ell \le 3$, $C_2$ and $C_3$ of the quaternionic wavelet feasibility problem are deferred to \ref{app1}.

\subsection{Generating wavelets from feasibility problem solutions}

When a case of the quaternionic wavelet feasibility problem in Problem~\ref{prob:Qwaveletfeasibilityproblem} is solved, we end up having a standard ensemble of samples $$\vec{U}=(U_j)_{Q_{\eta}^2} \in \left(\bigcap_{\ell=0}^{3} C_1^{(\ell)}\right) \cap C_2  \mbox{~~or~~} \vec{U}=(U_j)_{Q_{\eta}^2} \in \left(\bigcap_{\ell=0}^{3} C_1^{(\ell)}\right) \cap C_2 \cap C_3$$ from which the coefficient ensemble is readily computed by $\vec{A} = \mathcal{F}_{\eta}^{-1}\vec{U} = (A_k)_{k\in Q_{\eta}^2}$. At this stage, we discuss how the scaling function and wavelets are generated and plotted.

\subsubsection*{Quaternionic cascade algorithm}

From the coefficient ensemble $\vec{A} =(A_k)_{k\in Q_{\eta}^2}$, we follow an appropriate cascade algorithm to determine the values $\varphi$ and $\psi^{\eps}$ ($\eps \in \{1,2,3\}$) at a dense set of dyadic rationals in the interior of $[0,\eta-1]^2$. 

If $\nu:\{1,2,\ldots, \eta^2\} \to Q_{\eta}^2$ is again a bijection, then the scaling equation in \eqref{eqn:Qscaling} may be rewritten as
\begin{equation}\label{eqn:Qscalingphi}
    \varphi(x) = {4}\sum_{j=1}^{\eta^2}  \varphi(2x-\nu(j)) (A_{\nu(j)}[0,0] + A_{\nu(j)}[0,1]). 
\end{equation}
Sampling \eqref{eqn:Qscalingphi} at points $\nu(n)\in Q_{\eta}^2$ ($n\in \{1,2,\ldots, \eta^2\}$) and letting $v= (\varphi(\nu(n)))_{n=1}^{\eta^2} \in \mathbb{R}_2^{\eta^2}$ gives the matrix equation $v=Sv$ where $S \in \mathbb{R}_2^{\eta^2 \times \eta^2}$ is given by  
$$S_{mn} = \begin{cases} {2^d}A_{\nu(j)}[0,0]& \text{if~~} 2\nu(n)-\nu(m) \in Q_{\eta}^2\\ 0 & \text{otherwise}\end{cases}.$$
Hence, the unknown vector $v$ (containing the lattice samples of $(\varphi(j))_{j\in Q_{\eta}^2}$) can be identified as the eigevector of $S$ with eigenvalue $1$. Once $v$ is known, the half-lattice samples of $\varphi$ are computed using \eqref{eqn:Qscalingphi}. By repeatedly using \eqref{eqn:Qscalingphi}, the values of $\varphi$ at a dense set of dyadic rationals interior to its support $[0,\eta-1]^2$ can be obtained. This procedure is the quaternionic extension of the cascade algorithm. Similarly, by using \eqref{eqn:Qwaveletscaling}, dyadic samples of the wavelet functions are also obtained.

\subsubsection*{Plots of quaternion-valued functions on the plane}

We first recall that a quaternion $q = q_0 + q_1e_1 + q_2e_2 + q_{12}e_{12}$ has a polar form representation %given by $$q= |q|e^{\mu_q \phi_q} = (\cos(\phi_q) + \mu_q \sin(\phi_q))$$ as in 
\eqref{eqn:polarquaternion}. 
Moreover, for any set $X \subseteq \mathbb{R}^2$, let $f: X \to \mathbb{R}_2$ be a quaternion-valued function, i.e., $f(x) = f_0(x) + f_1(x)e_1 + f_2(x)e_{2} + f_{12}(x)e_{12}$ where $f_0,f_1,f_2,f_{12}: X \to \mathbb{R}$. For a fixed $x=(x_1,x_2) \in X$, we write $f(x) = |f(x)|e^{\mu_{f(x)} \phi_{f(x)}}$ in polar form. Since $\mu_{f(x)} \phi_{f(x)}$ is a pure quaternion, we can write it as $$\mu_{f(x)} \phi_{f(x)} = R_{f(x)}e_1 + G_{f(x)}e_2 + B_{f(x)}e_{12}$$ with $R_{f(x)},G_{f(x)},B_{f(x)}$ the corresponding imaginary parts of $\mu_{f(x)} \phi_{f(x)}$. Thus, we may associate $(x,f(x))$ with a point in $\mathbb{R}^3$ with coordinates $(x_1,x_2, |f(x)|)$ and colored by $(R_{f(x)},G_{f(x)},B_{f(x)})$ injected into the RGB (red--green--blue) color space.

Therefore, in plotting our scaling functions and wavelets, we first run the cascade algorithm to obtain sampled values of $\varphi$ and $\psi^{\eps}$ ($\eps\in\{1,2,3\}$) over a sufficiently dense of set of dyadic rationals in the interior of $[0,\eta-1]^2$. These values are then expressed in polar form, and finally associated to a colored point in $\mathbb{R}^3$ for plotting.

\subsection{A check for orthonormality}\label{ssec:orthonormalitycheck}

As noted in Section~\ref{ssec:Qorthogonality}, the unitarity requirement for $U(\xi)$---which was later on imposed on the ensembles of samples of $U(\xi)$---is only a necessary condition for the orthonormality of the lattice shifts of $\varphi$ and $\{\psi_{\eps}\}_{\eps=1}^3$. Even though this unitarity condition ``promotes'' orthonormality, it is not a guarantee. However, we can use Proposition~\ref{prop:Qorthonormalitycheck} to check whether or not the scaling filter $m_0$ obtained using the feasibility approach satisfies the sufficient condition for orthonormality. With Proposition~\ref{prop:Qorthonormalitycheck} and the fact that $m_0$ is a trigonometric polynomial (and hence continuous), it is enough that $[  m_0(\xi)] ^{\ast}[  m_0(\xi)]  > cI_2$ for all $\xi \in [-\frac{1}{4},\frac{1}{4}]^2$ (where $0 <c<1$), or equivalently, that $[m_0(\xi)] ^{\ast}[m_0(\xi)]$ is positive definite for all $\xi \in [-\frac{1}{4},\frac{1}{4}]^2$. Thus, it suffices to check if all eigenvalues of $[m_0(\xi)] ^{\ast}[m_0(\xi)]$ are positive for all $\xi \in [-\frac{1}{4},\frac{1}{4}]^2$. Explicitly, 
\begin{equation*}
  %  \resizebox{0.99\hsize}{!}{%
        [  m_0(\xi)] ^{\ast}[  m_0(\xi)]  = \begin{bmatrix}
        s(\xi)&v(\xi )\\\overline{v(\xi )}&s(-\xi )
        \end{bmatrix}
        %\begin{bmatrix}
          %  |{m_0}_s (\xi)|^2 + |{m_0}_v (-\xi)|^2 & \overline{{m_0}_s(\xi)}{m_0}_v(\xi) + \overline{{m_0}_v(-\xi)}{m_0}_s(-\xi)\\
            %\overline{{m_0}_v(\xi)}{m_0}_s(\xi) + \overline{{m_0}_s(-\xi)}{m_0}_v(-\xi) & |{m_0}_s (-\xi)|^2 + |{m_0}_v (\xi)|^2
        %\end{bmatrix}.$}
\end{equation*}
where $s(\xi):=|{m_0}_s (\xi)|^2 + |{m_0}_v (-\xi)|^2$ and $v(\xi) := \overline{{m_0}_s(\xi)}{m_0}_v(\xi) + \overline{{m_0}_v(-\xi)}{m_0}_s(-\xi)$.
 %   v_1(\xi) &:= \overline{{m_0}_s(\xi)}{m_0}_v(\xi) + \overline{{m_0}_v(-\xi)}{m_0}_s(-\xi), \\
  %  v_2(\xi)&:=\overline{{m_0}_v(\xi)}{m_0}_s(\xi) + \overline{{m_0}_s(-\xi)}{m_0}_v(-\xi) = \overline{v_1(\xi)},\\
   % s_2(\xi)&:=|{m_0}_s (-\xi)|^2 + |{m_0}_v (\xi)|^2 = s_1(-\xi).
%\end{align*}
With $\lambda_\pm (\xi)$ the two eigenvalues of $[  m_0(\xi)] ^{\ast}[  m_0(\xi)]$, the eigenvalue formula \eqref{eqn:SVeigenvalue} gives
$$\lambda_\pm(\xi) = \frac{s(\xi) + {s(-\xi)} \pm \sqrt{(s(\xi)-{s(-\xi)})^2 + 4|v(\xi )|^2}}{2}$$
for each $\xi \in\mathbb{R}^2$. Therefore, a scaling function $\varphi$ \emph{passes the orthonormality check} whenever 
\begin{equation}\label{eqn:Qorthonormalitycheck}
    \Lambda(\xi) := \min (\lambda_+(\xi),\lambda_-(\xi)) > 0 \mbox{~~for all~~} \xi \in [{\textstyle -\frac{1}{4},\frac{1}{4}}]^2.
\end{equation}

\subsection{A check for nonseparability}\label{ssec:nonseparabilitycheck}

The feasibility problem formulation for quaternionic wavelets is ``agnostic'' toward the concept of separability, i.e., it may or may not produce separable scaling functions and wavelets.  Hence, it is imperative that we first come up with an appropriate measure of separability in the quaternionic setting.

Suppose $\varphi^1,\varphi^2$ are quaternion-valued scaling functions on the line and $\varphi$ is a quaternion-valued scaling function on the plane. If $\varphi(x_1,x_2) = \varphi^1(x_1)\varphi^2(x_2)$ then
$$\int_{\mathbb{R}} \varphi(x_1,x_2) dx_2 = \varphi^1(x_1) \text{~and~} \int_{\mathbb{R}} \varphi(x_1,x_2) dx_1 = \varphi^2(x_2) $$
and so 
$$\varphi(x_1,x_2) = \left(\int_{\mathbb{R}} \varphi(x_1,s) ds\right) \left(\int_{\mathbb{R}} \varphi(t,x_2) dt \right).$$
Using this observation, a measure of separability is defined in the next proposition. We omit the proof for brevity.

\begin{proposition}\label{prop:Qnonsepmeasure}
    Let $\varphi$ be a quaternion-valued scaling function on the plane and define 
    \begin{equation*}
    \tilde{\zeta}(\varphi):= \int_{\mathbb{R}}\int_{\mathbb{R}} \left|\varphi(x_1,x_2) - \left(\int_{\mathbb{R}} \varphi(x_1,s) ds\right) \left(\int_{\mathbb{R}} \varphi(t,x_2) dt \right)  \right|^2dx_1dx_2
    \end{equation*}
    as a \emph{measure of separability for $\varphi$}. Then $\varphi$ is separable if and only if $\tilde{\zeta}(\varphi)=0$.
\end{proposition}

In the quaternionic wavelet feasibility problem, we do not have a constraint set to promote nonseparability. However, we include a separability check to determine whether or not we have obtained a separable scaling function. We conclude nonseparability whenever a scaling function $\varphi$ satisfies $\tilde{\zeta}(\varphi) >0$.

\section{Numerical results}
\label{sec:Qnumericalsolutions}

In this section, we attempt to numerically solve the product space reformulations of the standard quaternionic wavelet feasibility problems described in Problem~\ref{prob:Qwaveletfeasibilityproblem}. Relevant source codes, solutions and wavelet filters are available at \url{https://gitlab.com/nddizon1/waveletconstruction}, where we have also used the Quaternion Toolbox for MATLAB\textsuperscript{\textregistered} \cite{qtfm}.

For our purpose, we denote the product spaces in  Pierra's two-set reformulation of a many-set feasibility problem by $C$ and $D$ as in \eqref{eqn:QwaveletprodC} and \eqref{eqn:QwaveletprodD}, respectively. We again confine our choice of projection algorithm to the Douglas--Rachford (DR) as described in Algorithm~\ref{alg:DR}. We let $(x_n)_{n\in \mathbb{N}}$ be the sequence of iterates generated by product DR. In all our numerical implementations, we consider a tolerance $\epsilon:=10^{-9}$ and adopt a stopping criterion given by $\|P_DP_C(x_n) - P_C(x_n)\| < \epsilon$. We consider a projection algorithm to have \emph{solved} our feasibility problem if it satisfies the stopping criterion within the cutoff of $10,000$ iterates when $\eta=4$, or $300,000$ iterates when $\eta=6$. For some interesting runs, we also plot \emph{errors} given by $\|x_n-x_{n-1}\|$ and $\|P_C(x_n) - P_C(x_{n-1})\|$ for the DR iterates and shadow sequences, respectively.

\begin{table}[hbt!]
    \centering
    \resizebox{\textwidth}{!}{%
        \begin{tabular}{@{}ccccccccc@{}}
            \toprule
            & \multirow{2}{*}{\textbf{Parameters}} & \multirow{2}{*}{\textbf{\begin{tabular}[c]{@{}c@{}}cases\\ solved\end{tabular}}} & \multicolumn{6}{c}{\textbf{number of iterations (when solved)}} \\ \cmidrule(l){4-9} 
            &                 &        & \textbf{min} & \textbf{Q1} & \textbf{median} & \textbf{Q3} & \textbf{mean} & \textbf{max} \\ \midrule
            Problem~\ref{prob:Qwaveletfeasibilityproblem}\ref{prob:QwaveletfeasibilityproblemA} & $\eta=4, \mu=1$ & 72/100 & 2411         & 3929        & 4891            & 6525        & 5306          & 9884         \\
            Problem~\ref{prob:Qwaveletfeasibilityproblem}\ref{prob:QwaveletfeasibilityproblemA} & $\eta=6, \mu=2$ & 1/50   & 22172        & --       & 22172           & --       & 22712         & 22172        \\
            Problem~\ref{prob:Qwaveletfeasibilityproblem}\ref{prob:QwaveletfeasibilityproblemB} & $\eta=6, \mu=2$ & 35/50 & 60255        & 81355       & 97405           & 137698      & 110530        & 299497       \\ \bottomrule
        \end{tabular}%
    }\caption{Statistics on the performance of product Douglas--Rachford in solving different cases of Problem~\ref{prob:Qwaveletfeasibilityproblem}.}
    \label{tab:Qwaveletfeasiblityproblem}
\end{table}

Table~\ref{tab:Qwaveletfeasiblityproblem} gives a summary of the specific problem parameters chosen for each case of Problem~\ref{prob:Qwaveletfeasibilityproblem}. It contains the number of times product DR solved a particular problem using randomized starting points. Furthermore, the table provides statistics on the number of iterations incurred in solving the problem.

For Problem~\ref{prob:Qwaveletfeasibilityproblem}\ref{prob:QwaveletfeasibilityproblemA} with $\eta=4$ and $\mu=1$, we see from Table~\ref{tab:Qwaveletfeasiblityproblem} that product DR solved  $72\%$ of all the test cases. An example of a wavelet ensemble derived from a solution to this feasibility problem is given in Figure~\ref{fig:QM4D1_0}.

Similarly, product DR solved only $2\%$ of all initialisations made for Problem~\ref{prob:Qwaveletfeasibilityproblem}\ref{prob:QwaveletfeasibilityproblemA} with $\eta=6$ and $\mu=2$. This means that within the cutoff of $300,000$ iterates, $98\%$ of the initialisations were not able to reach the stopping criterion. This, however, does not imply that DR is not adept in solving this problem. Analysing these runs revealed that most starting points belonging to the latter group exhibited decreasing errors but were not enough to meet the threshold. This only suggests that more iterations should be allowed to meet the stopping criterion. An example of a wavelet ensemble derived from a solution to this feasibility problem is given in Figure~\ref{fig:QM6D2}. Another wavelet ensemble solution (after allowing more iterations) is given in Figure~\ref{fig:QM6D2_IMP}. Observe how the scaling function exhibits (near) symmetry even though we have not yet added the symmetry constraint in this problem.

For Problem~\ref{prob:Qwaveletfeasibilityproblem}\ref{prob:QwaveletfeasibilityproblemB}, we first note that it takes into account the symmetry constraint set. This means its feasible region is smaller than that of Problem~\ref{prob:Qwaveletfeasibilityproblem}\ref{prob:QwaveletfeasibilityproblemA}. For Problem~\ref{prob:Qwaveletfeasibilityproblem}\ref{prob:QwaveletfeasibilityproblemB} with $\eta=6$ and $\mu=2$, product DR solved $70\%$ of all initialisations. An example of wavelet ensemble derived from solutions of this particular feasibility problem is given in Figure~\ref{fig:QM6D2_S1}.

\begin{figure}[htbp]
    \centering
    \vspace{-1cm}
    
    \subfloat[Scaling function $\varphi$]{\includegraphics[width=0.23\linewidth]{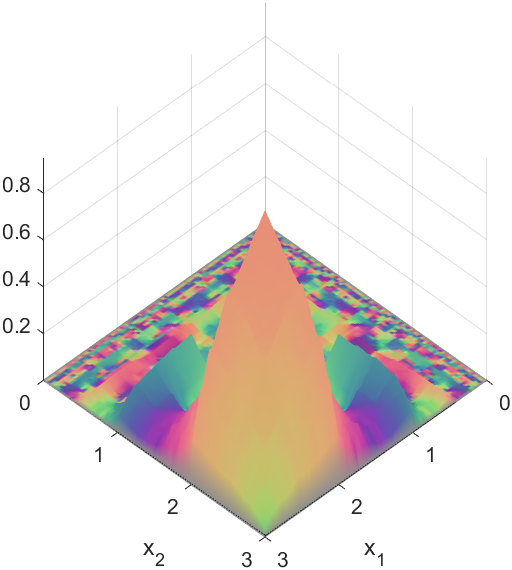}\label{fig:QM4D1_0_phi}}\hfill
    \subfloat[Wavelet $\psi_1$]{\includegraphics[width=0.23\linewidth]{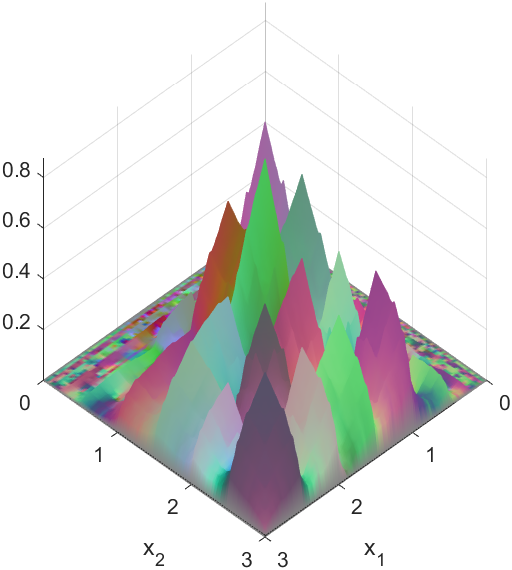}\label{fig:QM4D1_0_psi1}}\hfill
    \subfloat[Wavelet $\psi_2$]{\includegraphics[width=0.23\linewidth]{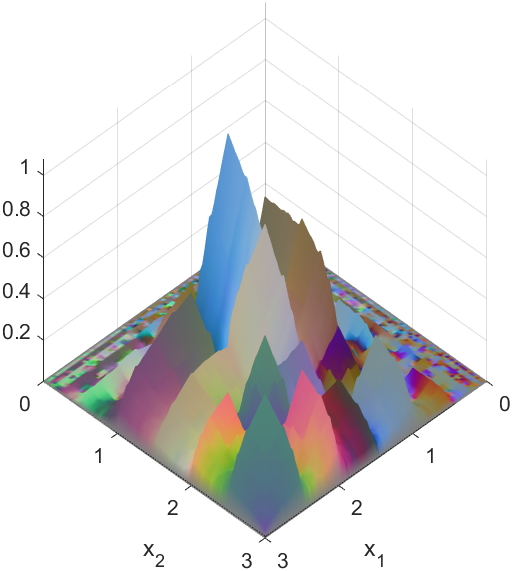}\label{fig:QM4D1_0_psi2}}	\hfill
    \subfloat[Wavelet $\psi_3$]{\includegraphics[width=0.23\linewidth]{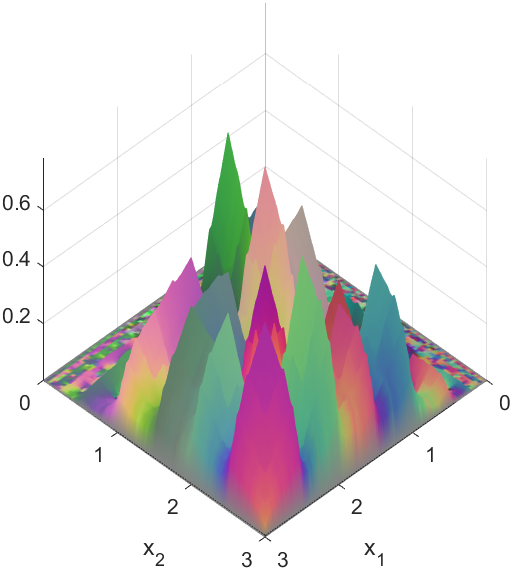}\label{fig:QM4D1_0_psi3}}\hfill
    \caption{Wavelet ensemble derived from  Problem~\ref{prob:Qwaveletfeasibilityproblem}\ref{prob:QwaveletfeasibilityproblemA} where $\eta=4$ and $\mu=1$, with  $\tilde{\zeta}(\varphi)\approx 2.3243$.}\label{fig:QM4D1_0}
    \vfill
    \subfloat[Scaling function $\varphi$]{\includegraphics[width=0.23\linewidth]{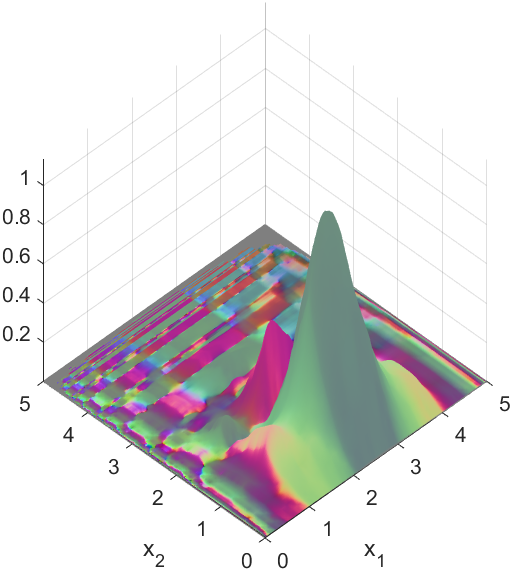}\label{fig:QM6D2_phi}}\hfill
    \subfloat[Wavelet $\psi_1$]{\includegraphics[width=0.23\linewidth]{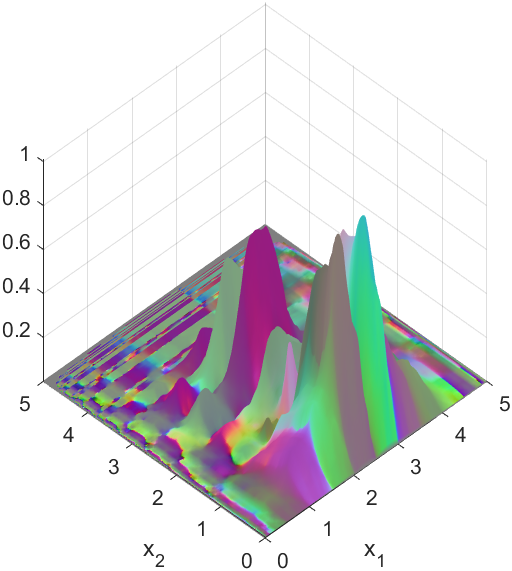}\label{fig:QM6D2_psi1}}\hfill
    \subfloat[Wavelet $\psi_2$]{\includegraphics[width=0.23\linewidth]{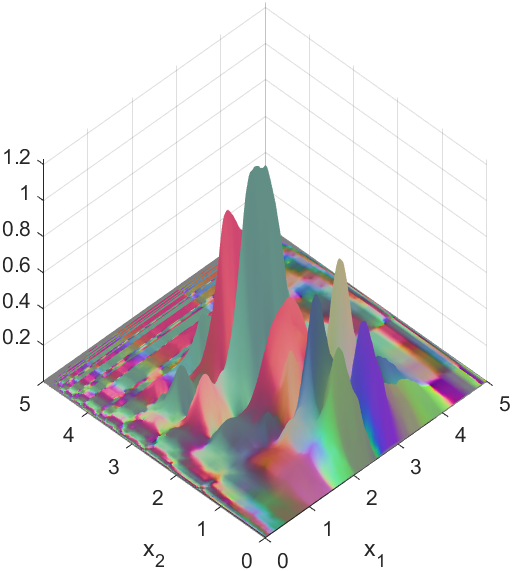}\label{fig:QM6D2_psi2}}	\hfill
    \subfloat[Wavelet $\psi_3$]{\includegraphics[width=0.23\linewidth]{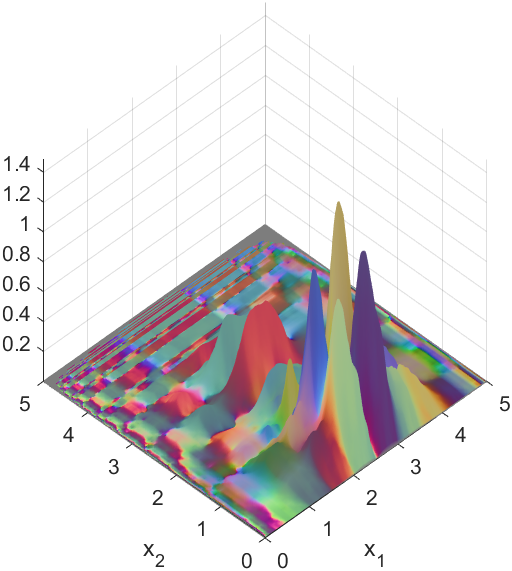}\label{fig:QM6D2_psi3}}\hfill
    \caption{Wavelet ensemble derived from Problem~\ref{prob:Qwaveletfeasibilityproblem}\ref{prob:QwaveletfeasibilityproblemA} where $\eta=6$ and $\mu=2$, with $\tilde{\zeta}(\varphi)\approx 0.5922$.}\label{fig:QM6D2}
    \vfill
    \subfloat[Scaling function $\varphi$]{\includegraphics[width=0.23\linewidth]{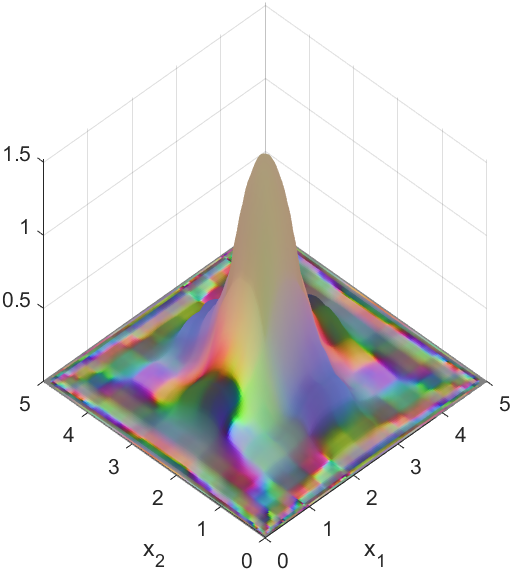}\label{fig:QM6D2_IMP_phi}}\hfill
    \subfloat[Wavelet $\psi_1$]{\includegraphics[width=0.23\linewidth]{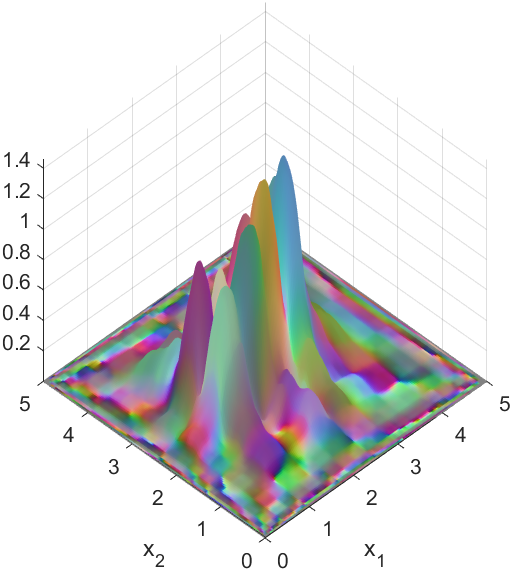}\label{fig:QM6D2_IMP_psi1}}\hfill
    \subfloat[Wavelet $\psi_2$]{\includegraphics[width=0.23\linewidth]{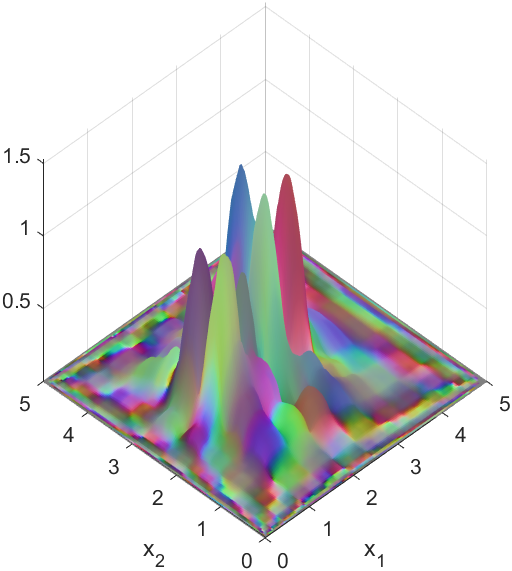}\label{fig:QM6D2_IMP_psi2}}	\hfill
    \subfloat[Wavelet $\psi_3$]{\includegraphics[width=0.23\linewidth]{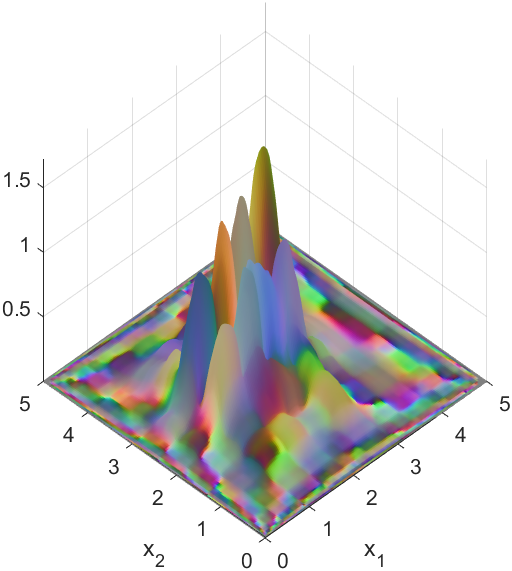}\label{fig:QM6D2_IMP_psi3}}\hfill
    \caption{Wavelet ensemble derived from Problem~\ref{prob:Qwaveletfeasibilityproblem}\ref{prob:QwaveletfeasibilityproblemA} where $\eta=6$ and $\mu=2$, with  $\tilde{\zeta}(\varphi)\approx 1.6764$.}\label{fig:QM6D2_IMP}
    \vfill
    \subfloat[Scaling function $\varphi$]{\includegraphics[width=0.23\linewidth]{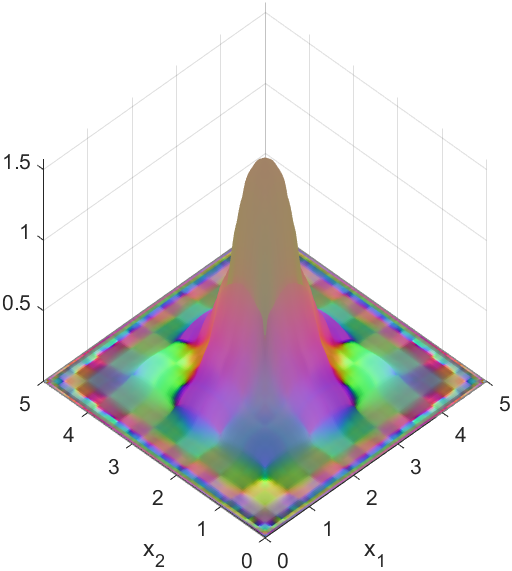}\label{fig:QM6D2_S1_phi}}\hfill
    \subfloat[Wavelet $\psi_1$]{\includegraphics[width=0.23\linewidth]{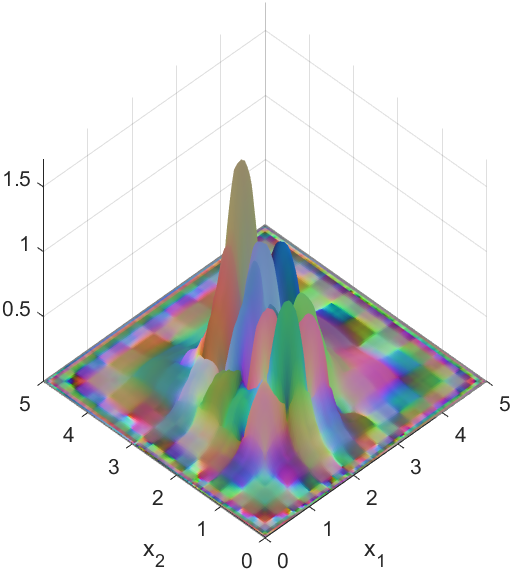}\label{fig:QM6D2_S1_psi1}}\hfill
    \subfloat[Wavelet $\psi_2$]{\includegraphics[width=0.23\linewidth]{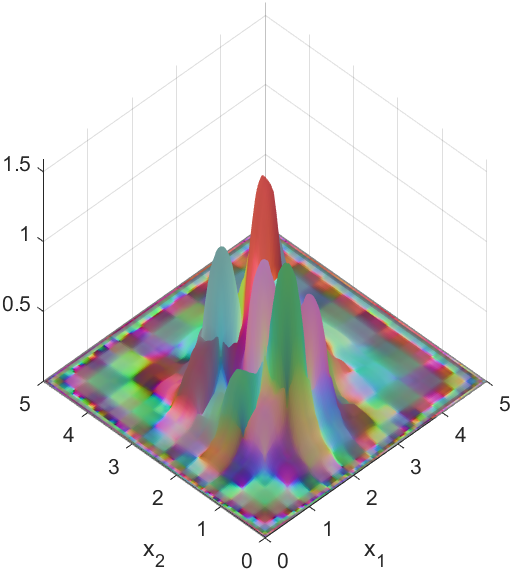}\label{fig:QM6D2_S1_psi2}}	\hfill
    \subfloat[Wavelet $\psi_3$]{\includegraphics[width=0.23\linewidth]{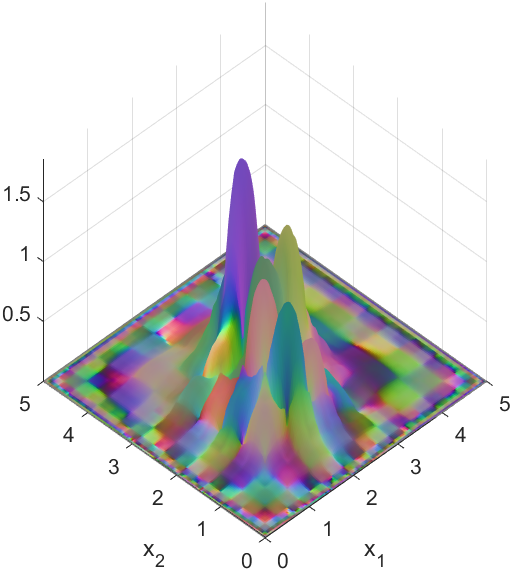}\label{fig:QM6D2_S1_psi3}}\hfill
    \caption{Wavelet ensemble derived from Problem~\ref{prob:Qwaveletfeasibilityproblem}\ref{prob:QwaveletfeasibilityproblemB} where $\eta=6$ and $\mu=2$, with  $\tilde{\zeta}(\varphi)\approx 1.4557$.}\label{fig:QM6D2_S1}
\end{figure}

The scaling functions and wavelets in Figures~\ref{fig:QM4D1_0}--\ref{fig:QM6D2_S1} passed the orthonormality check, as illustrated in Figure~\ref{fig:Qorthonormalitycheck} showing the positivity of $\Lambda(\xi)$ (defined in \eqref{eqn:Qorthonormalitycheck}) for each filter $m_0$ associated to the scaling functions in Figures~\ref{fig:QM4D1_0}--\ref{fig:QM6D2_S1}. We also note that each scaling function $\varphi$ that appeared in Figures~\ref{fig:QM4D1_0}--\ref{fig:QM6D2_S1} passed the nonseparability check in Proposition~\ref{prop:Qnonsepmeasure}, i.e.,  ${\tilde{\zeta}}(\varphi)> 0$. The convergence heuristics of DR for selected cases of Problem~\ref{prob:Qwaveletfeasibilityproblem} are also presented in Figure~\ref{fig:Uchange_Q}.

\begin{figure}[htbp]
    \subfloat[For $\varphi$ in Figure~\ref{fig:QM4D1_0}]{\includegraphics[width=0.23\linewidth]{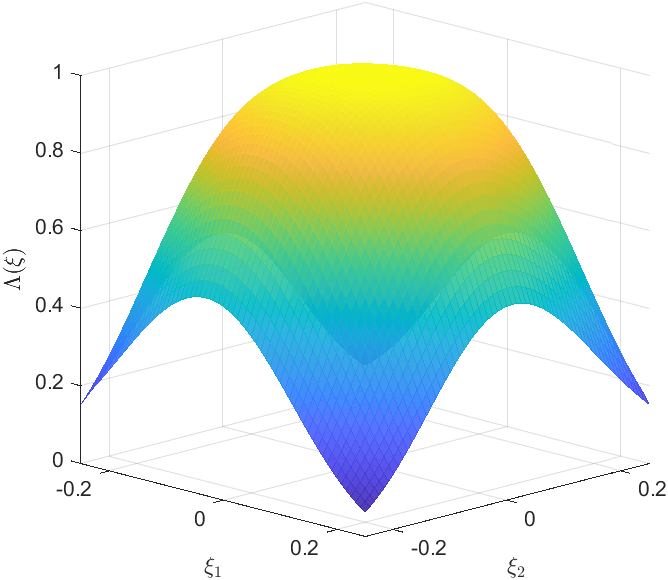}\label{fig:QM4D1_0_Lambda}}\hfill
    \subfloat[For $\varphi$ in Figure~\ref{fig:QM6D2}]{\includegraphics[width=0.23\linewidth]{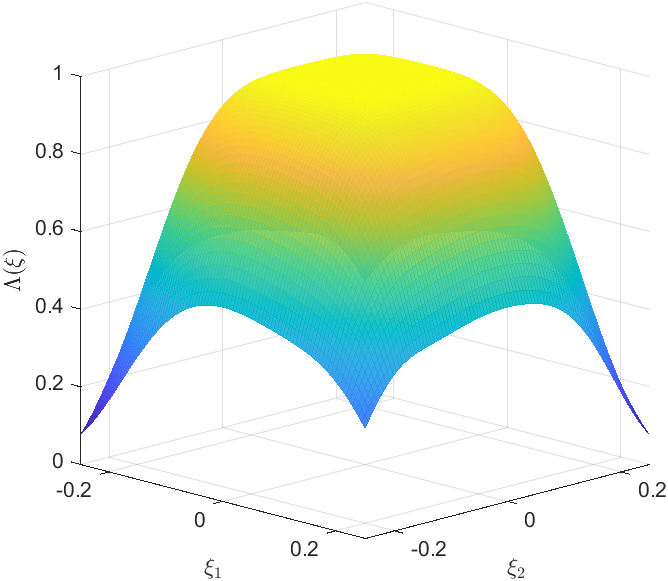}\label{fig:QM6D2_Lambda}}\hfill
    \subfloat[For $\varphi$ in Figure~\ref{fig:QM6D2_IMP}]{\includegraphics[width=0.23\linewidth]{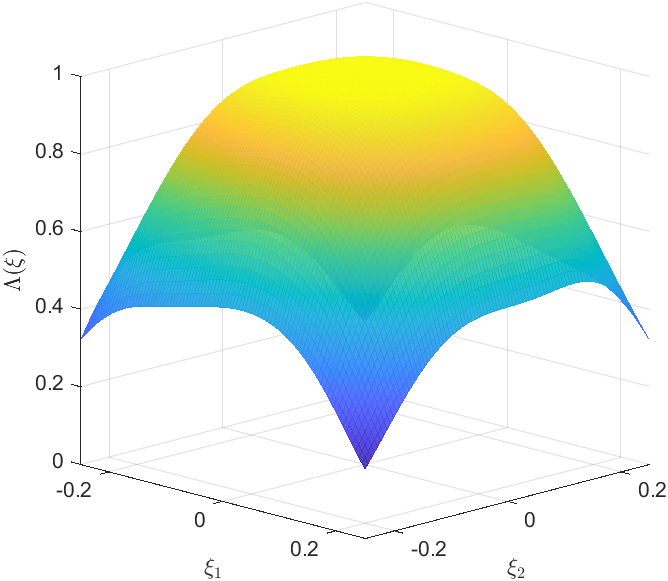}\label{fig:QM6D2_IMP_Lambda}}\hfill
    \subfloat[For $\varphi$ in Figure~\ref{fig:QM6D2_S1}]{\includegraphics[width=0.23\linewidth]{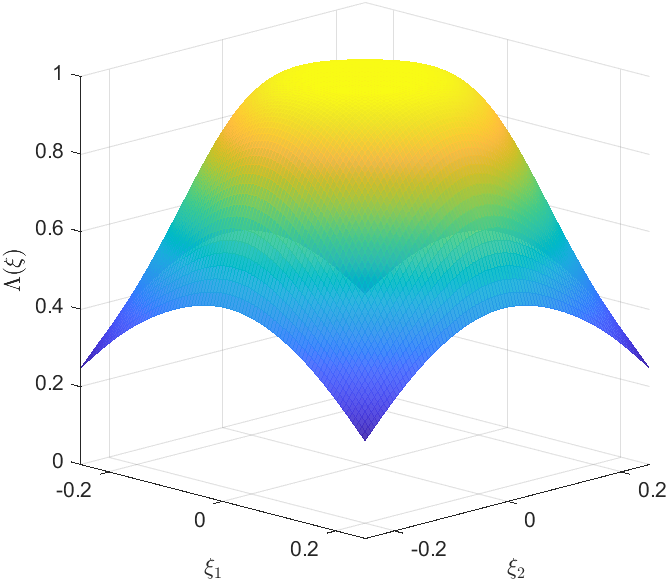}\label{fig:QM6D2_S1_Lambda}}	\hfill
    \caption{Orthonormality checks for wavelet ensemble solutions.}\label{fig:Qorthonormalitycheck}
    \vfill
    \subfloat[For Figure~\ref{fig:QM4D1_0}]{\includegraphics[width=0.23\linewidth]{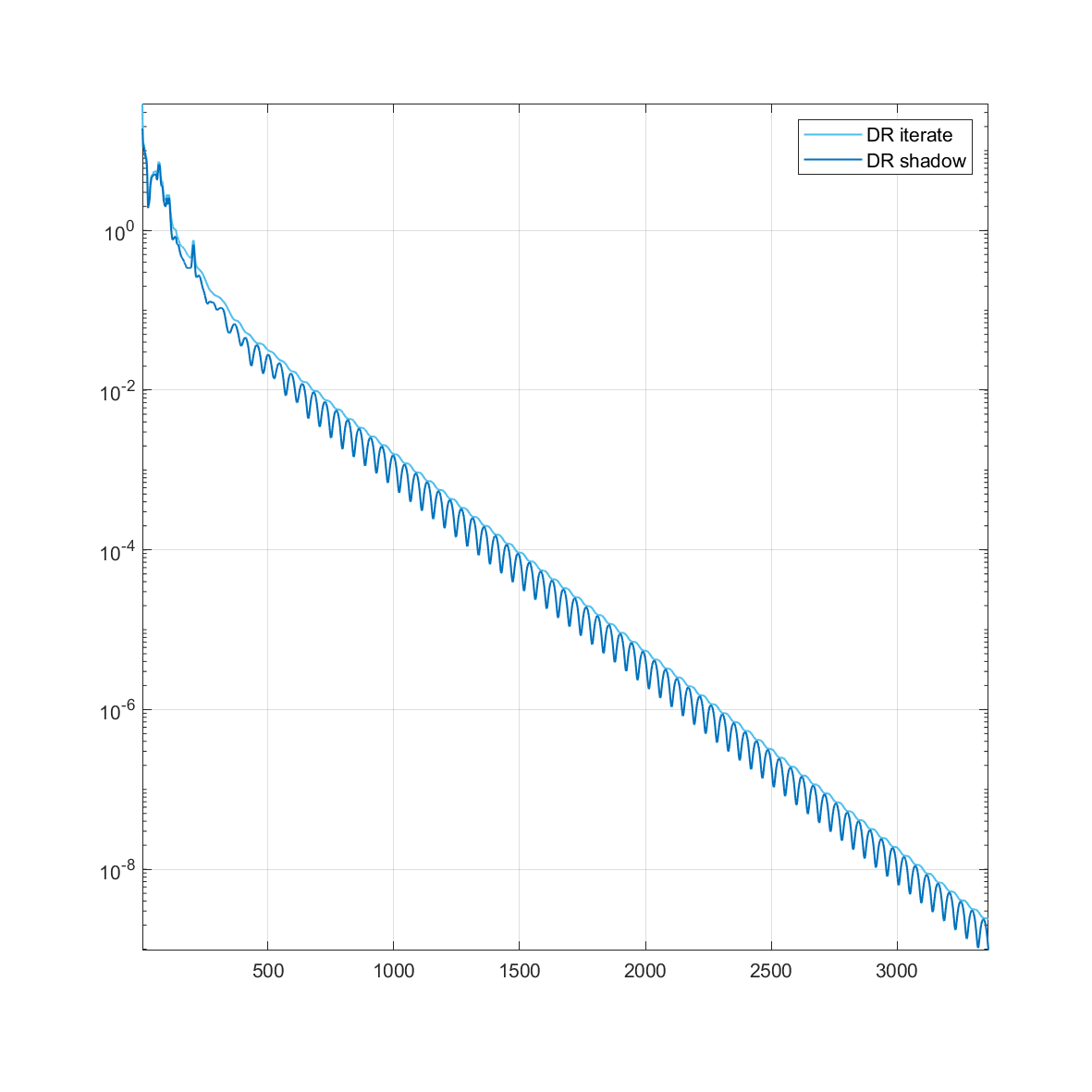}\label{fig:Change_QM4D1_0}}\hfill
    \subfloat[For Figure~\ref{fig:QM6D2}]{\includegraphics[width=0.23\linewidth]{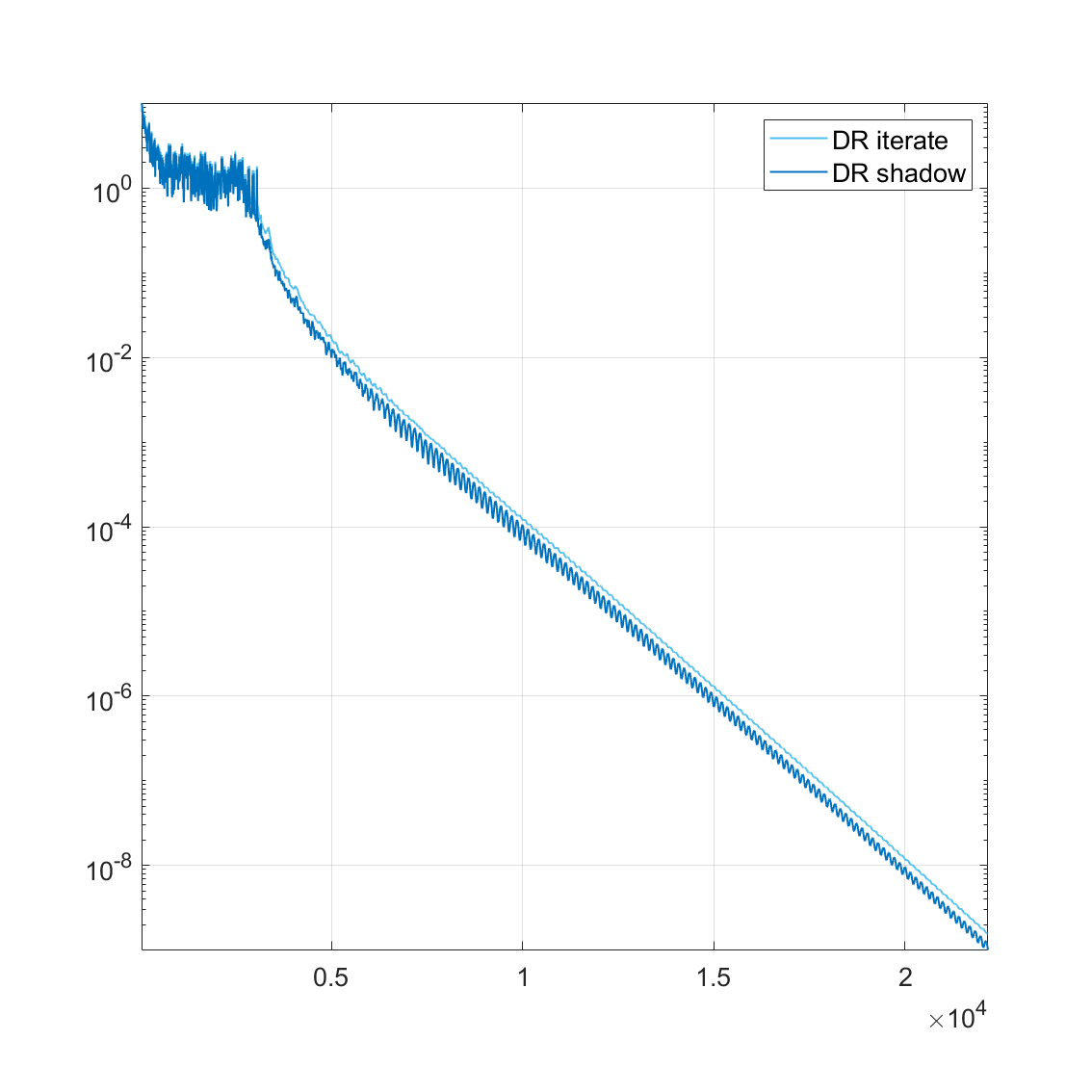}\label{fig:Change_QM6D2}}\hfill
    \subfloat[For Figure~\ref{fig:QM6D2_IMP}]{\includegraphics[width=0.23\linewidth]{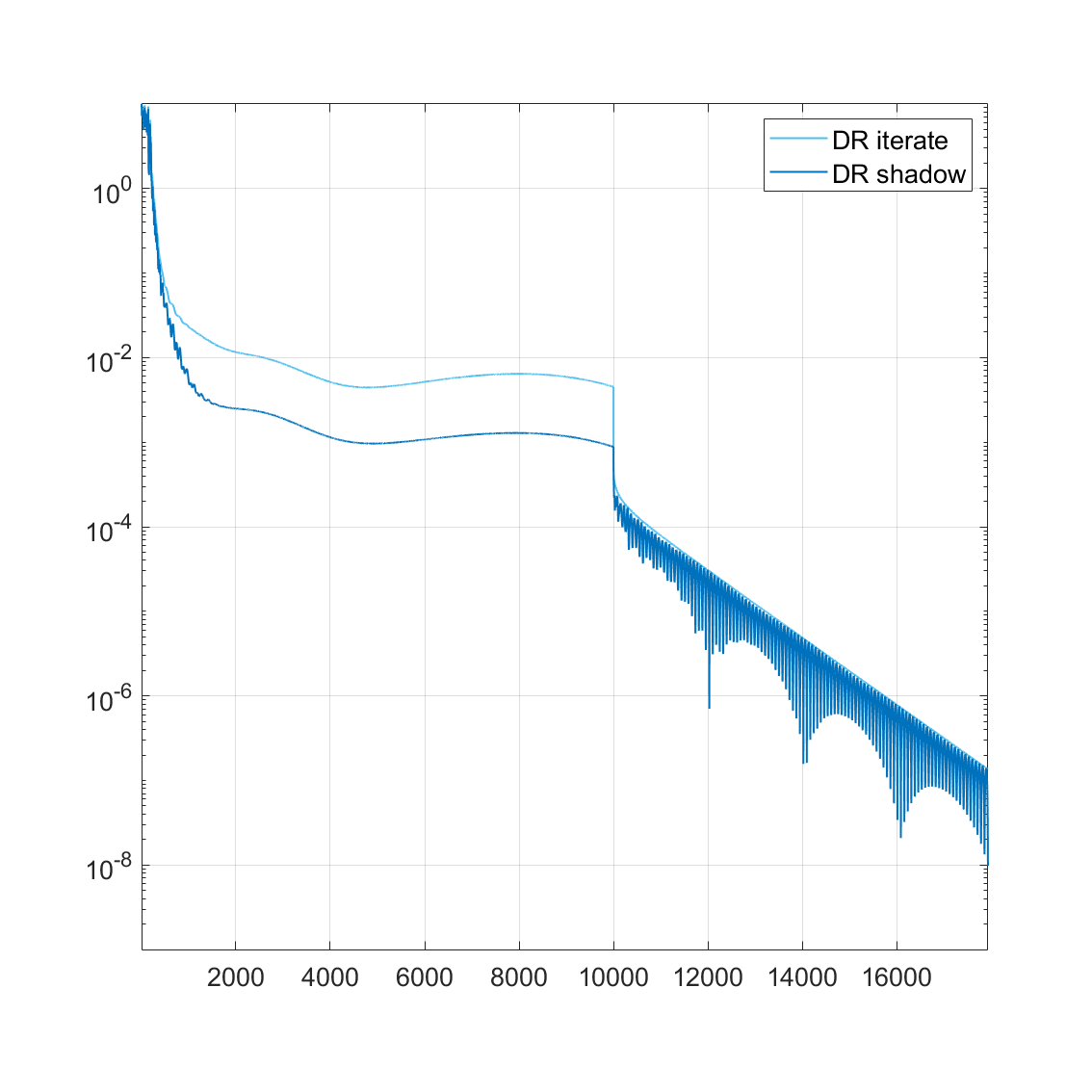}\label{fig:Change_QM4D1}}\hfill
    \subfloat[For Figure~\ref{fig:QM6D2_S1}]{\includegraphics[width=0.23\linewidth]{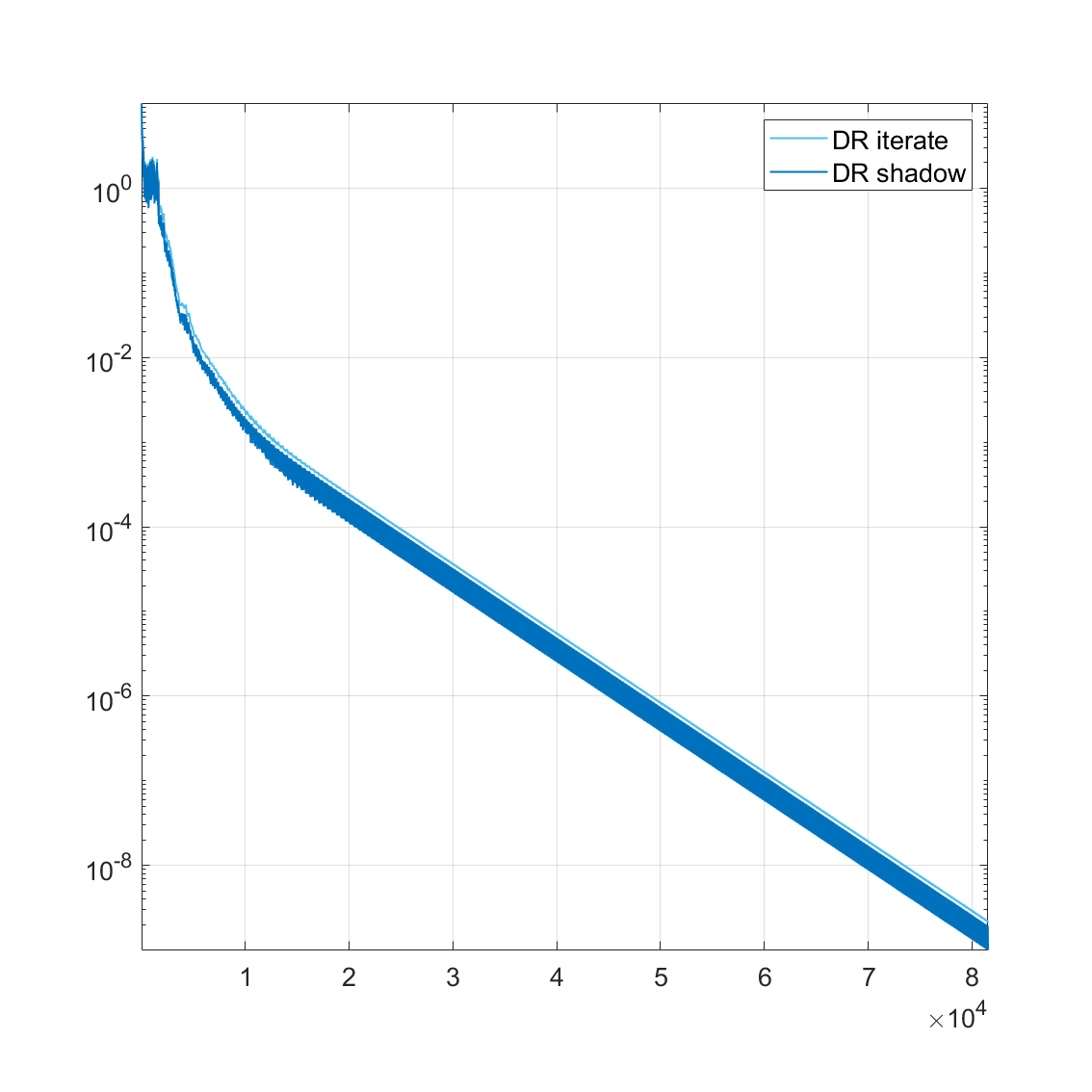}\label{fig:Change_QM6D2_S1}}	\hfill
    \caption{Convergence heuristics of DR for selected cases of Problem~\ref{prob:Qwaveletfeasibilityproblem}.}\label{fig:Uchange_Q}
\end{figure}

\section{Conclusion}
The feasibility approach to wavelet construction served as an alternative method for constructing classical higher-dimensional wavelets. It presented a path towards the construction of quaternionic wavelets on the plane. The successful architecture of nonseparable, multiresolution, compactly supported, smooth and orthonormal quaternion-valued wavelets on the plane leaves open many important avenues of research. With these wavelets, the pixel components of a color image may now be encoded into quaternions for holistic processing of signals using wavelet transforms. With such an approach, the potentially useful correlations between the pixel components are not lost. Using the quaternion-valued wavelets on the plane developed in this paper, a method for decomposing and reconstructing color images using quaternion-valued wavelets has been proposed and implemented for color image processing, as detailed in \cite{dizon2024holistic}. Finally, we note that cardinality and other types of symmetries may also be added in the design criteria for building quaternionic wavelets.

\appendix

\section{Projection operators}\label{app1}
\renewcommand{\thetheorem}{\Alph{section}.\arabic{theorem}}
\setcounter{theorem}{0} % Resets the theorem counter at the appendix

In this section, we describe the appropriate projectors onto 
each of the constraint sets in the quaternionic wavelet feasibility problem.

We begin with a proposition that summarizes several standard projection results in finite-dimensional Hilbert spaces and in the set of spinor-vector block matrices. These results will be repeatedly invoked in the subsequent derivation of projectors onto the constraint sets in the quaternionic wavelet feasibility problem.

\begin{proposition}\label{prop:projectors} Let $\mathcal{H}_1, \mathcal{H}_2$ be finite-dimensional Hilbert spaces, $\mathsf{S}^{n\times n}$ be the collection of $n$-by-$n$ spinor-vector block matrices, and $\mathcal{U}^{n\times n}$ be the set of unitary $n\times n$ spinor-vector block matrices. 
    \begin{enumerate}[label=(\alph*)]
        \itemsep0em
        \item If $T: \mathcal{H}_1 \to \mathcal{H}_2$ is linear, $C=\Null T := \{x \in \mathcal{H}_1 \, : \, Tx=0\}$ and $TT^{\ast}$ is invertible, then $P_C(x) = x-T^{\ast}(TT^{\ast})^{-1}T (x) \text{ for all } x\in\mathcal{H}_1.$ \label{prop:projectors1}
        \item If $0\neq a \in \mathcal{H}_1$, $\beta \in \mathbb{R}$ and $C=\{z\in \mathcal{H}_1\, : \, \langle a,z\rangle = \beta\}$, then for all $x \in \mathcal{H}_1$  $$P_{C}(x) = x -\frac{\langle a,x \rangle -\beta}{\|a\|^2}a.$$\label{prop:projectors2}
        \item If $C=\mathcal{B}(z,\delta) \subset \mathcal{H}_2$, then $$P_C(x) = \begin{cases}x, & \text{if~} \|x-z\| \leq \delta\\ z + \frac{\delta}{\|x-z\|}(x-z) & \text{if~} \|x-z\|> \delta.\end{cases}$$	\label{prop:projectors3}
        \item If $A \in \mathsf{S}^{n \times n}$ then
        $P_{\mathcal{U}^{n\times n}}(A) = \{UV^{\ast} \, : \, A = U\Sigma V^{\ast} \text{ is a singular value decomposition of $A$}\}.$\label{prop:Qprojectors1}
        \item Let $a \in \mathsf{S}$, $b\in \mathsf{S}^{1\times(n-1)}$, $c \in \mathsf{S}^{(n-1)\times 1}$, $D \in \mathsf{S}^{(n-1)\times(n-1)}$ and $A=\begin{bmatrix} a & b\\ 
            c & D\end{bmatrix}.$ Then $$P_{[1]\otimes \mathcal{U}^{(n-1)\times (n-1)}}(A)  = \left\{\begin{bmatrix}[1] & d^\top \\ d & \tilde{D}\end{bmatrix} : d=0\in\mathsf{S}^{(n-1)\times 1}, \tilde{D}\in P_{\mathcal{U}^{(n-1)\times (n-1)}}(D)\right\}.$$\label{prop:Qprojectors2}
        \item Let $Q$ be a countably finite index set, $(\mathsf{S}^{n\times n})^Q :=\{\vec{B}=(B_j)_{j\in Q}\, : \; B_j \in \mathsf{S}^{n\times n}\ \forall j \in Q\}$ and $(\mathcal{U}^{n\times n})^Q:=\{\vec{B}=(B_j)_{j \in Q} \, : \, B_j \in \mathcal{U}^{n\times n} \ \forall j \in Q\}$. For any $\vec{V} \in (\mathsf{S}^{n\times n})^Q$, $$\vec{W}=(W_j)_{j\in Q} \in P_{({\mathcal{U}^{n\times n}})^Q} (\vec{V}) \iff W_j \in P_{\mathcal{U}^{n\times n}}(V_j) \text{~~for all ~} j \in Q.$$\label{prop:Qprojectors3}
        \item Let $0 \neq \vec{a} \in \mathbb{C}^d$, $b\in \mathbb{C}$, $\epsilon\geq0$ and $C:=\{\vec{z} \in \mathbb{C}^d \,:\, |\langle \vec{a},\vec{z} \rangle + b | \leq \epsilon\}$. If $\beta(\vec{z}) = \frac{ \epsilon (\langle \vec{a},\vec{z} \rangle + b)}{|\langle \vec{a},\vec{z} \rangle + b|}$, then $P_C(z) =\vec{p}$ where
        $$\vec{p}= \begin{cases}\vec{z} & \text{if~} |\langle \vec{a},\vec{z} \rangle + b| \leq \epsilon\\ \vec{z} + \frac{\beta(\vec{z})-\langle \vec{a},\vec{z} \rangle-b}{\|\vec{a}\|^2}\vec{a} & \text{if~} |\langle \vec{a},\vec{z} \rangle + b| > \epsilon.\end{cases}$$\label{prop:projectors_symm}
    \end{enumerate}	
\end{proposition}
\begin{proof}
    For \ref{prop:projectors1}, see \cite[Example~29.17]{bcombettes}. For \ref{prop:projectors2}--\ref{prop:projectors3}, see \cite[Section~4.1]{cegielski}. For \ref{prop:Qprojectors1}, see \cite[Corollary~6.30]{dizonthesis}. Statements \ref{prop:Qprojectors2} and \ref{prop:Qprojectors3} directly follow from \ref{prop:Qprojectors1}. Lastly, for \ref{prop:projectors_symm}, see \cite[Proposition~1.8]{dizonthesis}.
\end{proof}

We now look at the projectors onto the constraint sets $C_1$, $C_2$ and $C_3$ of the quaternionic wavelet feasiblity problem defined in Problem~\ref{prob:Qwaveletfeasibilityproblem}.
It is important to note that this problem is posed on the Hilbert module $(\mathsf{S}^{4\times 4})_{\sigma\tau}^{Q_{\eta}^2}$ which contains ensembles that satisfy all of the consistency conditions. And so, when employing projection algorithms, the initial ensemble must lie in $(\mathsf{S}^{4\times 4})_{\sigma\tau}^{Q_{\eta}^2}$ as defined in \eqref{eqn:QHilbert}.

\subsubsection*{Projector onto $C_1$}

Recall that the constraint sets $C_1^{(\ell)}$ with $0 \leq \ell \leq 3$ together impose the unitary conditions at $4\eta^2$ samples of the wavelet matrix $U(\xi)$. In order to describe the projectors onto these constraint sets, we appeal to Proposition~\ref{prop:projectors}\ref{prop:Qprojectors1}, \ref{prop:projectors}\ref{prop:Qprojectors2} and \ref{prop:projectors}\ref{prop:Qprojectors3}.

\begin{proposition}\label{thm:Qprojunitary}
    Let $C_1^{(\ell)}$ with $0 \leq \ell \leq 3$ be defined as in Problem~\ref{prob:Qwaveletfeasibilityproblem}, and $\vec{U}=(U_j)_{j\in Q_{\eta}^2} \in (\mathsf{S}^{4\times 4})^{Q_{\eta}^2}$. Then the following hold. 
    \begin{enumerate}[label=(\alph*)]
        \itemsep0em
        \item For $C_{1}^{(0)}$, $\vec{V}=(V_j)_{j\in Q_{\eta}^2} \in P_{C_1^{(0)}}(\vec{U})$ if and only if  $$V_j  \in 
        \left\{\begin{array}{ll}
            \sigma_{k} P_{[1] \otimes \mathcal{U}^{3\times 3}}(U_0) & j=\frac{\eta v_{k}}{2}\\
            P_{\mathcal{U}^{n\times n}}(U_j) & \text{otherwise}									
        \end{array}\right.$$
        where $1\leq k\leq 3$.\label{thm:Qprojunitary1}	
        \item  For $C_{1}^{(\ell)}$ with $\ell \in \{1,2,3\}$,  $P_{C_1^{(\ell)}} (\vec{U}) = \mathcal{F}_{\eta}\chi_{\ell}^{-1} \mathcal{F}_{\eta}^{-1}P_{(\mathcal{U}^{4\times 4})^{Q_{\eta}^2}}\mathcal{F}_{\eta}\chi_{\ell} \mathcal{F}_{\eta}^{-1}\vec{U}.$ \label{thm:Qprojunitary2}
    \end{enumerate}
\end{proposition}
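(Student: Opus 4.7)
The plan is to reduce both parts to component-wise projections by exploiting the consistency structure of the Hilbert module $\mathcal{H}$. For part (a), I would first observe that since $\vec{U},\vec{V}\in\mathcal{H}$ both satisfy the consistency relations $W_{j+\eta v_k/2}=\sigma_k W_j$ and $W_{\eta v_3-j}=\tau W_j\tau$, the ensemble $\vec{V}$ is determined by its values on a fundamental domain for the combined action of $\sigma_1,\sigma_2,\sigma_3$ and $\tau$ on $Q_\eta^2$. Because $\sigma_k$ and $\tau$ are unitary block permutations of SV cells, they preserve the Frobenius norm, i.e.\ $\|\sigma_k X\|_F=\|X\|_F=\|\tau X\tau\|_F$. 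Consequently, $\|\vec{U}-\vec{V}\|^2_{\mathbb{R}}$ decomposes as a sum over this fundamental domain, each summand being a constant multiple of $\|U_j-V_j\|_F^2$ for a single representative $j$.

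Next, I would minimise each such summand separately. The constraints defining $C_1^{(0)}$ are independent across orbits: the constraint $V_0\in[1]\otimes\mathcal{U}^{3\times 3}$ affects only the orbit of the origin (which also contains $\eta v_k/2$ for $k\in\{1,2,3\}$), while the constraints $V_j\in\mathcal{U}^{4\times 4}$ for $j\in Q_{\eta/2}^2\setminus\{0\}$ each affect a distinct orbit. Thus Proposition~\ref{prop:projectors}\ref{prop:Qprojectors2} (for $[1]\otimes\mathcal{U}^{3\times 3}$) and Proposition~\ref{prop:projectors}\ref{prop:Qprojectors1} (for $\mathcal{U}^{4\times 4}$) produce the desired projections, and the remaining indices in each orbit are filled in by consistency. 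In particular, the forced relation $V_{\eta v_k/2}=\sigma_k V_0$ yields the case $V_{\eta v_k/2}\in\sigma_k P_{[1]\otimes\mathcal{U}^{3\times 3}}(U_0)$ of the stated formula, while for all other $j$ the orbit representative gives $V_j\in P_{\mathcal{U}^{4\times 4}}(U_j)$.

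For part (b), the key observation is that $T_\ell:=\mathcal{F}_\eta\chi_\ell\mathcal{F}_\eta^{-1}$ is an isometric automorphism of $\mathcal{H}$ with inverse $T_\ell^{-1}=\mathcal{F}_\eta\chi_\ell^{-1}\mathcal{F}_\eta^{-1}$. Isometry follows because $\chi_\ell$ multiplies each entry by a unit-modulus SV factor $[e^{\pi v_\ell\wedge k/\eta}]$ and hence preserves norms, while the $\eta^{-2}$ scaling in $\mathcal{F}_\eta^{-1}$ cancels against the corresponding factor in $\mathcal{F}_\eta$ under the conjugation $\mathcal{F}_\eta\chi_\ell\mathcal{F}_\eta^{-1}$. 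Preservation of $\mathcal{H}$ stems from the fact that the shifted samples $U\bigl((j+v_\ell/2)/\eta\bigr)$ of a consistent wavelet matrix themselves satisfy the same $\sigma$ and $\tau$ relations. By definition, $C_1^{(\ell)}$ consists of those $\vec{U}\in\mathcal{H}$ with $T_\ell\vec{U}\in(\mathcal{U}^{4\times 4})^{Q_\eta^2}$, so the standard identity $P_{T_\ell^{-1}(S)}=T_\ell^{-1}P_S T_\ell$ for projections under isometries, combined with the component-wise projector of Proposition~\ref{prop:projectors}\ref{prop:Qprojectors3}, gives the desired formula.

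The main obstacle is verifying that projecting each component of $T_\ell\vec{U}$ independently onto $\mathcal{U}^{4\times 4}$ preserves the consistency relations, so that the output remains in $\mathcal{H}$. This reduces to the equivariances $\sigma_k P_{\mathcal{U}^{4\times 4}}(V_j)=P_{\mathcal{U}^{4\times 4}}(\sigma_k V_j)$ and $\tau P_{\mathcal{U}^{4\times 4}}(V_j)\tau=P_{\mathcal{U}^{4\times 4}}(\tau V_j\tau)$, both of which follow from Proposition~\ref{prop:projectors}\ref{prop:Qprojectors1} and the unitarity of $\sigma_k,\tau$ (since left/right multiplication by unitaries permutes the sets of SV-SVDs in a way that carries $UV^{\ast}$ to $\sigma_k UV^{\ast}$ and $\tau UV^{\ast}\tau$, respectively).
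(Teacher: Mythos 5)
Your proof is correct and takes essentially the same route as the paper's, which simply invokes the component-wise projector results of Proposition~\ref{prop:projectors} for part (a) and conjugation by $\mathcal{F}_{\eta}\chi_{\ell}\mathcal{F}_{\eta}^{-1}$ followed by component-wise unitary projection for part (b). The difference is only one of detail: you explicitly supply the justifications the paper leaves implicit (the orbit decomposition of the norm on the consistency module, the isometry of $\mathcal{F}_{\eta}\chi_{\ell}\mathcal{F}_{\eta}^{-1}$, and the $\sigma_k$-, $\tau$-equivariance of the SVD-based projector), all of which are sound.
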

\begin{proof}
    Statement~\ref{thm:Qprojunitary1} follows from a combination of Proposition~\ref{prop:projectors}\ref{prop:Qprojectors2}--\ref{prop:projectors}\ref{prop:Qprojectors3}. Statement~\ref{thm:Qprojunitary2} also appeals to Proposition~\ref{prop:projectors}\ref{prop:Qprojectors3} in computing $P_{(\mathcal{U}^{4\times 4})^{Q_{\eta}^2}}(\mathcal{F}_{\eta}\chi_{\ell} \mathcal{F}_{\eta}^{-1}\vec{U})$. This is followed by an application of $\mathcal{F}_{\eta}\chi_{\ell}^{-1} \mathcal{F}_{\eta}^{-1}$ to re-express the resulting ensemble in terms of the standard ensemble.
\end{proof}

\subsubsection*{Projector onto $C_2$}

%We again draw inspiration from the computations done for a similar constraint set in the two-dimensional complex-valued wavelet feasibility problem. 

We consider the set $\mathcal{A} = \{\alpha=(\alpha_1,\alpha_2)\in \mathbb{Z}_+^2\, : \, 0<|\alpha| \leq \mu\}$ which contains a finite number of multi-indices. We also define two bijections 
\begin{equation}\label{eqn:bijectiondefinition}
    \nu:\{1,2,\ldots, \eta^2\} \to Q_{\eta}^2 \text{~~and~~} \rho: \{1,2,\ldots,|\mathcal{A}|\} \to \mathcal{A}
\end{equation}	
so that the sets
\begin{equation*}
    \{\nu(1), \nu(2), \ldots, \nu(\eta^2)\} \text{~~and~~} \{\rho(1), \rho(2), \ldots, \rho(|\mathcal{A}|)\}
\end{equation*}
are suitable enumerations of $Q_{\eta}^2$ and $\mathcal{A}$, respectively. In Theorem~\ref{prop:Qdiscretewaveletdesignmatrix}\ref{prop:Qdiscretewaveletdesignmatrix3}, the regularity condition on $\vec{U}$ has an equivalent formulation in terms of $\vec{A} = \mathcal{F}^{-1}\vec{U}$, namely,
\begin{equation}\label{eqn:regularityrecall}
    \sum_{k \in Q_{\eta}^2} [k_2^{\alpha_1}k_1^{\alpha_2}]\ostar A_k \in \mathsf{S}\otimes\mathsf{S}^{3\times 3} \, \mbox{ for } 0\leq|\alpha|\leq \mu 
\end{equation} where $k=(k_1,k_2)$ and $\alpha=(\alpha_1,\alpha_2)$. At this point, we view $\sum_{k \in Q_{\eta}^2} [k_2^{\alpha_1}k_1^{\alpha_2}]\ostar A_k$ as an $8\times 8$ matrix instead of the usual $4\times 4$ spinor-vector block matrix. We look at the appropriate entries of $\sum_{k \in Q_{\eta}^2} [k_2^{\alpha_1}k_1^{\alpha_2}]\ostar A_k$ that must be zero so that the regularity condition is satisfied. For a fixed $\alpha$,  condition \eqref{eqn:regularityrecall} is equivalent to
\begin{empheq}[left=\empheqlbrace]{align} 
    {\textstyle\sum_{n=1}^{\eta^2}} \big(\nu(n)_2^{\alpha_1}\nu(n)_1^{\alpha_2}\big) \big(A_{\nu(n)}[0,2j] + A_{\nu(n)}[0,2j+1] \big)&= 0 \label{eqn:QregularitycolumnsR0}\\
    {\textstyle	\sum_{n=1}^{\eta^2}} \big(\nu(n)_2^{\alpha_1}\nu(n)_1^{\alpha_2}\big) \big(A_{\nu(n)}[1,2j] + A_{\nu(n)}[1,2j+1] \big)&= 0 \label{eqn:QregularitycolumnsR1}\\
    {\textstyle	\sum_{n=1}^{\eta^2}} \big(\nu(n)_2^{\alpha_1}\nu(n)_1^{\alpha_2}\big) \big(A_{\nu(n)}[2j,0] + A_{\nu(n)}[2j,1] \big)&= 0 \label{eqn:QregularityrowsR0}\\
    {\textstyle	\sum_{n=1}^{\eta^2}} \big(\nu(n)_2^{\alpha_1}\nu(n)_1^{\alpha_2}\big) \big(A_{\nu(n)}[2j+1,0] + A_{\nu(n)}[2j+1,1] \big)&= 0 \label{eqn:QregularityrowsR1}
\end{empheq}
for each $1\leq j \leq 3$. However, we know from Theorem~\ref{prop:Qdiscretewaveletdesignmatrix}\ref{prop:Qdiscretewaveletdesignmatrix5} that $A_{\nu(n)} = \tau A_{\nu(n)}\tau$ for each $n \in\{1,2,\ldots, \eta^2\}$. This implies that $A_{\nu(n)}[1,2j] = A_{\nu(n)}[0,2j+1]$ and $A_{\nu(n)}[1,2j+1] = A_{\nu(n)}[0,2j]$ for each $1\leq j \leq 3$, and so \eqref{eqn:QregularitycolumnsR0} is equivalent to \eqref{eqn:QregularitycolumnsR1}. Similarly, $A_{\nu(n)}[2j+1,0]=A_{\nu(n)}[2j,1]$ and $A_{\nu(n)}[2j+1,1]=A_{\nu(n)}[2j,0]$ for each $1\leq j \leq 3$, and so \eqref{eqn:QregularityrowsR0} is equivalent to \eqref{eqn:QregularityrowsR1}. Thus, we only need to impose \eqref{eqn:QregularitycolumnsR0} and \eqref{eqn:QregularityrowsR0} to satisfy the regularity condition.

Equation \eqref{eqn:QregularitycolumnsR0} requires that the vector $(\nu(n)_2^{\alpha_1} \nu(n)_1^{\alpha_2})_{n=1}^{\eta^2}$ be orthogonal to the vector $(A_{\nu(n)}[0,2j] + A_{\nu(n)}[0,2j+1] )_{n=1}^{\eta^2}$. Since we want this condition to hold for each $\alpha$ satisfying $1\leq |\alpha| \leq \mu$, we form the matrix $R \in \mathbb{C}^{|\mathcal{A}| \times \eta^2}$ with entries 
\begin{equation}\label{eqn:regularitymatrixR}
    R_{mn} = \nu(n)_2^{\rho(m)_1}\nu(n)_1^{\rho(m)_2} \mbox{ where } 1 \leq m \leq |\mathcal{A}|,\, 1 \leq n \leq \eta^2,
\end{equation} and for each $1\leq j \leq 3$, we require $(A_{\nu(n)}[0,2j] + A_{\nu(n)}[0,2j+1])_{n=1}^{\eta^2}$  to be in the null space of $R$.

We further convert \eqref{eqn:QregularityrowsR0} to a condition in terms of $(A_{\nu(n)}[0,0] + A_{\nu(n)}[0,1])_{n=1}^{\eta^2}$. Note that $m_0(\xi) = \sum_{n=1}^{\eta^2}e^{2\pi \xi \wedge \nu(n)} (A_{\nu(n)}[0,0] + A_{\nu(n)}[0,1])$.  For a fixed $\alpha=(\alpha_1,\alpha_2)$, the regularity condition described by \eqref{eqn:QregularityrowsR0} requires \begin{align*}
    \partial^{\alpha}m_0(\xi)\bigr|_{\xi = \frac{v_{\ell}}{2}} &\! =\!(2\pi e_{12})^{|\alpha|}(-1)^{\alpha_2}\!\sum_{n=1}^{\eta^2}\nu(n)_1^{\alpha_2}\nu(n)_2^{\alpha_1}e^{\pi v_{\ell} \wedge \nu(n)}(A_{\nu(n)}[0,0] + A_{\nu(n)}[0,1]) = 0\\
    &\iff \sum_{n=1}^{\eta^2}\nu(n)_1^{\alpha_2}\nu(n)_2^{\alpha_1}e^{\pi v_{\ell} \wedge \nu(n)}(A_{\nu(n)}[0,0] + A_{\nu(n)}[0,1])\! =\! 0
\end{align*}
for each $1\leq \ell \leq 3$. Thus, \eqref{eqn:QregularityrowsR0} and \eqref{eqn:QregularityrowsR1} are equivalent to
\begin{equation*}
    \left\{\begin{array}{rl}
        {\textstyle	\sum_{n=1}^{\eta^2}}(-1)^{\nu(n)_2}\nu(n)_1^{\alpha_2}\nu(n)_2^{\alpha_1}(A_{\nu(n)}[0,0] + A_{\nu(n)}[0,1]) &= \,0\\
        {\textstyle	\sum_{n=1}^{\eta^2}}(-1)^{\nu(n)_1}\nu(n)_1^{\alpha_2}\nu(n)_2^{\alpha_1}(A_{\nu(n)}[0,0] + A_{\nu(n)}[0,1]) &= \,0\\
        {\textstyle	\sum_{n=1}^{\eta^2}}(-1)^{\nu(n)_1+\nu(n)_2}\nu(n)_1^{\alpha_2}\nu(n)_2^{\alpha_1}(A_{\nu(n)}[0,0] + A_{\nu(n)}[0,1]) &=\,0.
    \end{array}\right.
\end{equation*}
Define $S^{(1)},S^{(2)},S^{(3)} \in  \mathbb{C}^{|\mathcal{A}| \times \eta^2}$ with entries given by
\begin{align*}
    S^{(1)}_{mn} &= (-1)^{\nu(n)_2}\nu(n)_1^{\rho(m)_2}\nu(n)_2^{\rho(m)_1},\\
    S^{(2)}_{mn} &= (-1)^{\nu(n)_1}\nu(n)_1^{\rho(m)_2}\nu(n)_2^{\rho(m)_1},\\ 
    \text{and~} S^{(3)}_{mn} &= (-1)^{\nu(n)_1+\nu(n)_2}\nu(n)_1^{\rho(m)_2}\nu(n)_2^{\rho(m)_1},	
\end{align*}
where $1 \leq m \leq |\mathcal{A}|$ and $1 \leq n \leq \eta^2$ and form the matrix $S\in\mathbb{C}^{3|\mathcal{A}| \times \eta^2}$ by \begin{equation}\label{eqn:regularitymatrixS}
    S := \begin{bmatrix}S^{(1)}\\ S^{(2)}\\S^{(3)}\end{bmatrix}.
\end{equation} The requirement described in $\eqref{eqn:QregularityrowsR0}$ is equivalent to having $(A_{\nu(n)}[0,0])_{n=1}^{\eta^2}$ in the null space of $S$.

Therefore, satisfying the regularity condition entails projecting appropriate vectors onto the null space of the real matrices $R$ and $S$ as defined in \eqref{eqn:regularitymatrixR} and \eqref{eqn:regularitymatrixS}, respectively. Viewing $\mathbb{R}_2^{{\eta}^2}$ as a Hilbert space with inner product as given in \eqref{eqn:QrealinnerproductV}, we appeal to  Proposition~\ref{prop:projectors}\ref{prop:projectors1} in carrying out the projections onto  $\Null R$ and $\Null S$. That is, the projector $\tilde{R}$ onto $\Null R$ and the projector $\tilde{S}$ onto $\Null S$ are given by
\begin{align}
    \tilde{R} &= I_{\eta^2} - R^{\ast}(RR^{\ast})^{-1}R\label{eqn:QRtilde}\\ 
    \mbox{and~}\tilde{S} &= I_{\eta^2} - S^{\ast}(SS^{\ast})^{-1}S,\label{eqn:QStilde}
\end{align}
respectively. With $\tilde{R}$ and $\tilde{S}$, we can now describe the projector onto $C_2$ in the following proposition. The proof is omitted for brevity.

\begin{proposition}\label{thm:Qprojregularity}
    Let $\eta \geq 4$ be even, $\mu \in \mathbb{Z}_+$, $C_2$ be as given in \eqref{prob:Qwaveletfeasibilityproblem3}, and define $\mathcal{A} := \{\alpha=(\alpha_1,\alpha_2)\in \mathbb{Z}_+^2\, : \, 0<|\alpha| \leq \mu\}$. Suppose further that $\nu$ and $\rho$ are bijections defined as in \eqref{eqn:bijectiondefinition}, and $\tilde{R}$ and $\tilde{S}$ are the projectors defined as in \eqref{eqn:QRtilde} and in \eqref{eqn:QStilde}, respectively. Let $\vec{U} \in (\mathsf{S}^{4\times 4})_{\sigma\tau}^{Q_{\eta}^2}$ and $\vec{A}=(A_k)_{k\in Q_{\eta}^2} = \mathcal{F}_{\eta}^{-1} \vec{U}$. Define $\vec{A}^+ =(A_{\nu(n)}^+)_{n=1}^{\eta^2}$ by
    \begin{align*}
        (A_{\nu(n)}^+[0,0])_{n=1}^{\eta^2} &= \big(\tilde{S}(A_{\nu(n)}[0,0] + A_{\nu(n)}[0,1])_{n=1}^{\eta^2}\big)_s\\
        (A_{\nu(n)}^+[0,1])_{n=1}^{\eta^2} &= \big(\tilde{S}(A_{\nu(n)}[0,0] + A_{\nu(n)}[0,1])_{n=1}^{\eta^2}\big)_v\\
        (A_{\nu(n)}^+[0,2j])_{n=1}^{\eta^2} &= \big(\tilde{R}(A_{\nu(n)}[0,2j] + A_{\nu(n)}[0,2j+1])_{n=1}^{\eta^2}\big)_s\\
        (A_{\nu(n)}^+[0,2j+1])_{n=1}^{\eta^2} &= \big(\tilde{R}(A_{\nu(n)}[0,2j] + A_{\nu(n)}[0,2j+1])_{n=1}^{\eta^2}\big)_v
    \end{align*} for $1\leq j \leq 3$, and the other rows are given by 
    \begin{align*}
        A_{\nu(n)}^+[(2,3),(0,1,\ldots,7)] &=(-1)^{\nu(n)_2} A_{\nu(n)}^+[(0,1),(0,1,\ldots,7)]\\
        A_{\nu(n)}^+[(4,5,6,7),(0,1,\ldots,7)] &=(-1)^{\nu(n)_1} A_{\nu(n)}^+[(0,1,2,3),(0,1,\ldots,7)]
    \end{align*}
    for all $n\in\{1,2,\ldots \eta^2\}$. Then $P_{C_2}(\vec{U}) = \mathcal{F}_{\eta}\vec{A}^+$. 
\end{proposition}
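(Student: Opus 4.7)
The plan is to transfer the projection problem from sample space to coefficient space via $\mathcal{F}_\eta^{-1}$, exploit the $\sigma$- and $\tau$-symmetries of the Hilbert module to reduce the free data in each coefficient matrix to a single SV-row, and then observe that the regularity condition decouples into four independent null-space projections handled by Proposition~\ref{prop:projectors}\ref{prop:projectors1}. First I would verify that $\mathcal{F}_\eta$ is a scaled isometry of $(\mathsf{S}^{4\times 4})^{Q_\eta^2}$ under the real inner product, namely that $\|\mathcal{F}_\eta\vec{B}\|_\mathbb{R}^2 = \eta^2\|\vec{B}\|_\mathbb{R}^2$; this follows from the orthogonality of the characters $e^{2\pi j\wedge k/\eta}$ already used in the inversion formula for $\mathcal{F}_\eta$ together with the unitarity of the diagonal SV matrices $[e^{2\pi j\wedge k/\eta}]$. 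Consequently the projection onto $C_2$ corresponds to a projection in coefficient space, and it suffices to show that $\vec{A}^+$ is the projection of $\vec{A}=\mathcal{F}_\eta^{-1}\vec{U}$ onto $\mathcal{F}_\eta^{-1}(C_2)$.

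Because $\vec{U}\in(\mathsf{S}^{4\times 4})_{\sigma\tau}^{Q_\eta^2}$, Theorem~\ref{prop:Qdiscretewaveletdesignmatrix}\ref{prop:Qdiscretewaveletdesignmatrix4b} and \ref{prop:Qdiscretewaveletdesignmatrix5b} yield $\sigma_1 A_k = (-1)^{k_2}A_k$, $\sigma_2 A_k = (-1)^{k_1}A_k$, and $A_k = \tau A_k\tau$ for every $k\in Q_\eta^2$. These relations force rows $2,3$ and rows $4,\dots,7$ of each $A_k$ to be signed copies of rows $0,1$ and rows $0,1,2,3$ respectively, while the $\tau$-symmetry identifies, within each SV-block of the first SV-row, the even-column entry with a spinor and the odd-column entry with a vector. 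The effective degrees of freedom are therefore the four quaternion-valued vectors $(A_k[0,2j]+A_k[0,2j+1])_k$ for $j=0,1,2,3$, and any element of $\mathcal{F}_\eta^{-1}(C_2)\cap\mathcal{F}_\eta^{-1}((\mathsf{S}^{4\times 4})_{\sigma\tau}^{Q_\eta^2})$ must reconstruct its remaining entries from these via the signed duplication rules that appear at the end of the proposition.

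With this reduction, the regularity conditions \eqref{eqn:QregularitycolumnsR0}--\eqref{eqn:QregularityrowsR1} collapse under $\tau$-symmetry to one constraint for $j=0$ encoded by the matrix $S$ and one constraint for each $j\in\{1,2,3\}$ encoded by $R$. Since the four conditions involve pairwise disjoint coordinates, the corresponding subspaces of $\mathbb{R}_2^{\eta^2}$ are orthogonal in the real inner product and the projection splits as a direct sum of four independent null-space projections. Each of these is carried out by Proposition~\ref{prop:projectors}\ref{prop:projectors1}, which produces the real matrices $\tilde{R}$ and $\tilde{S}$ from \eqref{eqn:QRtilde}--\eqref{eqn:QStilde}; because these matrices are real, they act componentwise on the four $\mathbb{R}$-components $\epsilon\in\{0,1,2,3\}$ of a quaternion-valued vector, and in particular separately produce the spinor and vector parts that are assigned to $A^+_{\nu(n)}[0,0],A^+_{\nu(n)}[0,1]$ and $A^+_{\nu(n)}[0,2j],A^+_{\nu(n)}[0,2j+1]$ in the statement.

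The main obstacle is the careful bookkeeping of the $\tau$-identifications: confirming that \eqref{eqn:QregularitycolumnsR1} is indeed the same condition as \eqref{eqn:QregularitycolumnsR0} and likewise for \eqref{eqn:QregularityrowsR1}, that the four coordinate groups are genuinely disjoint, and that the signed duplication of rows $2,\dots,7$ preserves all three module symmetries so that $\mathcal{F}_\eta\vec{A}^+$ lands back in $(\mathsf{S}^{4\times 4})_{\sigma\tau}^{Q_\eta^2}$. Once this is in hand, feasibility of $\vec{A}^+$ follows because its top SV-row sits in the appropriate null spaces of $R$ and $S$ and the lower rows honour the $\sigma$-symmetries; optimality follows because $\vec{A}-\vec{A}^+$ is, on each decoupled coordinate group, the range-space component produced by $I-\tilde{M}$, hence orthogonal to $\mathcal{F}_\eta^{-1}(C_2)$ in the real inner product.
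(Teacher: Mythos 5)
Your proposal is correct and follows essentially the same route as the paper, whose formal proof is omitted but whose preceding derivation carries out exactly your reduction: transfer to coefficient space via $\mathcal{F}_{\eta}^{-1}$, use the $\sigma$- and $\tau$-consistency relations to collapse the regularity constraints onto the first SV-row and to identify \eqref{eqn:QregularitycolumnsR0} with \eqref{eqn:QregularitycolumnsR1} and \eqref{eqn:QregularityrowsR0} with \eqref{eqn:QregularityrowsR1}, and then apply the null-space projectors $\tilde{S}$ and $\tilde{R}$ of Proposition~\ref{prop:projectors}\ref{prop:projectors1} to the disjoint column groups $(0,1)$ and $(2j,2j+1)$. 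Your explicit verification that $\mathcal{F}_{\eta}$ is a scaled isometry for the real inner product (so that projections commute with it) is a point the paper leaves implicit, and is a worthwhile addition.
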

\begin{proof} Refer to the preceding discussion.  
\end{proof}
Notice that the first two rows of all matrices in $\vec{A}^+$ are obtained by applying the projectors $\tilde{R}$ and $\tilde{S}$ to the vectors formed from (the appropriate columns of) the first and second rows of all matrices in $\vec{A}$. The other rows of all matrices in $\vec{A}^+$ are then filled out according to the consistency conditions in Proposition~\ref{prop:Qdiscretewaveletdesignmatrix}\ref{prop:Qdiscretewaveletdesignmatrix4}\ref{prop:Qdiscretewaveletdesignmatrix4b} for coefficient ensembles.

\subsubsection*{Projector onto $C_3$}

We again draw inspiration from the complex-valued case in describing the projector onto the point symmetry constraint set $C_3$. We start by noting that Proposition~\ref{prop:projectors}\ref{prop:projectors_symm} extends to the quaternionic setting. Notice also that $C_3$ imposes a ``perfect'' symmetry condition.

\begin{proposition}\label{prop:Qprojectors_symm}
    Let $0 \neq \vec{a} \in \mathbb{R}_2^d$, $b\in \mathbb{R}_2$, $\epsilon\geq 0$ and $C:=\{\vec{z} \in \mathbb{R}_2^d \,:\, |\langle \vec{a},\vec{z} \rangle_{\mathbb{R}_2} + b | \leq \epsilon\}$. If $\beta(\vec{z}) = \frac{ \epsilon (\langle \vec{a},\vec{z} \rangle_{\mathbb{R}_2} + b)}{|\langle \vec{a},\vec{z} \rangle_{\mathbb{R}_2} + b|}$, then $P_C(z) =\vec{p}$ where
    $$\vec{p}= \begin{cases}\vec{z} & \text{if~} |\langle \vec{a},\vec{z} \rangle_{\mathbb{R}_2} + b| \leq \epsilon\\ \vec{z} + \frac{\beta(\vec{z})-\langle \vec{a},\vec{z} \rangle_{\mathbb{R}_2}-b}{\|\vec{a}\|^2}\vec{a} & \text{if~} |\langle \vec{a},\vec{z} \rangle_{\mathbb{R}_2}  + b|> \epsilon.\end{cases}$$
\end{proposition}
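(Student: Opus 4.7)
The plan is to mimic the complex-valued argument from \cite[Proposition~1.8]{dizonthesis} (cited in Proposition~\ref{prop:projectors}\ref{prop:projectors_symm}), adapting each step to the noncommutativity of $\mathbb{R}_2$ and to the real-valued inner product $\langle\cdot,\cdot\rangle_{\mathbb{R}}$ on $\mathbb{R}_2^d$ introduced in \eqref{eqn:QrealinnerproductV}. Throughout I view $\mathbb{R}_2^d$ as a real Hilbert space, which is the setting in which the projection onto a closed convex set is characterised by the variational inequality.

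First I would verify that $C$ is closed and convex in $(\mathbb{R}_2^d,\langle\cdot,\cdot\rangle_{\mathbb{R}})$. Since the map $\vec{z}\mapsto \langle \vec{a},\vec{z}\rangle_{\mathbb{R}_2}+b$ is $\mathbb{R}$-affine and the quaternion modulus $|\cdot|$ is a norm on $\mathbb{R}_2\cong\mathbb{R}^4$ (hence convex and continuous), $C$ is the sublevel set of a continuous convex function and is therefore closed and convex. In the trivial case $|\langle\vec{a},\vec{z}\rangle_{\mathbb{R}_2}+b|\leq \epsilon$ we simply have $\vec{z}\in C$, and so $P_C(\vec{z})=\vec{z}$.

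The remaining case $|\langle\vec{a},\vec{z}\rangle_{\mathbb{R}_2}+b|>\epsilon$ is where the real content lies. I would first confirm that the proposed point $\vec{p}$ belongs to $C$. Writing $\vec{p}=\vec{z}+\vec{a}\lambda$ with $\lambda=\|\vec{a}\|^{-2}(\beta(\vec{z})-\langle\vec{a},\vec{z}\rangle_{\mathbb{R}_2}-b)$, and using the right-module identity $\langle \vec{a},\vec{a}\lambda\rangle_{\mathbb{R}_2}=\|\vec{a}\|^2\lambda$, a short calculation shows $\langle\vec{a},\vec{p}\rangle_{\mathbb{R}_2}+b=\beta(\vec{z})$. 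Since $|\beta(\vec{z})|=\epsilon$ by construction, $\vec{p}$ sits on the boundary of $C$. Here the only non-routine point is being careful that the scalar $\lambda$ acts on $\vec{a}$ on the right, consistently with the inner product identities $\langle u,v a\rangle_{\mathbb{R}_2}=\langle u,v\rangle_{\mathbb{R}_2}a$ and $\langle u a,v\rangle_{\mathbb{R}_2}=\overline{a}\langle u,v\rangle_{\mathbb{R}_2}$.

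Next I would verify the defining variational inequality
\[
\langle \vec{z}-\vec{p},\vec{y}-\vec{p}\rangle_{\mathbb{R}}\leq 0\quad\text{for every }\vec{y}\in C,
\]
which uniquely characterises $\vec{p}=P_C(\vec{z})$ by standard Hilbert space convex analysis. Setting $w:=\langle\vec{a},\vec{z}\rangle_{\mathbb{R}_2}+b$ and $q:=\langle\vec{a},\vec{y}\rangle_{\mathbb{R}_2}+b$, an expansion gives
\[
\langle \vec{z}-\vec{p},\vec{y}-\vec{p}\rangle_{\mathbb{R}_2}=\frac{|w|-\epsilon}{\|\vec{a}\|^2|w|}\,(\overline{w}q-\epsilon|w|).
\]
Taking real parts via the identity $\langle u,v\rangle_{\mathbb{R}}=[\langle u,v\rangle_{\mathbb{R}_2}]_0$, the prefactor $(|w|-\epsilon)/(\|\vec{a}\|^2|w|)$ is positive in the present case, and the quaternionic Cauchy--Schwarz bound $[\overline{w}q]_0\leq|\overline{w}q|=|w||q|\leq\epsilon|w|$ (valid because $|\vec{y}\in C$ forces $|q|\leq\epsilon$) delivers the required inequality. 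The main obstacle I anticipate is precisely this algebraic manipulation: because $\mathbb{R}_2$ is noncommutative, one must carry $\overline{\lambda}$ to the left of $\langle\vec{a},\vec{y}-\vec{p}\rangle_{\mathbb{R}_2}$ via $\langle \vec{a}\lambda,\cdot\rangle_{\mathbb{R}_2}=\overline{\lambda}\langle\vec{a},\cdot\rangle_{\mathbb{R}_2}$, and then keep track of the order of factors when extracting the real part through $[\cdot]_0$. Once this bookkeeping is done correctly, the bound $[\overline{w}q]_0\leq|w||q|$ (which follows from viewing $w,q$ as vectors in $\mathbb{R}^4$ and noting that $\overline{w}q$ has real part equal to the usual dot product) closes the argument and establishes $\vec{p}=P_C(\vec{z})$.
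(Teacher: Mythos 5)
Your proof is correct and follows essentially the same route as the paper, whose proof simply defers to the complex-valued argument of Proposition~\ref{prop:projectors}\ref{prop:projectors_symm} carried out with the real inner product $\langle\cdot,\cdot\rangle_{\mathbb{R}}=[\langle\cdot,\cdot\rangle_{\mathbb{R}_2}]_0$. Your handling of the noncommutativity---reading the correction term as the right scalar multiple $\vec{a}\lambda$ so that $\langle\vec{a},\vec{a}\lambda\rangle_{\mathbb{R}_2}=\|\vec{a}\|^2\lambda$, and closing the variational inequality via $[\overline{w}q]_0\leq|w||q|\leq\epsilon|w|$---supplies exactly the bookkeeping the paper leaves implicit.
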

\begin{proof} This follows the same argument as in the proof of Proposition~\ref{prop:projectors}\ref{prop:projectors_symm} using the appropriate real inner product in \eqref{eqn:QrealinnerproductV} and polarisation identity in \eqref{eqn:Qpolarization}.
\end{proof}

Now, for each $j \in Q_{\eta}^2$, define the quaternion vector $\vec{g}^{(j)}=(1,-e^{-2\pi j\wedge P/\eta}) \in\mathbb{R}_2^2$ and the set $G^{(j)} \subset \mathbb{R}_2^2$ by
\begin{equation}\label{eqn:G_C3}
    G^{(j)} := \{\vec{z}=(z_1,z_2)\in \mathbb{R}_2^2 \, : \, |\langle \vec{g}^{(j)},\vec{z}\rangle_{\mathbb{R}_2}| = 0\}.
\end{equation}
The projection onto $G^{(j)}$ is carried out by following Proposition~\ref{prop:Qprojectors_symm} (with $b=\epsilon=0$). The projector onto $C_3$ is given in the next proposition.

\begin{proposition}
    Let $\mathcal{H} = (\mathsf{S}^{4\times 4})_{\sigma \tau}^{Q_{\eta}^2}$, $C_3$ be defined as in \eqref{prob:Qwaveletfeasibilityproblem4}, and $G^{(j)}$ be defined as in \eqref{eqn:G_C3}. If $\vec{U}=(U_j)_{j\in Q_{\eta}^2}$, then $P_{C_3}(\vec{U}) = \vec{V} = (V_j)_{j \in Q_{\eta}^2}$ where
    \begin{align*}
        (V_j[0,0],V_{\eta v-j}[0,0]) &\!=\! (P_{G^{(j)}}((U_j[0,0]+U_j[0,1],U_{\eta v_3-j}[0,0]+U_{\eta v_3 -j}[0,1])))_s \, \forall j\in Q_{\eta}^2,\\
        (V_j[0,1],V_{\eta v-j}[0,1]) &\!=\! (P_{G^{(j)}}((U_j[0,0]+U_j[0,1],U_{\eta v_3-j}[0,0]+U_{\eta v_3 -j}[0,1])))_v \, \forall j\in Q_{\eta}^2,\\
        V_j[(0),(3,4,5,6,7,8)] &\!=\! U_j[(0),(3,4,5,6,7,8)] \, \forall j\in Q_{\eta}^2,
    \end{align*}
    and the other rows of $V_j$ are filled out (according to the consistency conditions) as
    $$V_j[(2k,2k+1), (0,1,\ldots,7)] = V_{k+v_j/2}[(0,1),(0,1,\ldots,7)]$$
    for all $j\in Q_{\eta}^2$, $k\in\{1,2,3\}$.
\end{proposition}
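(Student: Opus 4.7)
The plan is to exploit the decoupling structure of $C_3$: although $C_3$ is stated as a global constraint on the ensemble, it reduces to independent couplings of pairs $\{U_j, U_{\eta v_3 - j}\}$ for $j \in Q_\eta^2$, and within each pair it constrains only the sums $U_j[0,0]+U_j[0,1]$ and $U_{\eta v_3 - j}[0,0]+U_{\eta v_3 - j}[0,1]$. Because the subspaces $\Lambda_0\oplus\Lambda_2$ and $\Lambda_1$ are orthogonal in $\mathbb{R}_2$, and the Frobenius inner product on $\mathsf{S}^{4\times 4}$ decomposes entry-wise, the projection problem factors into these small independent pairs. All other entries of $\vec{U}$ are unconstrained by $C_3$ and will remain fixed under the projection.

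First I would fix a pair $\{j, \eta v_3 - j\}$ and write $c := U_j[0,0]+U_j[0,1]$, $d := U_{\eta v_3 - j}[0,0]+U_{\eta v_3 - j}[0,1]$. The restriction of $C_3$ to this pair is precisely the real-linear set $G^{(j)}\subset \mathbb{R}_2^2$, whose projector is given by Proposition~\ref{prop:Qprojectors_symm} with $b=\epsilon=0$. I would apply this proposition to $(c,d)$ to produce a new pair $(\tilde c,\tilde d)$. For any prescribed target $\tilde c$, the minimum of $\|v_s-U_j[0,0]\|^2+\|v_v-U_j[0,1]\|^2$ over $(v_s,v_v)\in(\Lambda_0\oplus\Lambda_2)\times\Lambda_1$ subject to $v_s+v_v=\tilde c$ is uniquely attained at $v_s=(\tilde c)_s$, $v_v=(\tilde c)_v$, since the orthogonality of spinor and vector subspaces collapses the sum of two squared distances to $\|\tilde c - c\|^2$. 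The analogous splitting of $\tilde d$ then furnishes $V_{\eta v_3 - j}[0,0]$ and $V_{\eta v_3 - j}[0,1]$, which is exactly the content of the stated formula.

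The rest of the statement is bookkeeping to remain inside the Hilbert module $\mathcal{H}$. Since the entries $V_j[0,k]$ for the remaining column indices are unconstrained by $C_3$, the closest-point choice leaves them equal to the corresponding entries of $U_j$. The lower rows of $V_j$ are then forced by the consistency conditions $V_{j+\eta v_i/2}=\sigma_i V_j$ and $V_{\eta v_3 - j}=\tau V_j\tau$ that define $\mathcal{H}$, which explains the ``fill-in'' formula at the end of the statement. A short computation analogous to those in the proof of Theorem~\ref{prop:Qdiscretewaveletdesignmatrix} confirms that the resulting ensemble lies in $C_3\cap\mathcal{H}$.

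The main obstacle I anticipate is the careful accounting of which entries of which ensemble matrices are genuinely independent once the module relations and the symmetry coupling are combined. In particular, the relation $V_{\eta v_3 - j}=\tau V_j\tau$ already links $V_{\eta v_3 - j}[0,0]$ with $V_j[1,1]$ and $V_{\eta v_3 - j}[0,1]$ with $V_j[1,0]$, so one must verify that the projection onto $G^{(j)}$ is compatible with this preexisting link --- i.e., that the spinor/vector decomposition of $\tilde d$ produced by $P_{G^{(j)}}$ coincides with the $V_j[1,1]$, $V_j[1,0]$ values dictated by the SV-matrix structure of $[m_0(-j/\eta)]$. Once this compatibility is checked, the proof assembles from the orthogonal decomposition argument and Proposition~\ref{prop:Qprojectors_symm}, with no further heavy computation.
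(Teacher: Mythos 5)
Your argument is correct and is exactly the route the paper intends (the paper omits its own proof ``for brevity,'' but the surrounding text sets up precisely this machinery): decouple $C_3$ into independent pairs $\{j,\eta v_3-j\}$ acting only on the sums $U_j[0,0]+U_j[0,1]$, use the orthogonality of $\Lambda_0\oplus\Lambda_2$ and $\Lambda_1$ to identify the ensemble distance with the distance of those sums, project the pair onto $G^{(j)}$ via Proposition~\ref{prop:Qprojectors_symm} with $b=\epsilon=0$, split the result back into spinor and vector parts, and fill in the remaining rows by the consistency relations defining $\mathcal{H}$. The compatibility check you flag at the end does go through, since the $\tau$-relation pairs $j$ with $\eta v_3-j$ in the same way $G^{(j)}$ does and the constraints for $j$ and $\eta v_3-j$ are equivalent, so the fill-in is forced and consistent.
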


\section{Douglas--Rachford Algorithm}\label{app2}

\begin{algorithm}[H]\label{alg:DR}
    \SetAlgoLined
    \SetKwInOut{Input}{input}
    \SetKwInOut{Output}{output}
    \Input{$x_0 \in \mathcal{H}$}
    Compute $p_0 \in P_D(x_0)$\;
    \While{stopping criteria not satisfied}{
        Compute $x_{k+1}$ such that $x_{k+1} \in x_k + P_C(2p_k -x_k) - p_k$\;
        Compute $p_{k+1}$ such that $p_{k+1} \in P_D(x_{k+1})$\;
        Update $x_{k} = x_{k+1}$\;
    }
    \Output{$p_k$}
    \caption{DR}
\end{algorithm}

{\small

}

%% else use the following coding to input the bibitems directly in the
%% TeX file.

%% Refer following link for more details about bibliography and citations.
%% https://en.wikibooks.org/wiki/LaTeX/Bibliography_Management

%\begin{thebibliography}{00}
%
%%% For authoryear reference style
%%% \bibitem[Author(year)]{label}
%%% Text of bibliographic item
%
%\bibitem[Lamport(1994)]{lamport94}
%  Leslie Lamport,
%  \textit{\LaTeX: a document preparation system},
%  Addison Wesley, Massachusetts,
%  2nd edition,
%  1994.
%
%\end{thebibliography}

\end{document}